\documentclass[11pt,oneside, a4paper]{amsart}

\usepackage[english]{babel} 
\usepackage[T1]{fontenc} 
\usepackage{amsmath}
\usepackage{amssymb} 
\usepackage{float}
\usepackage{amsfonts}
\usepackage{mathtools}
\usepackage[utf8]{inputenc}
\usepackage[mathcal]{eucal} 
\usepackage{enumerate}
\usepackage{amsthm} 
\usepackage{hyperref}
\usepackage[totalwidth=14cm,totalheight=20cm]{geometry}
\usepackage{multicol}


\newtheorem{defi}{Definition}[section] 
\newtheorem{teo}[defi]{Theorem}
 
\newtheorem{lemma}[defi]{Lemma}
\newtheorem{prop}[defi]{Proposition}
\newtheorem{oss}[defi]{Remark}

\newcommand{\Pp}{\mathbb{P}}
\newcommand{\R}{\mathbb{R}}

\newcommand{\h}{\mathbb{H}}
\newcommand{\N}{\mathbb{N}} 
\newcommand{\K}{\kappa}

\newcommand{\grad}{\mathrm{grad}}
\newcommand{\trace}{\mathrm{trace}}
\newcommand{\dive}{\mathrm{div}}
\newcommand{\Hopf}{\mathrm{Hopf}}

\DeclareMathAlphabet{\mathpzc}{OT1}{pzc}{m}{it}

\title[CMC foliation of domains of dependence]{Constant mean curvature foliation \\ of domains of dependence in $AdS_{3}$} 
\author{Andrea Tamburelli}
\date{\today}
\thanks{}

\begin{document}

\begin{abstract}  
We prove that, given an acausal curve $\Gamma$ in the boundary at infinity of $AdS_{3}$ which is the graph of a quasi-symmetric homeomorphism $\phi$, there exists a unique foliation of its domain of dependence $D(\Gamma)$ by constant mean curvature surfaces with bounded second fundamental form. Moreover, these surfaces provide a family of quasi-conformal extensions of $\phi$. This answers Question 8.3 in \cite{questions}. 
\end{abstract} 

\maketitle

\setcounter{tocdepth}{1}
\tableofcontents

\section*{Introduction}\label{Introduction}
The study of Anti-de Sitter geometry has grown in interest since the pioneering work of Mess, who pointed out many analogies betweeen hyperbolic geometry and many connections to Teichm\"uller theory \cite{Mess}. \\
\indent More recently, with the work of Bonsante and Schlenker \cite{maxsurface} and of Bonsante and Seppi \cite{BonSep}, Anti-de Sitter geometry turned out to be a useful tool to construct quasi-conformal extensions of quasi-symmetric homeomorphisms of the unit disc. Indeed, the graph of a quasi-symmetric map $\phi: S^{1} \rightarrow S^{1}$ describes a curve (called quasi-circle) $c_{\phi}$ on the boundary at infinity of the $3$-dimensional Anti-de Sitter space $AdS_{3}$. Bonsante and Schlenker proved that a smooth surface $S$ (satisfying some technical conditions) with asymptotic boundary $c_{\phi}$ defines a quasi-conformal extension of $\phi$. Moreover, some remarkable properties of the quasi-conformal extension can be deduced from the geometry of the surface itself: for example, when the surface $S$ is maximal (i.e. it has vanishing mean curvature) the quasi-conformal extension induced by $S$ is minimal Lagrangian (\cite{maxsurface}); when $S$ is a smooth convex $\K$-surface, the corresponding quasi-conformal extension is a landslide (\cite{BonSep}); when the surface $S$ is the past-convex boundary of the convex-hull of $c_{\phi}$, the quasi-symmetric homeomorphism $\phi$ is exactly the boundary map of the earthquake $E^{2\lambda}: \h^{2} \rightarrow \h^{2}$, where $\lambda$ is the pleating locus of $S$ (\cite{Mess}).\\
\indent In this paper we study quasi-conformal extensions induced by constant mean curvature surfaces (in brief $H$-surfaces). The first problem we address is the existence of an $H$-surface with a given quasi-circle $\Gamma$ as asymptotic boundary, thus generalizing the work of Bonsante and Schlenker (\cite{maxsurface}) for maximal surfaces. We will prove that for each $H \in \R$, there exists an $H$-surface with asymptotic boundary $\Gamma$ and bounded principal curvatures. Although the technical part of the proof is based on the same apriori estimates as in \cite{maxsurface}, the starting point for the construction of this $H$-surface is different, thus obtaining a somehow new proof also in the case when $H=0$. Namely, we construct this $H$-surface $S_{H}$ as a limit of $H$-surfaces $(S_{H})_{n}$, with asymptotic boundary $\Gamma_{n}$, with the property that $\Gamma_{n}$ is the graph of a quasi-symmetric homeomorphism conjugating two cocompact Fuchsian groups and $\Gamma_{n}$ converges to $\Gamma$ in the Hausdorff topology. The existence of this approximating sequence $(S_{H})_{n}$ is a consequence of some results in \cite{CMCfoliation} and \cite{BonSep}.  \\

Moreover, extending the results of \cite{CMCfoliation}, we prove the following: \\

{\bf Theorem 3.1}{\it \ Given a quasi-circle $\Gamma \subset \partial_{\infty}AdS_{3}$, there exists a foliation by constant mean curvature surfaces $S_{H}$ for $H \in (-\infty, +\infty)$ of the domain of dependence $D(\Gamma)$.}\\

In the second part of the paper, we estimate the principal curvatures of a constant mean curvature surface. Those results will then be used to prove the uniqueness of the foliation (Theorem \ref{thm:uniqueness}) and to prove that each $H$-surface bounding a quasi-circle induces a quasi-conformal extension of a quasi-symmetric homeomorphism (Proposition \ref{prop:Hextension}).  

\subsection*{Outline of the paper} In Section 1 we describe some geometric models of the $3$-dimensional Anti-de Sitter space. In Section 2 we recall how to associate to a smooth surface bounding the graph of a quasi-symmetric homeomorphism $\phi$ a quasi-conformal extension of $\phi$ to the unit disc, following \cite{maxsurface}. Section 3 is devoted to the proof of Theorem \ref{thm:existencefoliation}. The uniqueness of the foliation is proved in Section 5, using some estimates on the principal curvatures of an $H$-surface proved in Section 4. In Section 6 we prove Proposition \ref{prop:Hextension}.

\section{Anti-de Sitter space}\label{sec:model}
\indent The $3$-dimensional Anti-de Sitter space $AdS_{3}$ is the Lorentzian analogue of the hyperbolic space, i.e. it is the local model for $3$-dimensional Lorentzian manifolds of constant sectional curvature $-1$. In this section we introduce two geometric models that we will use in the sequel: as the interior of a quadric in $\R^{3}$ and as a product $\h^{2} \times S^{1}$. The main references for the material covered here are \cite{Mess} and \cite{maxsurface}.

\subsection*{The Klein model of Anti-de Sitter space} Consider $\R^{4}$ endowed with the symmetric bilinear form of signature $(2,2)$
\[
	\langle x,y\rangle_{2,2}=-x_{0}y_{0}+x_{1}y_{1}+x_{2}y_{2}-x_{3}y_{3} \ .
\]
The $3$-dimensional Anti-de Sitter space is the quadric 
\[
	\widehat{AdS}_{3}=\{ x \in \R^{4} \ | \ \langle x,x\rangle_{2,2} =-1 \}
\]
endowed with the Lorentzian metric induced by the restriction of the bilinear form $\langle \cdot, \cdot\rangle_{2,2}$ to its tangent spaces. Given a point $p \in \widehat{AdS}_{3}$ and a tangent vector $v \in T_{p}\widehat{AdS}_{3}$, we say that
\begin{itemize}
\item $v$ is space-like if $\langle v,v\rangle_{2,2}>0$ \ ;
\item $v$ is light-like if $\langle v,v\rangle_{2,2}=0$ \ ;
\item $v$ is time-like if $\langle v,v\rangle_{2,2}<0$ \ .
\end{itemize}
In a similar way we distinguish space-like, time-like and light-like geodesics. These are obtained by intersecting a plane through the origin with the quadric.  Given two points $p, q\in \widehat{AdS}_{3}$, we say that they are causally related if there exists a non-space-like curve connecting them. Under this condition, we define the distance between $p$ and $q$ as 
\[
	d_{AdS}(p,q)=\sup_{\gamma}\int_{0}^{1}\sqrt{-g_{AdS}(\dot{\gamma}, \dot{\gamma})} ds \ ,
\]
where the supremum is taken over all possible non-space-like curves $\gamma:[0,1] \rightarrow \widehat{AdS}_{3}$ such that $\gamma(0)=p$ and $\gamma(1)=q$. A spacelike plane is a totally geodesic plane whose induced metric is Riemannian. Moreover, every space-like plane is isometric to the hyperbolic plane. More in general, a space-like surface is a $2$-dimensional sub-manifold whose tangent plane at every point is space-like. \\
\indent A time-orientation is a choice of a never-vanishing time-like vector field on $\widehat{AdS}_{3}$. The isometry group of orientation and time-orientation preserving isometries of $\widehat{AdS}_{3}$  is identified with $SO_{0}(2,2)$, as they are linear transformation of $\R^{4}$ preserving the bilinear form $\langle \cdot, \cdot \rangle_{2,2}$. \\
\\
\indent The Anti-de Sitter space can be more easily visualised by considering its image in the projective space. If we denote with $\pi: \R^{4}\setminus \{0\} \rightarrow \R\Pp^{3}$ the canonical projection we define
\[
	AdS_{3}=\pi(\{x \in \mathbb{R}^{4} \ | \ \langle x,x\rangle_{2,2}<0\}) \ .
\]
It can be easily verified that $\pi: \widehat{AdS}_{3} \rightarrow AdS_{3}$ is a double cover, hence we can endow $AdS_{3}$ with the unique Lorentzian structure that makes $\pi$ a local isometry. \\
\indent In the affine chart $U_{3}=\{x \in \R^{4} \ | \ x_{3}\ne 0\}$, the Anti-de Sitter space fills the interior of the double-ruled quadric $Q$ of equation $-x^{2}+y^{2}+z^{2}=1$. In analogy with the Klein model of the hyperbolic space, geodesics and totally geodesics planes are obtained by intersecting the interior of the quadric with affine lines and planes.\\
\indent Moreover, the projective duality in $\R\Pp^{3}$ induces a duality between points and totally geodesic space-like planes in $AdS_{3}$.  \\
\\
\indent It is then natural to define the boundary at infinity of $AdS_{3}$ as 
\[
	\partial_{\infty}AdS_{3}=\pi(\{ x \in \R^{4} \ |  \ \langle x,x\rangle_{2,2}=0 \}) \ .
\]
In the affine chart $U_{3}$, this corresponds to the double-ruled quadric $Q$ of equation $-x^{2}+y^{2}+z^{2}=1$. Using the two rulings, which we call left and right, we can identify the boundary at infinity of $AdS_{3}$ with two copies of $S^{1}$ in the following way.  Fix a spacelike plane $P$ in $AdS_{3}$. Its boundary at infinity is a circle. Given a point $q\in \partial_{\infty}AdS_{3}$, we can associate biunivocally a couple $(\pi_{l}(q), \pi_{r}(q))\in \partial_{\infty}P \times \partial_{\infty}P$ by following the left and right ruling of the quadric $Q$.\\ 
\\
\indent Given a continuous curve $\Gamma$ in $\partial_{\infty}AdS_{3}$, we say that $\Gamma$ is weakly acausal if for every point $p\in \Gamma$, there exists a neighborhood $U$ of $p$ in $\partial_{\infty}AdS_{3}$ such that $U\cap \Gamma$ is contained in the complement of the regions of $U$ which are connected to $p$ by time-like curves. 
\begin{defi} The domain of dependence $D(\Gamma)$ of a weakly acausal curve $\Gamma$ is the union of points $p \in AdS_{3}$ such that the dual plane $p^{*}$ is disjoint from $\Gamma$. 
\end{defi}

It turns out that the domains of dependence are always contained in an affine chart and only admit light-like support planes.\\
 
\indent We will also use the following notion:
 
\begin{defi} The convex hull $C(\Gamma)$ of a weakly acausal curve $\Gamma$ is the smallest closed convex subset which contains $\Gamma$.
\end{defi}

\subsection*{A product model for Anti-de Sitter space}
The map 
\begin{align*}
	F: \h^{2} \times S^{1} &\rightarrow \widehat{AdS}_ {3}\\
			(x_{0}, x_{1}, x_{2}, e^{i\theta}) &\mapsto (x_{0}\cos(\theta), x_{1}, x_{2}, x_{0}\sin(\theta))
\end{align*}
is a diffeomorphism (we are considering the hyperboloid model of $\h^{2}$). Hence $\h^{2} \times S^{1}$ is isometric to the Anti-de Sitter space, when endowed with the pull-back metric
\[
	F^{*}g_{AdS_{3}}(x, e^{i\theta})=g_{\h^{2}}-x_{0}^{2}d\theta^{2} \ .
\] 
It is sometimes convenient to consider the universal covering $\widetilde{AdS}_{3}\cong \h^{2}\times \R$ endowed with the metric
\[
	 g_{\widetilde{AdS}}=g_{\h}^{2}-x_{0}^{2}dt^{2} \ .
\]
We notice that $\frac{\partial}{\partial t}$ is a time-like Killing vector field. We will denote 
\[
	\chi^{2}=-\left\| \frac{\partial}{\partial t}\right\|^{2}
\]
and 
\[
	\grad t=-\frac{1}{\chi^{2}}\frac{\partial}{\partial t} \ .
\]
\indent This model is particularly useful to study space-like surfaces. In fact, space-like surfaces are graphs of functions (Prop. 3.2 in \cite{maxsurface})
\begin{align*}
	u: \h^{2} &\rightarrow \R \\
		x &\mapsto u(x)\ .
\end{align*}
Moreover, the space-like condition provides a uniform bound on the gradient of $u$. For instance, let us consider the function $\hat{u}$ on $\h^{2}\times \R$ given by
\[
	\hat{u}(x,t)=u(x) \ .
\]
The space-like surface described by the function $\hat{u}$ is defined by the equation $\hat{u}(x)-t=0$. This is space-like if and only if the normal vector at each point
\[
	\nu= -\chi^{2}\grad t-\grad(\hat{u})
\]
is time-like. We deduce the uniform bound
\[
	\|\grad(u)\|^{2} < \frac{1}{\chi^{2}}
\]
on the gradient of the function $u$. 

\section{Quasi-symmetric homeomorphisms and quasi-conformal extensions}\label{sec:extensions}
In this section we recall some well-known results about quasi-symmetric homeomorphisms of $S^{1}$. The graph of a quasi-symmetric homeomorphism $\phi$ describes a curve $c_{\phi}$ on the boundary at infinity of $AdS_{3}$ and, under some additional conditions, smooth negatively curved surfaces bounding $c_{\phi}$ provide quasi-conformal extensions of $\phi$ (\cite{maxsurface}, \cite{SchKra}). We recall here briefly this construction.\\
\\
\indent In Section \ref{sec:model} we have seen that it is possible to identify the boundary at infinity of the $3$-dimensional Anti-de Sitter space with $S^{1} \times S^{1}$. With this identification, we can represent the graph of a homeomorphism $\phi: S^{1} \rightarrow S^{1}$ as a curve on the boundary at infinity of $AdS_{3}$, namely
\[
	c_{\phi}=\{ (x,\phi(x))\in \partial_{\infty}AdS_{3} \ | \ x\in S^{1}\} \ .
\]
\\
\indent A homeomorphism $\phi: S^{1} \rightarrow S^{1}$ is quasi-symmetric if there exists a constant $C>0$ such that
\[
	\sup_{Q}| \log|cr(\phi(Q))|| \leq C \ ,
\]
where the supremum is taken over all quadruple $Q$ of points in $S^{1}$ with cross ratio $cr(Q)=-1$, and we use the following definition of cross-ratio
\[
	cr(x_{1}, x_{2}, x_{3}, x_{4})=\frac{(x_{4}-x_{1})(x_{3}-x_{2})}{(x_{2}-x_{1})(x_{3}-x_{4})} \ .
\] 

\begin{defi} An acausal curve $\Gamma\subset \partial_{\infty}AdS_{3}$ is a quasi-circle, if it is the graph of a quasi-symmetric homeomorphism. 
\end{defi}

\begin{oss} It follows from the identification between the boundary at infinity of $AdS_{3}$ and $S^{1} \times S^{1}$ that an acausal curve $\Gamma$ is a quasi-circle if and only if $\phi=\pi_{r}\circ \pi_{l}^{-1}$ is quasi-symmetric. Moreover, $\Gamma$ is the graph of $\phi$.
\end{oss}

\indent An orientation-preserving homeomorphism $f:D^{2} \rightarrow D^{2}$ is quasi-conformal if $f$ is absolutely continuous on lines and there exists a constant $k<1$ such that
\[
	|\mu_{f}|=\left| \frac{\overline{\partial}f}{\partial f}\right| \leq k  \ .
\]
A map with this property can also be called $K$-quasi-conformal, where
\[
	K=\frac{1+\|\mu_{f}\|_{\infty}}{1-\|\mu_{f}\|_{\infty}} \in [1, +\infty)
\]

The relation between quasi-symmetric homeomorphisms of the circle and quasi-conformal maps of the unit disc is provided by the following well-known theorem:

\begin{teo}[\cite{Ahlfors}] Every quasi-conformal map $\Phi: D^{2}\rightarrow D^{2}$ extends to a quasi-symmetric homeomorphism of $S^{1}$. Conversely, any quasi-symmetric homeomorphism $\phi: S^{1}\rightarrow S^{1}$ admits a quasi-conformal extension to $D^{2}$. 
\end{teo}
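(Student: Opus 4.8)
The two implications are classical and essentially independent, so the plan is to treat them separately, and throughout it is convenient to transfer everything to the upper half-plane via a fixed conformal identification $D^{2}\cong\{z:\Ima z>0\}$, $S^{1}\cong\R\cup\{\infty\}$. Under this identification the quasi-symmetry condition on $S^{1}$ (quadruples with $cr(Q)=-1$ mapping to quadruples of bounded cross ratio) is equivalent to the classical $M$-condition for the boundary map $h:\R\rightarrow\R$: there is $M\ge 1$ with $M^{-1}\le (h(x+t)-h(x))/(h(x)-h(x-t))\le M$ for all $x\in\R$ and $t>0$. I would record this equivalence first, since it lets me phrase both directions on the line.

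For the first implication, I would start from the fact that a $K$-quasi-conformal map $\Phi:D^{2}\rightarrow D^{2}$ extends to a homeomorphism $\phi$ of $\overline{D^{2}}$: by Mori's theorem $\Phi$ is uniformly Hölder continuous, hence uniformly continuous up to $S^{1}$, and since $\Phi^{-1}$ is also $K$-quasi-conformal and extends likewise, the two boundary extensions are mutually inverse and $\phi:=\Phi|_{S^{1}}$ is a homeomorphism. To see that $\phi$ is quasi-symmetric I would use the conformal modulus: four cyclically ordered points on $S^{1}$ determine a quadrilateral whose modulus is a monotone function of their cross ratio, and the harmonic normalization $cr(Q)=-1$ pins this modulus to a universal constant. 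Since a $K$-quasi-conformal map distorts the modulus of every quadrilateral by a factor in $[K^{-1},K]$, the image $\phi(Q)$ has modulus confined to a compact subinterval of $(0,\infty)$ depending only on $K$; translating moduli back into cross ratios yields a uniform bound $|\log|cr(\phi(Q))||\le C(K)$, which is exactly the quasi-symmetry estimate.

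For the converse, which is the substantial part, I would use the explicit Beurling--Ahlfors extension. Writing $h$ for the boundary map in the half-plane model, I define $F(x+iy)=U(x,y)+iV(x,y)$ by the averages
\[
U(x,y)=\frac{1}{2y}\int_{x-y}^{x+y}h(t)\,dt, \qquad V(x,y)=\frac{1}{2y}\left(\int_{x}^{x+y}h(t)\,dt-\int_{x-y}^{x}h(t)\,dt\right),
\]
which is manifestly $C^{1}$ in the open half-plane and satisfies $\lim_{y\to 0^{+}}F(x+iy)=h(x)$. The core of the argument is to differentiate these integrals, expressing $U_{x},U_{y},V_{x},V_{y}$ in terms of the differences $h(x+y)-h(x)$, $h(x)-h(x-y)$ and the average $U(x,y)$, and then to bound the Beltrami coefficient $\mu_{F}=F_{\bar z}/F_{z}$, with $F_{z}=\tfrac12(F_{x}-iF_{y})$ and $F_{\bar z}=\tfrac12(F_{x}+iF_{y})$, away from the unit circle, i.e. $|\mu_{F}|\le k(M)<1$, while keeping the Jacobian $J_{F}=|F_{z}|^{2}-|F_{\bar z}|^{2}$ strictly positive. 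Positivity of $J_{F}$ makes $F$ an orientation-preserving local homeomorphism, and a degree/monotonicity argument then upgrades it to a global homeomorphism of the closed half-plane restricting to $h$ on $\R$; conjugating back gives the desired quasi-conformal extension of $\phi$ to $D^{2}$.

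The main obstacle is precisely these dilatation estimates in the Beurling--Ahlfors step. The partial derivatives of $F$ involve the three quantities $h(x+y)-h(x)$, $h(x)-h(x-y)$ and $2U(x,y)-h(x+y)-h(x-y)$, and I must show that the worst-case ratio of eigenvalues of the associated quadratic form is controlled purely by $M$. This requires combining the pointwise $M$-condition with an integrated version of it that controls the average $U$, and it is here that the bookkeeping is least automatic; the remaining claims, namely continuity up to the boundary, injectivity and surjectivity, are comparatively routine once the differential of $F$ is understood.
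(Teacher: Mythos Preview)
The paper does not prove this theorem at all: it is quoted as a classical result with the citation \cite{Ahlfors} and immediately used, so there is no ``paper's own proof'' to compare against. Your outline is the standard Beurling--Ahlfors argument and is correct as a plan; the modulus-distortion reasoning for the first implication and the explicit averaging extension for the converse are exactly the ingredients of the original proof, and you have accurately identified the one nontrivial step (the uniform bound $|\mu_F|\le k(M)<1$ coming from the $M$-condition) as the place where real work remains.
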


If we represent the graph of a quasi-symmetric homeomorphism $\phi$ as a curve $c_{\phi}$ on the boundary at infinity of $AdS_{3}$, in \cite{SchKra} it is explained how to obtain quasiconformal extensions of $\phi$ using smooth, negatively curved, space-like surfaces with boundary at infinity $c_{\phi}$. The construction goes as follows. We fix a totally geodesic space-like plane $P_{0}$. Let $S$ be a space-like, negatively-curved surface embedded in $AdS_{3}$. Let $\tilde{S}'\subset U^{1}AdS_{3}$ be its lift into the unit tangent bundle of $AdS_{3}$ and let $p:\tilde{S}' \rightarrow \tilde{S}$ be the canonical projection. For any point $(x,v)\in \tilde{S}'$, there exists a unique space-like plane $P$ in $AdS_{3}$ orthogonal to $v$ and containing $x$. We define two natural maps $\Pi_{\infty, l}$ and $\Pi_{\infty,r}$ from $\partial_{\infty}P$ to $\partial_{\infty}P_{0}$, sending a point $x\in \partial_{\infty}P$ to the intersection between $\partial_{\infty}P_{0}$ and the unique line of the left or right foliation of $\partial_{\infty}AdS_{3}$ containing $x$. Since these maps are projective, they extend to hyperbolic isometries $\Pi_{l}, \Pi_{r}:P \rightarrow P_{0}$. We then define the map $\Phi=\Pi_{r}\circ \Pi_{l}^{-1}$. This map is always a local diffeomorphism of $\h^{2}$ when the surface is negatively curved, as the differentials of the maps $\Pi_{l}$ and $\Pi_{r}$ are given by
\[
	d\Pi_{l}=E+JB \ \ \ \ \ \ \ \ d\Pi_{r}=E-JB \ .
\]
On the other hand, $\Phi$ is not always a global diffeomorphism, but the following lemma gives some sufficient conditions on the surface $S$ which guarantee that $\Phi$ is proper (and hence a homeomorphism) and that its boundary value coincides with $\phi$:

\begin{lemma}[Lemma 3.18 in \cite{SchKra}]\label{lm:criterioestensione} Let $S$ be a space-like, negatively curved surface in $AdS_{3}$ whose boundary at infinity $\Gamma$ does not contain any light-like segment. Suppose that there is no sequence of points $x_{n}$ on $S$ such that the totally geodesic planes $P_{n}$ tangent to $S$ at $x_{n}$ converge to a light-like plane $P$ whose past end-point and future end-point are not in $\Gamma$. Then for any sequence of points $x_{n} \in S$ converging to $x\in \Gamma$ we have that $\Pi_{l}(x_{n})\to \pi_{l}(x)$ and $\Pi_{r}(x_{n})\to \pi_{r}(x)$.
\end{lemma}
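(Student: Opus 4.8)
The plan is to argue by contradiction through a compactness argument on the tangent planes of $S$. By symmetry between the two rulings it suffices to prove $\Pi_{l}(x_{n})\to\pi_{l}(x)$. Assuming this fails, I would pass to a subsequence so that $\Pi_{l}(x_{n})$ converges in $\overline{AdS}_{3}=AdS_{3}\cup\partial_{\infty}AdS_{3}$ to some $y\neq\pi_{l}(x)$. Writing $P_{n}$ for the totally geodesic space-like plane tangent to $S$ at $x_{n}$ (so $x_{n}\in P_{n}$ and $\Pi_{l}\colon P_{n}\to P_{0}$ is the hyperbolic isometry extending $\pi_{l}|_{\partial_{\infty}P_{n}}$), I would pass to a further subsequence so that the closures $\overline{P_{n}}$ converge, in the Hausdorff topology, to $\overline{P_{\infty}}$ for some totally geodesic plane $P_{\infty}$; being a limit of space-like planes, $P_{\infty}$ is space-like or light-like. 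Since $x_{n}\in\overline{P_{n}}$ and $x_{n}\to x\in\partial_{\infty}AdS_{3}$, we get $x\in\partial_{\infty}P_{\infty}$, and the goal becomes to show $y=\pi_{l}(x)$.

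If $P_{\infty}$ is space-like the argument is soft: the acausal circles $\partial_{\infty}P_{n}$ converge to the acausal circle $\partial_{\infty}P_{\infty}$, so the projective isomorphisms $\pi_{l}|_{\partial_{\infty}P_{n}}$ — which are the ideal boundary maps of the isometries $\Pi_{l}$ — converge to $\pi_{l}|_{\partial_{\infty}P_{\infty}}$; as a hyperbolic isometry depends continuously on its ideal boundary map, the continuous extensions $\overline{P_{n}}\to\overline{P_{0}}$ converge to the extension of the isometry $P_{\infty}\to P_{0}$ induced by $\pi_{l}$, and evaluating at $x_{n}\to x$ yields $\Pi_{l}(x_{n})\to\pi_{l}(x)$.

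The substantive case, where the hypotheses are used, is when $P_{\infty}$ is light-like, with past and future end-points $p^{-}$ and $p^{+}$. First I would pin down $x$: it lies in $\Gamma\cap\partial_{\infty}P_{\infty}$, and $\partial_{\infty}P_{\infty}$ is the union of two null segments joining $p^{-}$ to $p^{+}$, one in a leaf of the left foliation and one in a leaf of the right foliation; any point interior to one of these segments is causally related to both $p^{-}$ and $p^{+}$, so since $\Gamma$ is acausal and contains no light-like segment, $x$ interior would force neither $p^{-}$ nor $p^{+}$ to lie on $\Gamma$, against the hypothesis. Hence $x$ is an end-point, say $x=p^{+}$; as $p^{-}$ and $p^{+}$ lie on the null segment contained in a leaf of the left foliation, which $\pi_{l}$ collapses to a point, $\pi_{l}(p^{-})=\pi_{l}(p^{+})$, and we call this common value $c=\pi_{l}(x)$. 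To conclude I would fix a neighbourhood $V$ of $c$ in $\overline{P_{0}}$ bounded by a geodesic, and consider the closed half-planes $A_{n}=\Pi_{l}^{-1}(\overline{P_{0}}\setminus V)\subset\overline{P_{n}}$; their ideal boundaries are the sets of points of $\partial_{\infty}P_{n}$ whose left projection lies outside $V$, and since $\partial_{\infty}P_{n}\to\partial_{\infty}P_{\infty}$ while $\pi_{l}$ sends the whole left-foliation edge of $\partial_{\infty}P_{\infty}$ to $c\in V$, these accumulate only on a closed sub-arc of the right-foliation edge not containing $p^{\pm}$. As the $A_{n}$ are half-planes, they then accumulate only on the corresponding half-plane of $\overline{P_{\infty}}$, which avoids a fixed neighbourhood of $x$; hence $A_{n}$ avoids that neighbourhood for all large $n$, so $x_{n}\to x$ forces $x_{n}\notin A_{n}$ eventually, i.e. $\Pi_{l}(x_{n})\in V$. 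Letting $V$ shrink gives $\Pi_{l}(x_{n})\to c=\pi_{l}(x)$, and in all cases $y=\pi_{l}(x)$, the desired contradiction; the same argument on the right foliation gives $\Pi_{r}(x_{n})\to\pi_{r}(x)$.

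The main obstacle is this light-like case. One has to make precise the degeneration of the whole family of isometries $\Pi_{l}\colon P_{n}\to P_{0}$ as the planes $P_{n}$ collapse onto the light-like plane $P_{\infty}$, and verify carefully that the end-point hypothesis, together with the acausality of $\Gamma$, is exactly what keeps the points $x_{n}$ inside the part of $P_{n}$ that $\Pi_{l}$ sends near $\pi_{l}(x)$; in particular one must handle simultaneously the Hausdorff limit of the planes, the limit of the ideal boundary maps, and the limiting asymptotic direction of $x_{n}$ inside $P_{n}$, which is where the estimate really lies.
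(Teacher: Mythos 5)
The paper does not prove this lemma: it is imported verbatim as Lemma 3.18 of Krasnov--Schlenker \cite{SchKra} (see also the closely related lemmas in \cite{maxsurface}), so there is no in-paper proof to compare yours against. What you have written is a reconstruction of the source's argument, and its architecture is the right one: extract a convergent subsequence of the tangent planes $P_{n}$ in the compact space of space-like-or-light-like planes; dispose of the space-like limit by continuity of the boundary maps $\pi_{l}|_{\partial_{\infty}P_{n}}$ and of the induced isometries; and in the light-like case use the end-point hypothesis together with acausality of $\Gamma$ to force $x$ to be an end-point $p^{\pm}$ of $P_{\infty}$, then trap $\Pi_{l}(x_{n})$ near $\pi_{l}(x)$ by the half-plane/convex-hull argument. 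The step where you deduce that $x$ cannot be interior to a null edge is correct but leans on a fact you should state: $\Gamma$, being the asymptotic boundary of a space-like surface, is achronal and is the graph of a monotone relation, so two causally related points of $\Gamma$ would force an entire light-like segment of $\partial_{\infty}AdS_{3}$ to lie in $\Gamma$; only together with the ``no light-like segment'' hypothesis does this yield the acausality you invoke.

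Two places deserve more care, both of which you flag yourself. First, the identity $\pi_{l}(p^{-})=\pi_{l}(p^{+})$ and the description of $\partial_{\infty}P_{\infty}$ as two null edges, one collapsed by $\pi_{l}$ and one on which $\pi_{l}$ is injective, depend on the precise model of $\partial_{\infty}AdS_{3}$ being used (projective quadric versus the lift in which a left leaf and a right leaf meet twice); you should fix one model and verify the statement there, since the whole Case 2 computation of the limit value $c$ rests on it. Second, the passage from ``$\partial_{\infty}A_{n}$ accumulates on a compact subarc of the right edge away from $p^{\pm}$'' to ``$A_{n}$ eventually avoids a neighbourhood of $x$'' uses that the closed half-plane $A_{n}\subset\overline{P_{n}}$ is the convex hull of its ideal boundary arc and that convex hulls are continuous under Hausdorff convergence in a fixed affine chart containing all the $\overline{P_{n}}$ for large $n$; this is true here but is exactly where the degeneration of the isometries $\Pi_{l}\colon P_{n}\to P_{0}$ is being tamed, and it should be written out rather than asserted. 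With those two points made precise, your proof is complete and matches the argument of the cited source.
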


\begin{oss}\label{oss:criterioestensione} As noticed in \cite{SchKra}, the hypothesis of Lemma \ref{lm:criterioestensione} are satisfied in case of a smooth, convex, space-like surface bounding a quasi-circle.
\end{oss}

\section{Existence of a CMC foliation}\label{sec:existence}
This section is devoted to the proof of the following:

\begin{teo}\label{thm:existencefoliation} Given a quasi-circle $\Gamma \subset \partial_{\infty}AdS_{3}$, there exists a foliation by constant mean curvature surfaces $S_{H}$ for $H \in (-\infty, +\infty)$ of the domain of dependence $D(\Gamma)$.
\end{teo}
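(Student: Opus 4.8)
The plan is to construct the foliation by an exhaustion argument, obtaining each leaf $S_H$ as a limit of $H$-surfaces bounding approximating quasi-circles $\Gamma_n$ that are graphs of quasi-symmetric homeomorphisms conjugating cocompact Fuchsian groups, as announced in the introduction. First I would recall from \cite{CMCfoliation} and \cite{BonSep} that when $\Gamma_n$ is the boundary curve associated to a pair of points in $\T(S) \times \T(S)$, the corresponding maximal globally hyperbolic Cauchy-compact $AdS_3$ manifold (or more precisely the domain of dependence $D(\Gamma_n)$) admits a foliation by constant mean curvature surfaces $(S_H)_n$ with $H$ ranging over all of $\R$, each leaf being equivariant under the relevant representation and hence with bounded (indeed, uniformly controlled) principal curvatures. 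Since $\phi$ is quasi-symmetric, one can choose $\Gamma_n \to \Gamma$ in the Hausdorff topology with the quasi-symmetry constants of the approximating maps uniformly bounded; this uniform control is what will survive the limit.

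Next I would fix $H \in \R$ and pass to the limit in $n$ for the leaves $(S_H)_n$. The key analytic input is the a priori estimates of \cite{maxsurface}: writing each $(S_H)_n$ as the graph of a function $u_n : \h^2 \to \R$ in the product model $\h^2 \times \R$, the spacelike condition gives the uniform gradient bound $\|\grad(u_n)\|^2 < 1/\chi^2$, and the curvature estimates give uniform bounds on the second fundamental form on compact sets (and in fact globally, using that the $\Gamma_n$ stay in a fixed compact region because $\Gamma$ is a quasi-circle, so the domains of dependence are uniformly bounded away from $\partial_\infty AdS_3$). By standard elliptic regularity applied to the constant mean curvature equation, $u_n \to u_H$ in $C^\infty_{loc}$, and the limit surface $S_H$ is a complete spacelike $H$-surface with bounded principal curvatures. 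One then checks, using the description of $D(\Gamma)$ via dual planes and the fact that the $(S_H)_n$ are contained in $D(\Gamma_n)$, that $S_H \subset D(\Gamma)$ and that its asymptotic boundary is exactly $\Gamma$; the latter uses that the $(S_H)_n$ are trapped between explicit barriers (e.g. the past and future components of $\partial C(\Gamma_n)$, or graphs of $u_n \pm$ constants) that converge to the corresponding objects for $\Gamma$.

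Then I would verify that the surfaces $\{S_H\}_{H \in \R}$ actually foliate $D(\Gamma)$. Monotonicity in $H$ is the crucial structural fact: for $H_1 < H_2$ the surface $S_{H_1}$ lies strictly in the past of $S_{H_2}$. This follows by the maximum principle — if two $H$-surfaces with $H_1 \neq H_2$ and the same asymptotic boundary touched, the one with larger mean curvature would have to lie locally on the future side, contradicting the ordering forced by comparison of mean curvatures at a first contact point — and this ordering is inherited in the limit from the known ordering of the $(S_H)_n$ inside each $D(\Gamma_n)$. To see that the leaves exhaust $D(\Gamma)$, one shows that as $H \to \pm\infty$ the surfaces $S_H$ approach the future and past boundary components of the convex hull $C(\Gamma)$ (equivalently the two components of $\partial D(\Gamma)$ minus the light-like part), again by a limiting/barrier argument, so that every point of $D(\Gamma)$ lies between $S_{H}$ and $S_{H'}$ for suitable $H, H'$, hence on some leaf by continuity and monotonicity.

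I expect the main obstacle to be establishing the \emph{uniform} second fundamental form bounds for the approximating leaves $(S_H)_n$ — uniform both in $n$ and, for the exhaustion, controlled as $H$ varies — since the a priori estimates of \cite{maxsurface} are stated for maximal surfaces and must be adapted to nonzero constant mean curvature, and one must be careful that the gradient and curvature bounds do not degenerate as $\Gamma_n \to \Gamma$. A secondary delicate point is identifying the asymptotic boundary of the limit surface with $\Gamma$ rather than with some larger weakly acausal curve, which requires the non-existence of light-like segments in $\Gamma$ (guaranteed since $\Gamma$ is a quasi-circle) together with the barrier estimates; this is exactly the kind of issue handled by Remark~\ref{oss:criterioestensione} and Lemma~\ref{lm:criterioestensione}, and I would lean on that machinery.
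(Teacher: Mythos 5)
Your overall strategy is the paper's: approximate $\Gamma$ by quasi-circles $\Gamma_n$ invariant under (pairs of) cocompact Fuchsian groups, take the leaves $(S_H)_n$ of the foliations given by \cite{CMCfoliation}, pass to a $C^\infty_{loc}$ limit, and then establish monotonicity and exhaustion. But the two steps you flag as delicate are exactly where your proposal has genuine gaps. First, the barriers. You propose trapping the $(S_H)_n$ between the past and future components of $\partial C(\Gamma_n)$ (or between ``graphs of $u_n\pm$ constants'', which is circular). For $H\neq 0$ an $H$-surface is \emph{not} contained in the convex hull: already in the Fuchsian case $C(\Gamma)$ is a single totally geodesic plane while the CMC leaves sweep out all of $D(\Gamma)$, so $\partial C(\Gamma_n)$ is not a barrier. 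The paper instead uses the constant Gauss curvature surfaces $S^{\pm}_{\kappa}$ of \cite{BonSep} with $\kappa$ chosen so that $\sqrt{-1-\kappa}>|H|$ (Proposition \ref{prop:barriers}); the maximum principle then forces $(S_H)_n$ between them, and the stability of these $K$-surfaces under $\Gamma_n\to\Gamma$ (Theorem 7.8 of \cite{BonSep}) is what makes the compact sets uniform in $n$. Relatedly, the compactness itself does not come from second fundamental form bounds on the approximating leaves, as you suggest; it comes from Bartnik's interior gradient estimate for the gradient function $v_M$ (Lemma \ref{lm:stime}), which applies verbatim for any constant $H$, followed by elliptic regularity for Equation (\ref{eq:meancurvature}). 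The curvature bound on the limit surface is a separate, a posteriori lemma.

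Second, the exhaustion. Your statement that as $H\to\pm\infty$ the leaves approach the boundary components of $C(\Gamma)$, ``equivalently'' $\partial D(\Gamma)$, conflates two different sets: $C(\Gamma)$ is in general a proper subset of $D(\Gamma)$, and if the leaves only accumulated on $\partial C(\Gamma)$ they could not foliate $D(\Gamma)$. More importantly, no mechanism is offered for why a given $p\in D(\Gamma)$ lies on some leaf. The paper's argument is a concrete comparison: take $p_n\in D(\Gamma_n)$ with $p_n\to p$, let $H_n$ be the mean curvature of the leaf of the $n$-th foliation through $p_n$, and bound $H_n$ by tangency with the explicit CMC foliation of $D(P)$ by surfaces equidistant from a totally geodesic plane $P$ chosen with $\partial_\infty P$ disjoint from $\Gamma$ and $p\in D(P)$; the maximum principle at the first contact point gives $H_n\leq H_0$. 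Without such a quantitative bound on $H_n$, the exhaustion does not follow. Finally, the ``continuity'' you invoke to conclude that every point between two leaves lies on a leaf also requires proof: the paper shows that the increasing and decreasing limits $\lim_{H_k'\nearrow H}S_{H_k'}$ and $\lim_{H_k''\searrow H}S_{H_k''}$ coincide, using that each $D(\Gamma_n)$ is entirely foliated.
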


As outlined in the introduction, the main idea to construct a constant mean curvature surface with a given quasi-circle as boundary at infinity is a process by approximation. In fact, as a consequence of the work \cite{CMCfoliation}, the existence (and uniqueness) of a constant mean curvature foliation is known for a particular class of quasi-circles:

\begin{teo}[Thm 1.1 in \cite{CMCfoliation}]\label{thm:foliationBBZ} Let $\Gamma$ be a quasi-circle which is the graph of a quasi-symmetric homeomorphism that conjugates two cocompact Fuchsian groups. Then there exists a unique foliation by equivariant H-surfaces of the domain of dependence of $\Gamma$, where $H \in (-\infty, +\infty)$.
\end{teo}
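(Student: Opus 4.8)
The plan is to pass to the cocompact quotient and solve a global elliptic problem there. Since $\phi$ conjugates two cocompact Fuchsian groups $\rho_{l}, \rho_{r}: \pi_{1}(S) \to \PSL(2,\R)$, the pair $\rho = (\rho_{l}, \rho_{r})$ defines a representation into the identity component $\dPSL$ of the isometry group of $AdS_{3}$, whose limit curve on $\partial_{\infty}AdS_{3}$ is exactly $\Gamma$. By Mess's theory this is the holonomy of a globally hyperbolic maximal compact $AdS$ spacetime, and $\pi_{1}(S)$ acts properly discontinuously and cocompactly (in the spatial direction) on $D(\Gamma)$. An equivariant $H$-surface in $D(\Gamma)$ is therefore the same thing as a closed $H$-surface in the quotient, so I would reduce the whole statement to the existence and uniqueness of a CMC foliation of this compact spacetime.

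First I would set up the mean curvature equation in the product model. An equivariant spacelike surface is the graph of a function $u: \h^{2} \to \R$ satisfying the uniform gradient bound $\|\grad(u)\|^{2} < 1/\chi^{2}$ recorded in Section \ref{sec:model}, and the condition that its mean curvature equal $H$ becomes a quasilinear elliptic PDE for $u$ which is invariant under the cocompact action, hence effectively an equation on a closed surface. To produce a solution for each fixed $H$ I would run the continuity method in the parameter $H$, taking the maximal surface $H=0$ (whose existence is classical in the cocompact case) as the starting point. Openness follows from the implicit function theorem, since the linearization of the mean curvature operator is a Schr\"odinger-type Jacobi operator whose zeroth-order term carries the favorable sign coming from the Lorentzian ambient curvature, making it invertible on the closed quotient. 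Closedness of the set of attainable $H$ reduces to a priori estimates.

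The barriers that keep the solutions inside $D(\Gamma)$ come from the geometry of the convex hull $C(\Gamma)$ and of the domain of dependence: the past and future boundary components of $C(\Gamma)$ bound the region, and approaching the initial and final singularities one finds equidistant-type surfaces whose mean curvature diverges to $\pm \infty$, giving upper and lower barriers for any prescribed finite $H$. These give the $C^{0}$ bound, the spacelike condition gives the $C^{1}$ bound, and the decisive step is the $C^{2}$ estimate, i.e.\ a uniform bound on the principal curvatures. This is where I expect the main difficulty: one derives an elliptic inequality for the squared norm of the second fundamental form from a Simons-type identity built out of the $AdS$ Gauss--Codazzi equations, and applies the maximum principle on the compact quotient, using the $C^{0}$ and $C^{1}$ bounds to absorb the lower-order terms. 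Once the principal curvatures are controlled the equation is uniformly elliptic, Schauder estimates close the continuity argument, and an $H$-surface exists for every $H \in \R$.

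Finally I would establish uniqueness and the foliation property by the maximum principle. If $S$ and $S'$ are two closed $H$-surfaces, written as graphs of $u$ and $u'$, subtracting their mean curvature equations yields a linear elliptic equation for the difference $u - u'$, which descends to the compact quotient and therefore attains an interior extremum; the strong maximum principle then forces $u \equiv u'$. The same comparison applied to two different levels gives strict monotonicity: for $H < H'$ the surface $S_{H}$ lies entirely to one side of $S_{H'}$, so the $S_{H}$ are pairwise disjoint and ordered by $H$. It remains to check that they sweep out all of $D(\Gamma)$: the map $H \mapsto S_{H}$ is continuous by the implicit function theorem, and as $H \to \pm \infty$ the surfaces exit towards the future and past singularities by the diverging-mean-curvature barriers, so through every point of $D(\Gamma)$ there passes exactly one $S_{H}$. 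This produces the desired unique foliation by $H$-surfaces for $H \in (-\infty, +\infty)$.
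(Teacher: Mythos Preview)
This theorem is not proved in the paper at all: it is quoted verbatim from \cite{CMCfoliation} (Barbot--B\'eguin--Zeghib) and used as a black box input for the approximation argument in Theorem~\ref{thm:existenceHsurface}. There is therefore no ``paper's own proof'' to compare your proposal against.

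That said, your outline is a reasonable sketch of how one would prove such a result, and it is broadly in the spirit of the cited reference: pass to the GHMC quotient, solve the prescribed mean curvature problem on the closed spacetime, and use the maximum principle for uniqueness and ordering. A couple of points where your sketch is thin or slightly off: the existence argument in \cite{CMCfoliation} is not really a continuity method launched from the maximal surface but rather a barrier argument combined with Gerhardt-type results for closed Cauchy hypersurfaces; and your claim that the Jacobi operator is automatically invertible because of a ``favorable sign'' from the ambient curvature is not quite right in the Lorentzian setting---the potential term involves $\mathrm{Ric}(\nu,\nu) + |B|^{2}$, and one still has to argue (typically via the strong maximum principle on the closed quotient, not via a sign condition) that the kernel is trivial. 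These are the places where a referee would ask you to be more careful, but the overall architecture is sound.
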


Moreover, by a recent result in \cite{BonSep}(Lemma 7.2), every quasi-circle can be uniformly approximated by a sequence of quasi-circles, which are the graphs of quasi-symmetric homeomorphisms conjugating two cocompact Fuchsian groups. \\

\indent Therefore, given a quasi-circle $\Gamma$, we will consider a sequence of quasi-circles $\Gamma_{n}$, which are the graphs of quasi-symmetric homeomorphisms conjugating two cocompact Fuchsian groups, converging in the Hausdorff topology to $\Gamma$. For each $H \in (-\infty, +\infty)$, Theorem \ref{thm:foliationBBZ} provides a sequence of $H$-surfaces $(S_{H})_{n}$ with boundary at infinity $\Gamma_{n}$. In this section we will prove that the sequence $(S_{H})_{n}$ converges $C^{\infty}$ on compact sets to an $H$-surface $(S_{H})_{\infty}$ with boundary at infinity $\Gamma$. This will give us the existence of a surface with given boundary at infinity and given constant mean curvature $H$ for every $H \in (-\infty, +\infty)$. We will then prove that these surfaces provide a foliation of the domain of dependence of $\Gamma$. \\

\indent We first recall some definitions. In the universal cover of $AdS_{3}$, given a space-like surface $M$, we recall that $M$ is the graph of a function $u: \h^{2} \rightarrow \R$. We define the gradient function with respect to the vector field $T=-\chi \nabla t$ as
\[
	v_{M}=-\langle \nu, T \rangle=\frac{1}{\sqrt{1-\chi^{2}|\nabla u|^{2}}} 
\]
where $\nu$ is the unit future-oriented normal vector field. The shape operator of $M$ is defined by 
\[
	B(X)=-\nabla_{X}\nu
\]
for every vector field $X$ on $M$. The mean curvature of $M$ is 
\[
	H=\frac{\trace(B)}{2} \ .
\]
We can write explicitely a formula for the mean curvature of $M$, in terms of $u$ and $T$ (see e.g. \cite{Bartnik}):
\begin{equation}\label{eq:meancurvature}
	H=\frac{1}{2v_{M}}(\dive_{M}(\chi\grad_{M} u)+\dive_{M}T) \ . 
\end{equation}

We will need the following a-priori estimate for the gradient function $v_{M}$, which is a consequence of the work of Bartnik \cite{Bartnik}. Given a point $p \in \widetilde{AdS}_{3}$, we denote with $I^{+}(p)$ the set of points in the future of $p$, and similarly with $I^{-}(p)$ the set of points in the past of $p$. We will indicate with $I^{+}_{\epsilon}(p)$ the set of points in the future of $p$ at distance at least $\epsilon$. We have the following:

\begin{lemma}\label{lm:stime} Let $p \in \widetilde{AdS}_{3}$ and $\epsilon>0$. Let $K$ be a compact domain contained on a region where the covering map $\pi: \widetilde{AdS}_{3} \rightarrow AdS_{3}$ is injective. There exists a constant $C=C(p, \epsilon, K)$ such that for every $H$-surface $M$ that verifies
\begin{itemize}
	\item $\partial M \cap I^{+}(p)=\emptyset$;
	\item $M\cap I^{+}(p)\subset K$,
\end{itemize}
we have that 
\[
	\sup_{M\cap I^{+}_{\epsilon}(p)} v_{M} <C \ .
\]
\end{lemma}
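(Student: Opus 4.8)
The plan is to reduce the statement to the interior gradient estimate of Bartnik for prescribed mean curvature spacelike hypersurfaces. First I would set up the geometry near the point $p$: choose a fixed spacelike totally geodesic plane $P_0$ through $p$ and use the product model $\widetilde{AdS}_3 \cong \h^2 \times \R$ so that $P_0$ corresponds to a level set $\{t=0\}$ and the future cone $I^+(p)$ is controlled by the Lorentzian distance function from $p$. Since by hypothesis $M \cap I^+(p)$ is contained in the compact set $K$ on which the covering map is injective, the portion of $M$ we care about can be regarded as a spacelike graph $t = u(x)$ over a bounded region $\Omega \subset \h^2$, with $u$ satisfying the quasilinear elliptic PDE \eqref{eq:meancurvature} with right-hand side the constant $H$. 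The coefficients of this equation, together with the geometry of the ambient metric $g_{\widetilde{AdS}}$, are uniformly controlled on $K$ because $K$ is a fixed compact set and $\chi$ is smooth and bounded away from $0$ and $\infty$ there.

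Next I would localize. Pick any point $q \in M \cap I^+_\epsilon(p)$. Because $q$ lies at Lorentzian distance at least $\epsilon$ to the future of $p$ and $\partial M$ does not meet $I^+(p)$ at all, there is a definite ``room'' around $q$ inside $M$: a sub-piece of $M$ containing $q$, lying inside $I^+_{\epsilon/2}(p)$, whose closure still avoids $\partial M$ and is contained in $K$. More precisely, the past cone $I^-(q)$ intersected with $\{$points at distance $\ge \epsilon/2$ to the future of $p\}$ gives a compact ``lens'' region $L = L(p,\epsilon)$, independent of $M$, such that $q$ lies in the interior of $M \cap L$ and $\partial(M\cap L) \subset \partial I^-(q) \cup (\text{boundary of the lens})$, none of which is $\partial M$. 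This is the standard trick for turning a boundary-free hypothesis into an interior estimate: the a priori bound will depend on the distance from $q$ to the ``edge'' of this lens, which is bounded below by a constant depending only on $p$, $\epsilon$, $K$.

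Now I would apply Bartnik's interior gradient estimate (Theorem in \cite{Bartnik}) to the graph function $u$ over the region corresponding to the lens $L$: for a spacelike graph solving a prescribed-mean-curvature equation with bounded $H$ in a fixed ambient metric, the gradient function $v_M = (1-\chi^2|\nabla u|^2)^{-1/2}$ at an interior point is bounded in terms of the ambient geometry, the bound on $H$, the size of the region, and the distance from the point to the boundary of the region. Since all of these are controlled by $p$, $\epsilon$, $K$ alone (and $H$ is fixed), we obtain $v_M(q) < C(p,\epsilon,K)$, and taking the supremum over $q \in M\cap I^+_\epsilon(p)$ finishes the proof. The main obstacle — and the point requiring care — is the second step: verifying that the ``lens'' $L(p,\epsilon)$ genuinely sits at positive distance from $\partial M$ using only the two bulleted hypotheses, and that its geometry is uniform in $M$; this is where one must use the fact that causal relations in $\widetilde{AdS}_3$ restricted to $K$ are comparable to those of a fixed background, so that ``future of $p$ at distance $\ge \epsilon$'' is a fixed compactly-contained region rather than something depending on $M$. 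Once that is pinned down, the analytic input is exactly Bartnik's estimate applied off the shelf.
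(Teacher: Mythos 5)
Your proposal is correct and follows essentially the same route as the paper: the paper introduces the time function $\tau(x)=d_{AdS}(x,p)-\epsilon/2$ on $V=K\cap I^{+}(p)$, observes that the hypotheses force $\{\tau\geq 0\}\cap M$ to be compactly contained in $M\cap K$ away from $\partial M$ while $M\cap I^{+}_{\epsilon}(p)$ sits in the interior region $\{\tau\geq\epsilon/2\}$, and then applies Bartnik's Theorem 3.1 off the shelf, with the constant controlled by the $C^{2}$ norms of the time functions and the $C^{0}$ norm of the Ricci tensor on the fixed compact set $V\cap\{\tau\geq 0\}$. Your per-point ``lens'' is just a localized version of this same buffer built from the Lorentzian distance to $p$, so the argument matches.
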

\begin{proof} Consider the time function
\[
	\tau(x)=d_{AdS}(x,p)-\frac{\epsilon}{2} \ ,
\]
where $d_{AdS}(x,p)$ is the Lorentzian distance between $x$ and $p$. This function is smooth on $V=K \cap I^{+}(p)$. By assumption on $M$, the region $M\cap V$ contains the set $\{ \tau \geq 0\}\cap M$ and $M\cap I^{+}_{\epsilon}(p)$ is contained in $V$. We can thus apply Theorem 3.1 in \cite{Bartnik} and conclude that
\[
	\sup_{M \cap I_{\epsilon}^{+}(p)} v_{M} < C \ ,
\]
where the constant $C$ depends on the $C^{2}$ norms of $t$ and $T$ and on the $C^{0}$ norm of the Ricci tensor on the domain $V \cap \{ \tau \geq 0\}$ with respect to a reference Riemannian metric. 
\end{proof}

We will also need the following result that provides some barriers for constant mean curvature surfaces in Anti-de Sitter manifolds:

\begin{prop}\label{prop:barriers} Let $\Sigma$ be a space-like surface with constant mean curvature $H$ embedded in $AdS_{3}$ with boundary at infinity a quasi-circle $\Lambda$. Suppose that $\Sigma$ the lift of a compact surface embedded in a GHMC Anti-de Sitter manifold. Then there exists $\kappa \leq -1$ such that $\Sigma$ is in the past of the past-convex surface $S_{\K}^{+}$ and in the future of the future-convex surface $S_{\K}^{-}$ with constant Gauss curvature $\K$ and asymptotic boundary $\Lambda$.
\end{prop}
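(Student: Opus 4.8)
The plan is to produce the two barriers by exhibiting, for $\K$ negative enough, a future-convex and a past-convex constant Gauss curvature surface with asymptotic boundary $\Lambda$ that lie, respectively, entirely below and entirely above $\Sigma$. The only real input is the constant Gauss curvature foliation of a GHMC Anti-de Sitter manifold; once that is granted, positioning $\Sigma$ between two of its leaves is a soft consequence of the compactness of the projection $\bar{\Sigma}$ of $\Sigma$ to $N$, obtained by letting $\K\to-\infty$.

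Let $N$ be the GHMC Anti-de Sitter manifold whose associated quasi-circle is $\Lambda$, so that its universal cover is $D(\Lambda)$, its holonomy group $\Gamma_{N}$ acts on $D(\Lambda)$ with quotient $N$, and $\Sigma$ is the $\Gamma_{N}$-invariant lift of an embedded compact surface $\bar{\Sigma}\subset N$. The convex core $C(N)$ is compact, and $N\setminus C(N)$ has two connected components, $\mathcal{P}=I^{-}(C(N))$ and $\mathcal{F}=I^{+}(C(N))$. Recall, from the work underlying \cite{CMCfoliation} and \cite{BonSep}, that $\mathcal{P}$ is foliated by future-convex surfaces $\bar{S}^{-}_{\K}$ of constant Gauss curvature $\K$ and $\mathcal{F}$ by past-convex surfaces $\bar{S}^{+}_{\K}$ of constant Gauss curvature $\K$, with $\K$ ranging over $(-\infty,-1)$; that the leaf through a point depends continuously on the point, so that the curvature defines a continuous function on $\mathcal{P}$ and on $\mathcal{F}$; that the foliations are time-ordered, with $\bar{S}^{-}_{\K}\subset I^{-}(\bar{S}^{-}_{\K'})$ and $\bar{S}^{+}_{\K}\subset I^{+}(\bar{S}^{+}_{\K'})$ whenever $\K<\K'$; and that $\bar{S}^{-}_{\K}\to\partial^{-}C(N)$, $\bar{S}^{+}_{\K}\to\partial^{+}C(N)$ as $\K\to-1$. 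Each $\bar{S}^{\pm}_{\K}$ is a closed spacelike surface in the globally hyperbolic manifold $N$, hence a Cauchy surface, and its lift $S^{\pm}_{\K}\subset D(\Lambda)$ is a $\Gamma_{N}$-invariant surface with asymptotic boundary $\Lambda$ (cf. \cite{Mess}).

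Now exploit the compactness of $\bar{\Sigma}$. Since $\K\to-\infty$ only towards the initial singularity of $N$ (the end of $\mathcal{P}$ opposite to $C(N)$), where the curvature function blows up, while $\bar{\Sigma}$ is compact, that function is bounded below on $\bar{\Sigma}\cap\mathcal{P}$; let $\K_{1}\in(-\infty,-1)$ be a lower bound for it, with the convention $\K_{1}=-1$ if $\bar{\Sigma}\cap\mathcal{P}=\emptyset$, and define $\K_{2}$ symmetrically from $\bar{\Sigma}\cap\mathcal{F}$. Fix any $\K<\min(\K_{1},\K_{2})$; in particular $\K<-1$. If $x\in\bar{\Sigma}\cap\mathcal{P}$ then $x$ lies on a leaf $\bar{S}^{-}_{\mu}$ with $\mu\geq\K_{1}>\K$, hence $x\in\bar{S}^{-}_{\mu}\subset I^{+}(\bar{S}^{-}_{\K})$ by time-ordering; if instead $x\in\bar{\Sigma}\cap(C(N)\cup\mathcal{F})$ then $x$ lies in the future of $\bar{S}^{-}_{\K}$, the latter being a Cauchy surface contained in the past $\mathcal{P}$ of the convex core. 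Thus $\bar{\Sigma}$ lies in the future of $\bar{S}^{-}_{\K}$, and symmetrically in the past of $\bar{S}^{+}_{\K}$. Lifting to $D(\Lambda)$, the surface $\Sigma$ lies in the future of the future-convex surface $S^{-}_{\K}$ and in the past of the past-convex surface $S^{+}_{\K}$, both of constant Gauss curvature $\K\leq-1$ and with asymptotic boundary $\Lambda$, which is exactly the claim.

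The only place where something beyond bookkeeping is required is the existence, continuity, monotonicity and limiting behaviour of the constant Gauss curvature foliations of the two components of $N\setminus C(N)$, and I expect that — rather than the sandwiching argument above — to be where the cited references do the real work. A minor case to keep in mind is the Fuchsian one, where $C(N)$ degenerates to a totally geodesic surface and $\mathcal{P}$, $\mathcal{F}$ are its two sides, but the reasoning applies verbatim.
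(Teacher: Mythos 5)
Your argument is correct, but it is genuinely different from the one in the paper, and the difference matters downstream. You sandwich $\Sigma$ between two leaves of the constant Gauss curvature foliation of $N\setminus C(N)$ by a soft compactness argument: the curvature-of-the-leaf function is bounded on the compact projection $\bar{\Sigma}$, so some leaf lies entirely below it and some leaf entirely above. This never uses that $\Sigma$ has constant mean curvature, so it proves the literal statement for any compact space-like surface in $N$ --- but the $\K$ it produces depends on the particular surface (on how far $\bar{\Sigma}$ penetrates into $I^{\pm}(C(N))$). The paper instead runs a first-contact/maximum-principle argument: choose $\K<-1$ with $\sqrt{-1-\K}>|H|$; if $S^{+}_{\K}$ met $\Sigma$, sliding through the $\K'$-foliation would produce a leaf $S_{\K'}$, $\K'<\K$, tangent to $\Sigma$ from the future, and at the tangency point the Gauss equation forces the mean curvature of $S_{\K'}$ to exceed $\sqrt{-1-\K'}>|H|$, contradicting Lemma \ref{lm:maxprinciple}. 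Both proofs rest on the same input (the $K$-surface foliation of the complement of the convex hull from \cite{BonSep}, \cite{CMCfoliation}), but the paper's version buys a $\K$ depending only on $|H|$, uniformly over all $H$-surfaces and all boundary curves. That uniformity is exactly what is used in the proof of Theorem \ref{thm:existenceHsurface}, where a single $\K$ must serve as a barrier for the whole approximating sequence $(S_{H})_{n}$ with boundaries $\Gamma_{n}$, so that Theorem 7.8 of \cite{BonSep} can be applied to the fixed-$\K$ sequence $(\tilde{\Sigma}^{\pm}_{\K})_{n}$. With your construction the constants $\K_{1},\K_{2}$ could a priori degenerate to $-\infty$ along the sequence, so if you want your proof to replace the paper's you would still need to extract an $H$-dependent (or otherwise $n$-independent) bound --- which essentially forces you back to the mean-curvature comparison.
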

\begin{proof} If $H=0$, the statement holds, since a maximal surface is contained in the convex hull of $\Lambda$. For the other values of $H$ we choose $\kappa <-1$ such that $\sqrt{-1-\kappa}>|H|$. We claim that the past-convex space-like surface $S_{\K}^{+}$ with constant curvature $\K$, whose existence is proved in \cite{BonSep}, must be in the future of $\Sigma$. If not, the surfaces $\Sigma$ and $S_{\K}^{+}$ would intersect transversely, but, since constant curvature surfaces provide a foliation of $D(\Lambda)\setminus C(\Lambda)$, there would exist a $\K'<\K$ such that the surface $S_{\K'}$ with constant Gauss curvature $\K'$ is tangent to $\Sigma$ at a point $x$. By the Maximum Principle, the mean curvature of $S_{\K'}$ at $x$ must be smaller than the mean curvature of $\Sigma$ at $x$, but this is impossible for our choice of $\K'$. \\
\indent With a similar reasoning we obtain that the future-convex space-like surface $S_{\K}$ must be in the past of $\Sigma$. 
\end{proof} 

We have now all the ingredients to prove the existence of an $H$-surface with given asymptotic boundary. Let $\Gamma$ be a quasi-circle on $AdS_{3}$ and let $\Gamma_{n}$ be a sequence of quasi-circles converging to $\Gamma$ in the Hausdorff topology that are the graphs of quasi-symmetric homeomorphisms that conjugate two cocompact Fuchsian groups. Let $(S_{H})_{n}$ be the $H$-surface with asymptotic boundary $\Gamma_{n}$ provided by Theorem \ref{thm:foliationBBZ}. 

\begin{teo}\label{thm:existenceHsurface} The sequence of $H$-surfaces $(S_{H})_{n}$ converges $C^{\infty}$ on compact sets to an $H$-surface $(S_{H})_{\infty}$ with boundary at infinity $\Gamma$.
\end{teo}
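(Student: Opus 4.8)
The strategy is a compactness argument: extract a convergent subsequence from $(S_H)_n$ using a-priori interior estimates, identify the limit as an $H$-surface, and then check that the asymptotic boundary is exactly $\Gamma$. The main work is controlling the geometry of $(S_H)_n$ uniformly in $n$ on compact subsets of $D(\Gamma)$, which decomposes into (i) a uniform time-barrier confining the surfaces to a fixed compact region, and (ii) a uniform gradient-function bound there.

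\medskip
\emph{Step 1 (Uniform confinement between barriers).} First I would use Proposition \ref{prop:barriers}. Each $\Gamma_n$ is the graph of a quasi-symmetric homeomorphism conjugating cocompact Fuchsian groups, so $(S_H)_n$ is the lift of a compact surface in a GHMC $AdS_3$ manifold; hence Proposition \ref{prop:barriers} applies and, choosing a single $\kappa \le -1$ with $\sqrt{-1-\kappa}>|H|$ (independent of $n$), the surface $(S_H)_n$ lies in the region bounded by the constant-curvature $\kappa$-surfaces with asymptotic boundary $\Gamma_n$, which in turn sit inside $D(\Gamma_n)$. Since $\Gamma_n \to \Gamma$ in the Hausdorff topology, the domains of dependence $D(\Gamma_n)$ (and the $\kappa$-surface regions) are eventually contained in a fixed compact region $\mathcal{K} \subset D(\Gamma)$ — here one uses that the $D(\Gamma_n)$ all lie in a common affine chart and vary continuously, so a Hausdorff-convergence argument on the closure gives the uniform containment. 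In particular all the $(S_H)_n$ are uniformly spacelike graphs $u_n:\h^2\to\R$ (after fixing the product model $\h^2\times\R$) lying in $\mathcal{K}$.

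\medskip
\emph{Step 2 (Uniform gradient bound on compact sets).} Fix a compact $D \subset D(\Gamma)$ and a point $x_0$ in $D$; I would choose $p \in \widetilde{AdS}_3$ in the past of $\mathcal{K}$ (this is possible since $\mathcal{K}$ lies in a single chart, and one can take $p$ far enough in the past that $I^+(p) \cap \widetilde{AdS}_3$ contains a fixed neighborhood of $\mathcal{K}$ yet $\partial(S_H)_n \subset \partial_\infty$, so $\partial (S_H)_n \cap I^+(p) = \emptyset$ trivially since the boundary is at infinity). Then $(S_H)_n \cap I^+(p) \subset \mathcal{K}$ for a compact $\mathcal{K}$ on which $\pi$ is injective, and Lemma \ref{lm:stime} yields a constant $C = C(p,\epsilon,\mathcal{K})$, independent of $n$, with $\sup_{(S_H)_n \cap I^+_\epsilon(p)} v_{(S_H)_n} < C$. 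Choosing $\epsilon$ small enough that $D \subset I^+_\epsilon(p)$, this gives a uniform bound $|\nabla u_n| \le c < 1/\chi$ on the region corresponding to $D$, i.e. the $u_n$ are uniformly Lipschitz there with a Lipschitz constant bounded away from the spacelike threshold.

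\medskip
\emph{Step 3 (Higher-order estimates and extraction of a limit).} With a uniform $C^1$ bound on $u_n$ in hand, I would feed this into the quasilinear elliptic equation \eqref{eq:meancurvature} for a spacelike $H$-surface. Uniform spacelikeness makes this equation uniformly elliptic on $D$; Schauder theory (interior estimates for quasilinear uniformly elliptic PDE, after a bootstrap from $C^{1,\alpha}$) then bounds all $C^k$ norms of $u_n$ on a slightly smaller compact set, uniformly in $n$. By Arzelà–Ascoli and a diagonal argument over an exhaustion of $D(\Gamma)$ by compacts, a subsequence $u_{n_k}$ converges in $C^\infty_{loc}$ to a limit $u_\infty:\h^2\to\R$, whose graph $(S_H)_\infty$ is spacelike and satisfies \eqref{eq:meancurvature} with the same $H$ — hence is an $H$-surface. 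It also inherits from the barriers a bound on its principal curvatures (one can get this from the constant-curvature barriers of Step 1 together with $H$ being constant, or argue as in \cite{maxsurface}).

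\medskip
\emph{Step 4 (Asymptotic boundary is $\Gamma$).} Finally I would check $\partial_\infty (S_H)_\infty = \Gamma$. The containment $(S_H)_\infty \subset \mathcal{K} \subset \overline{D(\Gamma)}$ forces $\partial_\infty(S_H)_\infty \subseteq \Gamma$ (a spacelike surface in $\overline{D(\Gamma)}$ cannot escape the closure, and $\partial_\infty D(\Gamma) = \Gamma$ since $D(\Gamma)$ is a domain of dependence). For the reverse inclusion, I would use the barriers again: the constant-curvature surfaces $S_\kappa^{\pm}$ associated to $\Gamma$ have asymptotic boundary exactly $\Gamma$, and $(S_H)_\infty$ is squeezed between $S_\kappa^+$ and $S_\kappa^-$ (this passes to the limit from the uniform-$\kappa$ barriers of Step 1, since $\Gamma_n \to \Gamma$ forces $S_\kappa^\pm(\Gamma_n) \to S_\kappa^\pm(\Gamma)$ on compacts), so its closure contains $\Gamma$. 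Hence $\partial_\infty(S_H)_\infty = \Gamma$. Since the limit is independent of the subsequence — once uniqueness of such $H$-surfaces is available, or simply because any two subsequential limits are $H$-surfaces trapped between the same barriers and one can invoke the uniqueness statement proven later — the full sequence converges.

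\medskip
The step I expect to be the main obstacle is Step 1, specifically turning the Hausdorff convergence $\Gamma_n \to \Gamma$ into a \emph{uniform} compact containment of the barrier regions, and simultaneously ensuring the gradient-bound hypotheses of Lemma \ref{lm:stime} hold uniformly: one must carefully track that the chosen past point $p$ and compact sets $K$, $\mathcal{K}$ can be fixed once and for all, independent of $n$, which relies on the domains of dependence $D(\Gamma_n)$ all fitting in a common affine chart and degenerating nicely — a fact that ultimately traces back to the acausality of $\Gamma$ and the structure of domains of dependence recalled in Section \ref{sec:model}.
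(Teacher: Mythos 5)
Your overall architecture coincides with the paper's: barriers from Proposition \ref{prop:barriers}, the gradient bound of Lemma \ref{lm:stime}, elliptic regularity to extract a limit in $C^{\infty}_{loc}$, and identification of the asymptotic boundary via $\Gamma_{n}\to\Gamma$. However, Steps 1--2 as written contain a genuine gap, precisely at the point you flag as the main obstacle. The region between the two constant-curvature barriers is \emph{not} contained in any compact subset of $D(\Gamma)$: it is a non-compact set accumulating on the whole quasi-circle at infinity, so there is no fixed compact $\mathcal{K}$ containing all the $(S_{H})_{n}$, and a fortiori no single point $p$ whose future contains a neighborhood of $\mathcal{K}$ while keeping $(S_{H})_{n}\cap I^{+}(p)$ compact. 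Indeed, if you push $p$ far enough into the past that $I^{+}(p)$ swallows the whole barrier region, then $(S_{H})_{n}\cap I^{+}(p)$ is all of $(S_{H})_{n}$, a complete entire graph over $\h^{2}$, and the hypothesis ``$M\cap I^{+}(p)\subset K$ with $K$ compact'' of Lemma \ref{lm:stime} fails; that compactness hypothesis is the one doing the work, not the (here vacuous) condition on $\partial M$.

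The paper resolves this by localizing: in the product model it intersects the barrier region with a cylinder $B_{R}\times\R$ over a hyperbolic ball, which \emph{is} compact, and covers this piece by the futures $I^{+}_{\epsilon(\tilde{p}_{j})}(\tilde{p}_{j})$ of finitely many points $\tilde{p}_{j}$ chosen \emph{inside} $\widetilde{D(\Gamma)}$. The point of choosing $\tilde{p}_{j}\in\widetilde{D(\Gamma)}$ is that $I^{+}(\tilde{p}_{j})\cap\widetilde{D(\Gamma)}$ is then compact, and since the dual plane $\tilde{p}_{j}^{*}$ is disjoint from $\Gamma$ it is also disjoint from $\Gamma_{n}$ for $n$ large; hence both hypotheses of Lemma \ref{lm:stime} hold for $(S_{H})_{n}$ uniformly in $n$ with compact sets $K_{j}$ on which the covering map is injective. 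Taking the maximum of the finitely many constants $C_{j}$ gives the gradient bound on $(S_{H})_{n}\cap(B_{R}\times\R)$, and letting $R\to\infty$ with a diagonal extraction recovers your Step 3. Your Steps 3 and 4 are then essentially the paper's argument (the paper settles the boundary identification directly from the Hausdorff convergence $\Gamma_{n}\to\Gamma$ rather than by a separate squeezing argument, but your version is also acceptable). So: right strategy, but the uniform confinement must be carried out locally in cylinders with observer points inside the domain of dependence, not globally with a single deep-past point.
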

\begin{proof} We consider their lifts $(\tilde{S}_{H})_{n}$ to the universal cover $\widetilde{AdS}_{3}$. We denote with $(\tilde{\Sigma}_{\K}^{\pm})_{n}$ the lifts of the two constant curvature surfaces provided by Proposition
\ref{prop:barriers}. In general, we will use the notation with a tilda to indicate the lift of an object to the universal cover. By Theorem 7.8 in \cite{BonSep}, the sequence $(\tilde{\Sigma}_{\K}^{\pm})_{n}$ converges to constant curvature surfaces $\tilde{\Sigma}_{\K}^{\pm}$ with boundary at infinity $\tilde{\Gamma}$. We denote with $K'$ the domain 
\[
	K'=I^{-}(\tilde{\Sigma}_{\K}^{+})\cap I^{+}(\tilde{\Sigma}_{\K}^{-}) \ .
\]
For any point $\tilde{p} \in \widetilde{D(\Gamma)}\cap I^{-}(\tilde{\Sigma}_{\K}^{-})$, we choose $\epsilon(\tilde{p})$ such that the family $\{ I^{+}_{\epsilon(\tilde{p})}(\tilde{p}) \cap K'\}$ is an open covering of $K'$. Since 
\[
	K'_{n}=I^{-}((\tilde{\Sigma}_{\K}^{+})_{n})\cap I^{+}((\tilde{\Sigma}_{\K}^{-})_{n})
\]
converges to $K'$, there exists an $n_{0}$ such that for every $n \geq n_{0}$ the closed set
\[
	K=\overline{\bigcup_{n \geq n_{0}}K'_{n}}
\]
is contained in the open covering $\cup\{ I^{+}_{\epsilon(\tilde{p})}(\tilde{p})\}$ constructed above.     \\
Given a number $R>0$ we denote with $B_{R}$ the ball of radius $R$ in $\h^{2}$ centered at the origin in the Poincaré model. The intersection $(B_{R} \times \R)\cap K$ is compact, so there is a finite number of points $\tilde{p}_{1}, \dots, \tilde{p}_{m}$ such that 
\[
	(B_{R}\times \R) \cap K \subset \bigcup_{j=1}^{m} I^{+}_{\epsilon(\tilde{p}_{j})}(\tilde{p}_{j})  \ .
\]
We notice that, since $\tilde{p}_{j} \in \widetilde{D(\Gamma)}$, the intersection $I^{+}(\tilde{p}_{j}) \cap \widetilde{D(\Gamma)}$ is compact. Moreover, since the plane dual to $\tilde{p}_{j}$ is disjoint from $\tilde{\Gamma}$ for every $j=1, \dots, m$, if we choose $n_{0}$ big enough, the same is true for $\Gamma_{n}$ for every $n \geq n_{0}$, because $\Gamma_{n}$ converges to $\Gamma$ in the Hausdorff topology. In this way we can ensure that the set $K_{j}=\overline{I^{+}(\tilde{p}_{j})}\cap K$ is compact and contained on a region where the covering map $\pi: \widetilde{AdS}_{3} \rightarrow AdS_{3}$ is injective. By Lemma \ref{lm:stime}, there is a constant $C_{j}$ such that
\[
	\sup_{M \cap I^{+}_{\epsilon(\tilde{p}_{j})}(\tilde{p}_{j})} v_{M} < C_{j}
\]
for every constant mean curvature surface $M$ satisfying 
\begin{enumerate}[(i)]
\item $\partial M \cap I^{+}(\tilde{p}_{j}) = \emptyset$ \ ;
\item $M \cap I^{+}(\tilde{p}_{j})$ is contained in $K_{j}$ \ .
\end{enumerate}
Condition (i) is clearly satisfied for $n \geq n_{0}$ by definition of the set $K$. As for Condition (ii), the boundary of $(\tilde{S}_{H})_{n}$ is disjoint from the future of $\tilde{p}_{j}$ for every $j=1, \dots m$ due to our choice of $n_{0}$. \\
If we denote with $v_{n}$ the gradient function associated to the surface $(\tilde{S}_{H})_{n}$, it follows that
\begin{equation}\label{stima}
	\sup_{(\tilde{S}_{H})_{n} \cap (B_{R} \times \R)} v_{n} \leq \max\{ C_{1}, \dots , C_{m}\}
\end{equation}
for every $n \geq n_{1}$.\\
\indent We deduce that for every $R$ there is a constant $C(R)$ such that the gradient function $v_{n}$ is bounded by $C(R)$ for $n$ sufficiently large.\\
\indent Let $u_{n}: \h^{2} \rightarrow \R$ such that $(\tilde{S}_{H})_{n}$ are the graph of the function $u_{n}$. By comparing Equation (\ref{eq:meancurvature}) with estimate (\ref{stima}), we see that the restriction of $u_{n}$ on $B_{R}$ is the solution of a uniformly elliptic quasi-linear PDE with bounded coefficients. Since $u_{n}$ and the graident $\nabla u_{n}$ are uniformly bounded on $B_{R}$ (see Section \ref{sec:model}), by elliptic regularity the norms of $u_{n}$ in $C^{2, \alpha}(B_{R-1})$ are uniformly bounded. We can thus extract a subsequence $u_{n_{k}}$ which converges $C^{2}$ to some function $u_{\infty}$ on compact sets. Since $u_{\infty}$ is a $C^{2}$-limit of solutions of Equation (\ref{eq:meancurvature}), it is still a solution and its graph $S_{H}$ has constant mean curvature $H$. \\    
\indent The boundary at infinity of $S_{H}$ coincides with $\Gamma$ because it is the Hausdorff limit of the curves $\Gamma_{n}$, which converge to $\Gamma$, by construction.
\end{proof}

Moreover, we can deduce that the principal curvatures of the surface $(S_{H})_{\infty}$ are uniformly bounded, due to the following:  
\begin{lemma} Let $S$ be an $H$-surface embedded in $AdS_{3}$, which is the lift of a space-like compact surface embedded into a GHMC $AdS_{3}$ manifold. Then the principal curvatures $\mu$ and $\mu'$ of $S$ are bounded by some constant depending only on $H$. More precisely,
\[
	H\leq \lambda_{1} \leq \sqrt{H^{2}+1}+H \ \ \ \ \text{and} \ \ \ \ \ -\sqrt{H^{2}+1}+H \leq \lambda_{2} \leq H \ .
\]
\end{lemma}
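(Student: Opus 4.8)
The plan is to derive an algebraic constraint on the principal curvatures that forces them into the stated intervals, using that $S$ is the lift of a compact surface in a GHMC $AdS_3$ manifold and that such surfaces satisfy a Gauss equation coming from the ambient curvature $-1$. First I would write the shape operator $B$ in terms of its eigenvalues $\lambda_1 \geq \lambda_2$, so that $\trace(B) = \lambda_1 + \lambda_2 = 2H$ and $\det(B) = \lambda_1\lambda_2 = K_e$, where $K_e$ denotes the extrinsic curvature. The Gauss equation in $AdS_3$ reads $K_I = -1 - \det(B) = -1 - \lambda_1\lambda_2$, where $K_I$ is the intrinsic curvature of the induced metric on $S$. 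The key geometric input is that $S$, being the lift of a compact spacelike surface in a GHMC manifold, has an induced metric of nonpositive curvature — indeed, a compact spacelike surface in a GHMC $AdS_3$ manifold has genus at least $2$, and one can argue (via the maximum principle applied to the function $K_I$, or by invoking the known structure theory) that $K_I \leq 0$ at every point. Hence $\det(B) = \lambda_1 \lambda_2 = -1 - K_I \geq -1$.

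Given $\lambda_1 + \lambda_2 = 2H$ and $\lambda_1\lambda_2 \geq -1$, the two eigenvalues are the roots of $t^2 - 2Ht + \lambda_1\lambda_2 = 0$, so $\lambda_{1,2} = H \pm \sqrt{H^2 - \lambda_1\lambda_2}$. Since $\lambda_1\lambda_2 \geq -1$ we get $H^2 - \lambda_1\lambda_2 \leq H^2 + 1$, whence
\[
	\lambda_1 = H + \sqrt{H^2 - \lambda_1\lambda_2} \leq H + \sqrt{H^2+1}, \qquad \lambda_2 = H - \sqrt{H^2 - \lambda_1\lambda_2} \geq H - \sqrt{H^2+1} \ .
\]
The remaining bounds $\lambda_1 \geq H$ and $\lambda_2 \leq H$ are immediate since the square root term is nonnegative, i.e. $\lambda_2 \leq H \leq \lambda_1$ always. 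This gives exactly the claimed estimate, with the constant depending only on $H$.

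The step I expect to be the main obstacle is establishing rigorously that $K_I \leq 0$, equivalently that $\det(B) \geq -1$, for the lift of such a surface. One clean way is: at a point where $K_I$ attains its maximum on the compact quotient surface, the standard Bochner/Simons-type identity for the second fundamental form of an $H$-surface (where $H$ is constant) forces a sign condition; alternatively, one can observe that a compact spacelike surface in a GHMC $AdS_3$ manifold necessarily has negative Euler characteristic by Mess's theory, and combine this with the fact that the set where $K_I > 0$ would have to be all of $S$ or empty by a connectedness/maximum-principle argument applied to $\det(B)$. I would invoke whichever of these is cleanest given the tools already set up in the paper, most likely the maximum principle argument since the mean curvature is constant and the Codazzi equations then give good control on $\nabla B$.
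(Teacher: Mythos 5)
Your reduction is algebraically equivalent to the paper's: writing $\lambda_1=H+\lambda$, $\lambda_2=H-\lambda$ with $\lambda\geq 0$, the claimed bounds are exactly $\lambda^2\leq H^2+1$, i.e.\ $\det(B)=H^2-\lambda^2\geq -1$, i.e.\ $K_I\leq 0$ by the Gauss equation $K_I=-1-\det(B)$. So you have correctly isolated the one nontrivial step, and your first proposed completion (maximum principle plus a Codazzi-type identity at a maximum on the compact quotient) is precisely what the paper does. What is missing is the identity itself: let $B_0=B-HE$ be the traceless part of the shape operator; since $H$ is constant, $B_0$ is Codazzi, and writing its eigenvalues as $\pm\lambda$ and $\beta=\log\lambda$, the Codazzi equations in an orthonormal eigenframe read $\omega(e_1)=d\beta(e_2)$ and $\omega(e_2)=-d\beta(e_1)$ for the connection form $\omega$, whence $\Delta\beta=-K_I=-e^{2\beta}+H^2+1$ (with the convention that $\Delta\geq 0$ at a local maximum). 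Since $K_I=\lambda^2-H^2-1$ is increasing in $\lambda$, a maximum of $K_I$ is a maximum of $\beta$; evaluating the identity there gives $e^{2\beta}\leq H^2+1$, hence $\lambda\leq\sqrt{H^2+1}$ everywhere. Without this identity the ``sign condition'' you invoke at the maximum is not justified, so you should carry out this computation explicitly.

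Your second proposed completion does not work as stated: Gauss--Bonnet together with $\chi<0$ only gives $\int K_I<0$, hence $K_I<0$ somewhere, and there is no a priori dichotomy forcing the set $\{K_I>0\}$ to be empty or all of $S$ --- $\det(B)$ is not a sub- or supersolution of anything until you have the displayed identity, which at that point already finishes the proof on its own.
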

\begin{proof} Let $B$ be the shape operator of $S$. We consider $B_{0}=B-HE$, the traceless part of $B$ (here $E$ is the identity operator). Since $H$ is constant, the operator $B_{0}$ is Codazzi. Let $e_{1}$ and $e_{2}$ be tangent vectors in a orthonormal frame of $S$ that diagonalise $B_{0}$. Since $B_{0}$ is traceless, the eigenvalues are opposite, and we will denote with $\lambda\geq 0$ the eigenvalue of $e_{1}$. Let $\omega$ be $1$-form connection of the Levi-Civita connection $\nabla$ for the induced metric on $S$, defined by the relation
\[
	\nabla_{x}e_{1}=\omega(x)e_{2} \ .
\]
The Codazzi equation for $B_{0}$ can be read as follows,
\[
	\begin{cases} \lambda \omega(e_{1})=d\lambda(e_{2}) \\
				  \lambda \omega(e_{2})=-d\lambda(e_{1})  \ .
	\end{cases}
\]
If we define $\beta=\log(\lambda)$ we obtain
\[
	\begin{cases}  \omega(e_{1})=d\beta(e_{2}) \\
				   \omega(e_{2})=-d\beta(e_{1})
	\end{cases}
\]
Moreover, if we denote with $\K$ the Gaussian curvature of $S$, we have
\[
	-K=d\omega(e_{1}, e_{2})=e_{1}(\omega(e_{2}))-e_{2}(\omega(e_{1}))-\omega([e_{1}, e_{2}])=\Delta \beta \ , 
\]
where $\Delta$ is the Laplacian that is positive at the points of local maximum. On the other hand by the Gauss equation,
\[
	-K=\det(B)+1=\det(B_{0}+HE)+1=\det(B_{0})+H^{2}+1=-e^{2\beta}+H^{2}+1 \ .
\]
Since the surface $S$ is the lift of a compact surface, the function $\beta$ has maximum at a point $x_{0}$. By the fact that $\Delta \beta(x_{0})\geq0$, we deduce that
\[
	\lambda=e^{\beta}\leq \sqrt{H^{2}+1} \ .
\]
Since the eigenvalues of $B$ are $\mu=\lambda+H$ and $\mu'=-\lambda+H$, we obtain the claim.
\end{proof}

We have thus found for every value of $H \in \R$ a constant mean curvature surface $S_{H}$, with bounded principal curvatures, bounding a given quasi-circle $\Gamma$ at infinity. We conclude this section by showing that these surfaces provide a foliation of the domain of dependence $D(\Gamma)$.
\begin{prop} Let $S_{H}$ be the family of $H$-surfaces provided by Theorem \ref{thm:existenceHsurface} with boundary at infinity $\Gamma$. Then $\{S_{H}\}_{H \in \R}$ foliates the domain of dependence $D(\Gamma)$.
\end{prop}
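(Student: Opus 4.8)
The plan is to show three things: (1) the surfaces $S_H$ are pairwise disjoint and ordered consistently with $H$; (2) they exhaust $D(\Gamma)$; and (3) locally, the family varies nicely enough to give a genuine (topological) foliation. For the first point, I would pass to the approximating sequences: by Theorem~\ref{thm:foliationBBZ} the surfaces $(S_H)_n$ foliate $D(\Gamma_n)$, so for $H < H'$ the surface $(S_{H'})_n$ lies strictly in the future of $(S_H)_n$. Taking the $C^\infty_{loc}$ limits furnished by Theorem~\ref{thm:existenceHsurface}, we get that $S_{H'}$ lies in the (weak) future of $S_H$. To upgrade weak to strict, suppose $S_H$ and $S_{H'}$ touch at an interior point $x$; since both are graphs over $\h^2$ and $S_{H'}$ is everywhere in the causal future, they are tangent at $x$ with $S_{H'}$ above $S_H$ locally, and the geometric maximum principle for the (quasilinear elliptic) constant-mean-curvature equation~\eqref{eq:meancurvature} forces $H' \le H$, a contradiction. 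Hence the $S_H$ are pairwise disjoint and the map $H \mapsto S_H$ is monotone.

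For the exhaustion, let $p \in D(\Gamma)$. I would use the barriers from Proposition~\ref{prop:barriers} together with the convex-hull picture: $D(\Gamma)\setminus C(\Gamma)$ is foliated by the constant-Gauss-curvature surfaces $S_\K^\pm$ of \cite{BonSep}, and $C(\Gamma)$ is swept out by $S_0$ and the two pleated boundary components. Concretely, for $\kappa$ very negative the surfaces $S_\K^{+}$ approach the past boundary of $C(\Gamma)$ and $S_\K^{-}$ the future boundary, while as $\kappa \to -1$ they both converge to the maximal surface $S_0$; by Proposition~\ref{prop:barriers} the surface $S_H$ is trapped between $S_\K^{-}$ and $S_\K^{+}$ for a suitable $\kappa=\kappa(H)$, and one checks $\kappa(H)\to -\infty$ as $H \to \pm\infty$ (since the bound there was $\sqrt{-1-\kappa}>|H|$). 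Thus the surfaces $S_H$ for $H\in(-\infty,\infty)$ reach arbitrarily close to both boundary components of $C(\Gamma)$; combined with the fact that $S_0$ is the maximal surface in $C(\Gamma)$, every point of $D(\Gamma)$ lies on some $S_H$ or between two consecutive ones — and in fact, since $S_0$ already passes through $C(\Gamma)$ and the $S_H$ cover the two complementary regions densely near the boundary, the only way a point $p$ could be missed is if the monotone family "jumps" over it.

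To rule out a jump — equivalently, to show the map $H \mapsto S_H$ is \emph{continuous} and that its image sweeps out everything — I would argue as follows. Along the timelike line through $p$ orthogonal (in the product model $\h^2\times\R$) to a fixed spacelike plane, each $S_H$ meets the line in exactly one point at height $f(H)\in\R$, and $f$ is monotone by step (1). If $f$ had a jump at some $H_0$, one could extract from surfaces $S_{H_k}$ with $H_k \uparrow H_0$ a limit $S_\infty^-$ and from $H_k \downarrow H_0$ a limit $S_\infty^+$, both of constant mean curvature $H_0$ (by the same $C^2_{loc}$ compactness argument as in Theorem~\ref{thm:existenceHsurface}, whose gradient-function bounds~\eqref{stima} are uniform on compacta once the barrier region is fixed), both with asymptotic boundary $\Gamma$, and with $S_\infty^-$ strictly below $S_\infty^+$. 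But uniqueness of the $H_0$-surface with boundary $\Gamma$ and bounded principal curvatures — which is Theorem~\ref{thm:uniqueness}, or alternatively can be seen directly here since $S_{H_0}$ lies between them and the maximum principle would then force $S_\infty^- = S_{H_0} = S_\infty^+$ — gives a contradiction. Hence $f$ is continuous and strictly monotone, its image is an interval, and a final check using the barriers (that $f(H)$ runs over the whole fiber of $D(\Gamma)$ as $H$ ranges over $\R$) shows the $S_H$ are the leaves of a foliation, smooth transversally to the timelike direction.

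\emph{Main obstacle.} The delicate point is the continuity/no-jump step: it is where one really needs either the uniqueness theorem (Theorem~\ref{thm:uniqueness}, proved later, so citing it here risks circularity if that proof uses this proposition) or a self-contained maximum-principle argument trapping any limit leaf between $S_{H_0}$ and itself. I would take the latter route to keep the logical order clean. A secondary technical nuisance is verifying that the limits of the $S_{H_k}$ still have the correct asymptotic boundary $\Gamma$ and bounded second fundamental form, but this is handled exactly as in Theorem~\ref{thm:existenceHsurface} and the principal-curvature lemma above.
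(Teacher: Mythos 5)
Your step (1) (pairwise disjointness and monotone ordering via the approximating foliations of $D(\Gamma_n)$ plus the maximum principle) is exactly the paper's argument. The problem is in your exhaustion step, where the barrier argument runs in the wrong direction. Proposition \ref{prop:barriers} traps $S_H$ \emph{between} $S_{\K}^{-}$ and $S_{\K}^{+}$ for any $\K$ with $\sqrt{-1-\K}>|H|$; as $H\to\pm\infty$ this trapping region only \emph{grows}, so the proposition constrains where $S_H$ may lie but says nothing about the surfaces actually sweeping out to the boundary of $D(\Gamma)$. From ``$S_H$ is contained in an expanding region'' you cannot conclude ``every $p\in D(\Gamma)$ is hit,'' and your ``final check using the barriers that $f(H)$ runs over the whole fiber'' is precisely the statement that needs proof and is never supplied. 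What is needed is the reverse bound: for a fixed $p\in D(\Gamma)$, the mean curvatures $H_n$ of the leaves of the approximating foliations passing through points $p_n\to p$ must be bounded. The paper obtains this by an auxiliary construction you do not have: choose a space-like plane $P$ in the future of the dual plane $p^{*}$, with $\partial_\infty P$ disjoint from $\Gamma$ and $p\in D(P)\cap D(\Gamma)$; the equidistant surfaces from $P$ give an explicit CMC foliation $\Sigma_H=F_P^{-1}(H)$ of $D(P)$, the limit surface $S_\infty$ through $p$ meets $F_P^{-1}((-\infty,H_0))$ in a compact set, so $\inf_{S_n^{H_n}}F_P$ is attained at an interior tangency with some $\Sigma_{H_n^{-}}$, and the maximum principle (Lemma \ref{lm:maxprinciple}) gives $H_n\le H_n^{-}\le H_0$. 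Without this (or an equivalent) bound, your argument does not rule out that $\bigcup_H S_H$ is a proper, time-bounded subset of $D(\Gamma)$.

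A secondary issue is your no-jump step. Invoking Theorem \ref{thm:uniqueness} is circular, as you note; but your proposed replacement --- that ``$S_{H_0}$ lies between $S_\infty^{-}$ and $S_\infty^{+}$ and the maximum principle forces them to coincide'' --- does not work as stated, because Lemma \ref{lm:maxprinciple} requires a tangency point, and two disjoint ordered $H_0$-surfaces need not have one (forcing a tangency ``at infinity'' is exactly the hard content of Section \ref{sec:uniqueness}, which relies on the curvature estimates of Section \ref{sec:principalcurvatures}). The paper's no-jump argument avoids all of this: if $S_H'$ and $S_H''$ were distinct, an open set $U$ between them would, for large $n$, contain points of some leaf $S_{h_n}$ ($h_n\in\mathbb{Q}$) of the foliation of $D(\Gamma_n)$, and the ordering from step (1) forces $H_k'<h_n<H_k''$ for all $k$, which is impossible. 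You should replace your continuity argument with this squeezing of rational mean curvatures, and replace the barrier heuristic with the $F_P$ construction above.
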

\begin{proof} We first show that if $H_{1} < H_{2} \in \mathbb{Q}$, then $S_{H_{1}}$ and $S_{H_{2}}$ are disjoint and $S_{H_{1}}$ is in the past of $S_{H_{2}}$. By construction $S_{H_{1}}$ and $S_{H_{2}}$ are $C^{\infty}$ limits of the sequences $(S_{H_{1}})_{n}$ and $(S_{H_{2}})_{n}$ of constant mean curvature surfaces with boundary at infinity $\Gamma_{n}$ which is a graph of a quasi-symmetric homeomorphism that conjugates two cocompact Fuchsian groups. By Theorem \ref{thm:foliationBBZ}, they are leaves of the constant mean curvature foliation of $D(\Gamma_{n})$ and, in particular, $(S_{H_{1}})_{n}$ is in the past of $(S_{H_{2}})_{n}$ for every $n$. Hence, the same holds for $S_{H_{1}}$ and $S_{H_{2}}$. This shows that they cannot intersect transversely. Moreover, it is not possible that $S_{H_{1}}$ and $S_{H_{2}}$ are tangent at one point, since the trace of the shape operator of $S_{H_{2}}$ is bigger than the trace of the shape operator of $S_{H_{1}}$ and this would contradict the Maximum Principle (see Lemma \ref{lm:maxprinciple}). \\
\indent We now show that if we take two sequence of rational numbers $H_{k}'$ converging increasingly to $H\in \R$ and $H_{k}''$ converging decreasingly to $H\in \R$, then $S_{H_{k}'}$ and $S_{H_{k}''}$ converge to the same limit $S_{H}$. We first notice that the limits 
\[
	S_{H}'=\lim_{H_{k}' \to H}S_{H_{k}'} \ \ \ \ \ \ \text{and} \ \ \ \ \ S_{H}''=\lim_{H_{k}'' \to H}S_{H_{k}''}
\]
exist by a similar argument as in the proof of Theorem \ref{thm:existenceHsurface}. Moreover, $S_{H}'$ must be in the past of $S_{H}''$. Suppose by contradiction that $S_{H}'$ and $S_{H}''$ are distinct. Let $U$ be an open set contained in the past of $S_{H}''$ and in the future of $S_{H}'$. Since we know that the domain of dependence of $\Gamma_{n}$ is foliated by constant mean curvature surfaces, for $n$ large enough there exists a surface $S_{h_{n}}$ with constant mean curvature $h_{n} \in \mathbb{Q}$ and boundary at infinity $\Gamma_{n}$. By the uniform convergence on compact sets, for $n$ larger than some $n_{0}$, we must have $h_{n}<H_{k}''$ and $h_{n}>H_{k}'$ for every $k \in \N$, which gives the contradiction.\\
\indent So far we have proved that the $H$-surfaces $S_{H}$ provide a foliation of a subset of $D(\Gamma)$. We need to prove that  
\[
	\bigcup_{H \in \R} S_{H}= D(\Gamma) \ .
\]
Suppose by contradiction that there exists a point $p \in D(\Gamma)$ which does not lie in any of the surfaces $S_{H}$. Since the domain of dependence of $\Gamma_{n}$ converges to the domain of dependence of $\Gamma$, there exists a sequence of points $p_{n} \in D(\Gamma_{n})$ converging to $p$. Since $D(\Gamma_{n})$ is foliated by constant mean curvature surfaces, there exists a sequence $H_{n} \in \R$ such that $p_{n}\in S_{n}^{H_{n}} \subset D(\Gamma_{n})$. We claim that the sequence $H_{n}$ is bounded. We can assume that $H_{n}$ is positive for $n$ big enough. Since $p \in D(\Gamma)$, the boundary at infinity of the dual plane $p^{*}$ is disjoint from $\Gamma$. We choose a space-like plane $P$ in the future of $p^{*}$ with the following properties: the boundary at infinity of $P$ is disjoint from $\Gamma$ and $p \in D(P) \cap D(\Gamma)$. Since $\Gamma_{n}$ converges to $\Gamma$ in the Hausdorff topology, the asymptotic boundary of $P$ is disjoint from $\Gamma_{n}$ for $n$ big enough. Moreover, the surfaces $S_{n}^{H_{n}}$ converge to a nowhere time-like surface $S_{\infty}$ passing at $p$, because they are graphs of uniformly Lipschitz functions (see Section \ref{sec:model}). We notice that there exists a surface $\Sigma_{H_{0}}$ with constant mean curvature $H_{0}$ and boundary at infinity $\partial_{\infty}P$, such that $\Sigma_{H_{0}}$ intersects $S_{\infty}$ in a compact set and $p \in I^{-}(\Sigma_{H_{0}})$. This surface $\Sigma_{H_{0}}$ is obtained by taking equidistant surfaces from the space-like plane $P_{0}$. Let $F_{P}: D(P) \rightarrow \R$ the time function defined on the domain of dependence of $P$ such that each level set $F^{-1}(H)=\Sigma_{H}$ is a constant mean curvature surface with asymptotic boundary $\partial_{\infty}P$. It follows by construction that $F_{P}^{-1}((-\infty, H_{0})) \cap S_{\infty}$ is compact. Since the sequence $S_{n}^{H_{n}}$ converges uniformly on compact set to $S_{\infty}$, the same is true for $F_{P}^{-1}((-\infty, H_{0})) \cap S_{n}^{H_{n}}$, for $n$ sufficiently big. We define
\[
	H_{n}^{-}=\inf_{x \in S_{n}^{H_{n}}} F_{P}(x) \ .
\]
By the previous remarks $H_{n}^{-}\leq H_{0}$ for every $n$ sufficiently large, and it is assumed at some point $x_{n} \in S_{n}^{H_{n}}$. By construction, $S_{n}^{H_{n}}$ is tangent to $\Sigma_{H_{n}^{-}}$ at the point $x_{n}$ and $S_{n}^{H_{n}}$ is contained in the future of $\Sigma_{H_{n}^{-}}$. Therefore, by the Maximum Principle, we deduce that
\[
	H_{n} \leq H_{n}^{-} \leq H_{0}
\]
as claimed.
Therefore, there exist two real numbers $H^{+}$ and $H^{-}$ such that $S_{n}^{H_{n}}$ is in the past of $S_{n}^{H^{+}}$ and in the future of $S_{n}^{H^{-}}$ for every $n$. But the sequences $S_{n}^{H^{+}}$ and $S_{n}^{H^{-}}$ converge, by Theorem \ref{thm:existenceHsurface}, to constant mean curvature surfaces $S_{H^{+}}$ and $S_{H^{-}}$ with boundary at infinity $\Gamma$. But this implies that $p$ is contained in the subset of $D(\Gamma)$ foliated by the surfaces $S_{H}$ and this gives the contradiction. 
\end{proof}
This completes the proof of Theorem \ref{thm:existencefoliation}.

\section{Study of the principal curvatures of an $H$-surface}\label{sec:principalcurvatures}
The aim of this section is to give precise estimates for the principal curvatures of an $H$-surface with bounded second fundamental form. These results will then be used in Section \ref{sec:application} in order to associate to each surface in the foliation of the domain of dependence of a quasi-circle a quasi-conformal extension of the corresponding quasi-symmetric homeomorphism.\\
\\
\indent The main tool used to estimate the principal curvatures of an $H$-surface with bounded second fundamental form is the following compactness result for sequences of $H$-surfaces with bounded second fundamental form. This is a straightforward generalization of Lemma 5.1 in \cite{maxsurface}.

\begin{lemma}\label{lm:stability} Let $C>0$ be a fixed constant. Choose a point $x_{0} \in AdS_{3}$ and a future-oriented unit time-like vector $n_{0}\in T_{x_{0}}AdS_{3}$. There exists $r>0$ as follows. Let $P_{0}$ be the totally geodesic space-like plane orthogonal to $n_{0}$ at $x_{0}$. Let $D_{0}$ be the disk of radius $r$ centered at $x_{0}$ in $P_{0}$. Let $S_{n}$ be a sequence of $H$-surfaces 
containing $x_{0}$ and orthogonal to $n_{0}$ with second fundamental form bounded by $C$. After extracting a sub-sequence, the restrictions of $S_{n}$ to the cylinders above $D_{0}$ converge $C^{\infty}$ to an $H$-surface with boundary contained in the cylinder over $\partial D_{0}$. 
\end{lemma}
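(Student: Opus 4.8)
The plan is to reduce everything to a standard elliptic compactness argument, carried out in the graph coordinates provided by the totally geodesic plane $P_{0}$. First I would fix the point $x_{0}$ and the unit time-like vector $n_{0}$, and work in the product model of a neighborhood of $x_{0}$: choosing $P_{0}$ orthogonal to $n_{0}$ at $x_{0}$, one gets an identification of a neighborhood of $x_{0}$ with $D \times (-a,a) \subset \h^{2} \times \R$, with $x_{0}$ corresponding to the origin. The key preliminary point is that the uniform bound $C$ on the second fundamental form of the $S_{n}$, together with the normalization that each $S_{n}$ passes through $x_{0}$ orthogonally to $n_{0}$, yields a uniform \emph{a priori} size $r>0$ of a disk $D_{0}$ over which \emph{every} $S_{n}$ is a graph: indeed, a surface tangent to $P_{0}$ at $x_{0}$ with $\|B_{S_{n}}\| \leq C$ cannot bend away from $P_{0}$ too fast, so its tangent plane stays uniformly space-like (hence the gradient function $v_{S_{n}}$ stays uniformly bounded) on a controlled neighborhood. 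This is exactly the mechanism of Lemma 5.1 in \cite{maxsurface}, and here the only change is that the mean curvature equation \eqref{eq:meancurvature} has the constant $H$ on the left-hand side instead of $0$; since $H$ is fixed this affects none of the estimates.

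Next, writing $S_{n}$ as the graph of $u_{n}: D_{0} \to \R$, the functions $u_{n}$ satisfy the quasilinear elliptic PDE \eqref{eq:meancurvature} with $H$ constant. The uniform bound on $v_{S_{n}}$ (equivalently, on $\|\nabla u_{n}\|$ away from the space-like degeneracy) makes this equation uniformly elliptic with coefficients of uniformly bounded $C^{k}$ norm on a slightly smaller disk, for every $k$; combined with the uniform $C^{0}$ and $C^{1}$ bounds on $u_{n}$, Schauder theory gives uniform $C^{2,\alpha}$ bounds, and then bootstrapping gives uniform $C^{k,\alpha}$ bounds on every compact subdisk. By Arzel\`a--Ascoli one extracts a subsequence $u_{n_{j}}$ converging in $C^{\infty}_{\mathrm{loc}}$ to some $u_{\infty}$ on (a fixed fraction of) $D_{0}$; passing to the limit in \eqref{eq:meancurvature}, $u_{\infty}$ still solves the $H$-surface equation, so its graph is an $H$-surface, it passes through $x_{0}$, is orthogonal to $n_{0}$ there, and its boundary lies in the cylinder over $\partial D_{0}$. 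Shrinking $r$ once at the start so that all these steps fit together gives the stated $r$.

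I expect the only genuine obstacle to be the first step: establishing the uniform radius $r$ on which all the $S_{n}$ are simultaneously graphs over $P_{0}$, i.e.\ converting the intrinsic bound $\|B_{S_{n}}\|\le C$ into a uniform control on how far the tangent plane of $S_{n}$ can tilt relative to $P_{0}$ (and hence a uniform bound on $v_{S_{n}}$) before one can be sure the surface has not folded or exited a fixed coordinate chart. The clean way to do this is to compare $S_{n}$ with the totally geodesic plane $P_{0}$ via the second variation / the ODE satisfied by the angle between $\nu_{S_{n}}$ and $n_{0}$ along geodesics of $S_{n}$ emanating from $x_{0}$: the curvature bound forces this angle to grow at most at a controlled rate, so on a disk of some universal radius $r$ the surface stays uniformly space-like and graphical. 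Once this quantitative graphicality is in hand, the remainder is the routine elliptic machinery sketched above, and since the passage from $H=0$ to general constant $H$ changes nothing in these estimates, the proof is, as the statement says, a straightforward generalization of Lemma 5.1 in \cite{maxsurface}.
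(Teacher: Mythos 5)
Your proposal follows essentially the same route as the paper: represent each $S_{n}$ as a graph over $P_{0}$, use the second fundamental form bound together with the orthogonality normalization at $x_{0}$ to get a uniform radius $r$ on which the gradient stays uniformly away from the space-like degeneracy, then invoke elliptic regularity for the constant mean curvature equation \eqref{eq:meancurvature} and Arzel\`a--Ascoli to extract a $C^{\infty}$-convergent subsequence whose limit is again an $H$-surface. The only cosmetic difference is that the paper first extracts a $C^{1,1}$ limit using the uniform Hessian bound (which also comes from the bound on the second fundamental form) and then upgrades to $C^{\infty}$ by elliptic regularity applied to the weak limit, whereas you propose to establish uniform $C^{k,\alpha}$ bounds by Schauder bootstrap before passing to the limit; both are standard and equivalent here.
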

\begin{proof} For all $n$, the surface $S_{n}$ is the graph of a function $f_{n}$ over a totally geodesic plane $P_{n}$. The bound on the second form of $S_{n}$, along with the fact that $S_{n}$ is orthogonal to $n_{0}$ indicates that for some $r>0$, there exists $\epsilon>0$ such that
\[
	\| \nabla f_{n}\| \leq \frac{1-\epsilon}{\chi}
\]
on the disk of center $x_{0}$ and radius $r$. \\
Moreover, since the second fundamental form of $S_{n}$ are uniformly bounded, also the Hessian of $f_{n}$ are uniformly bounded by a constant depending on $r$.\\
\indent Therefore, we can extract a sub-sequence, still denoted with $f_{n}$, which converges $C^{1,1}$ to a function $f_{\infty}$ on the disk of center $x_{0}$ and radius $r$. We notice that, since the gradient of $f_{\infty}$ is uniformly bounded, the graph of $f_{\infty}$ is a space-like surface.\\
\indent By definition, the fact that $S_{n}$ are $H$-surfaces translates to the fact that $f_{n}$ is solution of Equation (\ref{eq:meancurvature}). Since $f_{\infty}$ is a $C^{1,1}$-limit of the sequence $f_{n}$, it is itself a weak solution of the same equation. By elliptic regularity, it follows that $f_{\infty}$ is $C^{\infty}$ and $f_{n}$ is actually converging to $f_{\infty}$ in the $C^{\infty}$ sense. Therefore, the graph of $f_{\infty}$ over the disk of radius $r$ is an $H$-surface, which is the $C^{\infty}$ limit of the restriction of the $H$-surfaces $S_{n}$ to the cylinder above the disk of radius $r$.
\end{proof}

We will need also a stability result for sequences of quasi-circles in $\partial_{\infty}AdS_{3}$. This will be a consequence of the following compactness property of quasi-symmetric homeomorphisms:
\begin{prop} Let $\phi_{n}:S^{1} \rightarrow S^{1}$ be a family of uniformly quasi-symmetric homeomorphisms of $S^{1}$, i.e. there exists a constant $M$ such that
\[
	\sup_{n}\sup_{Q}| \log |cr(\phi_{n}(Q))| | \leq M \ ,
\]
where the supremum is taken over all possible quadruples $Q$ of points in $S^{1}$ with $cr(Q)=-1$.
Then there exists a subsequence $\phi_{n_{k}}$ for which one of the following holds:
\begin{itemize}
	\item the homeomorphisms $\phi_{n_{k}}$ converge uniformly to a quasi-symmetric homeomorphism $\phi$;
	\item the homeomorphisms $\phi_{n_{k}}$ converge uniformly on the complement of any open neighborhood of a point of $S^{1}$ to a constant map.
\end{itemize}
\end{prop}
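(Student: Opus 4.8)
This is a standard normal-families type argument for quasi-symmetric maps, and I would run it as follows. First I would normalize: pick three points $p_1, p_2, p_3 \in S^1$ in cyclic order (say the cube roots of unity) and consider the renormalized sequence $\psi_n = A_n \circ \phi_n$ where $A_n \in \mathrm{PSL}(2,\mathbb{R})$ is the unique Möbius transformation of the disc sending $\phi_n(p_i)$ to $p_i$. Since $\mathrm{PSL}(2,\mathbb{R})$ acting on triples of distinct cyclically ordered points is transitive and the stabilizer of a triple is trivial, $A_n$ is well-defined; and since Möbius transformations preserve cross-ratios, the $\psi_n$ are uniformly quasi-symmetric with the same constant $M$. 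The point of this normalization is to prevent the mass of $\phi_n$ from escaping to a single point, which is exactly the second alternative in the statement — so I would need to argue that either the original $\phi_n(p_i)$ stay "spread out" (three of them don't collapse together), in which case the normalization works, or they do collapse, in which case the second alternative holds. Concretely: after passing to a subsequence, $\phi_n(p_i) \to q_i$ for $i=1,2,3$; if the $q_i$ are three distinct points the normalization is well-behaved; if at least two coincide, one shows using the quasi-symmetry bound that the image of the complement of a small neighborhood of one point collapses.

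The core estimate is an equicontinuity (and equi-uniform-continuity of the inverse) statement: a family of $M$-quasi-symmetric homeomorphisms that fixes three given points is equicontinuous. I would get this from the cross-ratio control. Given $x, y$ close together in $S^1$, choose a quadruple $Q = (x, y, z, w)$ with $cr(Q) = -1$, where $z, w$ are chosen among or near the normalization points so that $|x-z|, |y-w|$ are bounded below; then $cr(Q) = -1$ forces $|x - y|$ to be comparable to the "gap" between $x$ and the far points, and the bound $|\log|cr(\psi_n(Q))|| \le M$ forces $|\psi_n(x) - \psi_n(y)|$ to be small whenever $|x-y|$ is small, uniformly in $n$. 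The same argument applied to $\psi_n^{-1}$ (which is $M'$-quasi-symmetric for an $M'$ depending only on $M$) gives equicontinuity of the inverses, hence the family $\{\psi_n\}$ is uniformly equicontinuous with uniformly equicontinuous inverses. By Arzelà–Ascoli I extract a subsequence $\psi_{n_k} \to \psi$ uniformly with $\psi_{n_k}^{-1} \to \psi^{-1}$ uniformly; the uniform limit of homeomorphisms with equicontinuous inverses is again a homeomorphism, and passing to the limit in the inequality $|\log|cr(\psi_{n_k}(Q))|| \le M$ shows $\psi$ is itself $M$-quasi-symmetric. Undoing the normalization (the $A_n$ lie in a compact subset of $\mathrm{PSL}(2,\mathbb{R})$ in the non-degenerate case, so $A_n^{-1}$ subconverges) gives $\phi_{n_k} \to \phi$ uniformly with $\phi$ quasi-symmetric, which is the first alternative.

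The main obstacle — and the only genuinely delicate point — is the dichotomy itself: ruling out the "escape of compactness" unless the degenerate second alternative occurs, and proving that in the degenerate case the convergence to a constant is \emph{uniform on the complement of a neighborhood}. For this I would argue by contradiction: if no three of the limit points $q_i$ are distinct, then all the $\phi_n$ images of the normalization triple accumulate near a single point $q \in S^1$; the quasi-symmetry bound, applied to quadruples with two points near $q$ and two points $x, y$ ranging over a compact set $K \subset S^1 \setminus \{q'\}$ (where $q'$ is the preimage limit point), then forces $|\phi_n(x) - \phi_n(y)| \to 0$ uniformly for $x, y \in K$, i.e. $\phi_n|_K$ converges uniformly to a constant. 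Care is needed to track which of the points plays the role of $q$ versus $q'$ and to handle the cyclic ordering, but the mechanism is just the same cross-ratio squeeze used for equicontinuity, read in the contrapositive direction.
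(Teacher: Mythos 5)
The paper itself offers no proof of this proposition: it is stated as a known compactness property of uniformly quasi-symmetric homeomorphisms (the quickest complete argument goes through the Beurling--Ahlfors or Douady--Earle extension, compactness of normalized $K$-quasiconformal self-maps of the disc, and the classical dichotomy for non-normalized $K$-quasiconformal families). So there is nothing in the paper to compare your argument against; your normalization-plus-Arzel\`a--Ascoli outline is the standard direct route and is the right skeleton.

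There are, however, two concrete soft spots. First, the equicontinuity step: with the paper's convention $cr(x_1,x_2,x_3,x_4)=\frac{(x_4-x_1)(x_3-x_2)}{(x_2-x_1)(x_3-x_4)}$, a quadruple with $cr(Q)=-1$ in which two points are very close necessarily has another pair of points close as well (the model quadruple is $(0,1,2,\infty)$; shrinking $|x_2-x_1|$ forces $|x_3-x_2|$ to shrink), so you cannot take $x,y$ close together and $z,w$ ``far away near the normalization points'' and still have $cr(Q)=-1$. The correct mechanism is the interval-doubling form of the inequality: for two adjacent arcs of equal length the condition bounds the ratio of the image lengths, and iterating over dyadic scales yields a uniform H\"older modulus of continuity for the normalized family. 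Second, and more seriously, the degenerate case: from ``at least two of the three limit points $q_i$ coincide'' you conclude that \emph{all} the images $\phi_n(p_i)$ accumulate at a single point. That is false: take $\phi_n$ to be hyperbolic M\"obius transformations with fixed points $a,b$ and translation length $n$; these are uniformly quasi-symmetric (with $M=0$), $\phi_n\to b$ uniformly away from $a$, yet $\phi_n(a)=a$ stays put, so if $p_3=a$ only two of the three image points collapse while the second alternative of the proposition still holds. The exceptional point $q'$ need not be visible from the normalization triple; what one must actually prove is that if two \emph{disjoint} arcs both have image diameter bounded below along a subsequence then the limit is a homeomorphism, and otherwise there is a single point $q'$ off of which all image diameters tend to $0$ uniformly. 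Your cross-ratio squeeze can be made to do this, but the case analysis has to be organized around locating $q'$, not around the three points $q_i$.
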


In terms of Anti-de Sitter geometry, the above proposition can be translated as follows:
\begin{prop}\label{prop:stabilityquasicircle} Let $\Gamma_{n}$ be a sequence of uniformly quasi-circles, i.e. $\Gamma_{n}$ are graphs of a family of unifomly quasi-symmetric homeomorphisms. Then, there exists a subsequence $\Gamma_{n_{k}}$ which converges in the Hausdorff topology either to the boundary of a light-like plane or to a quasi-circle $\Gamma_{\infty}$.
\end{prop}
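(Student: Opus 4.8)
The plan is to deduce this statement directly from the compactness property of quasi-symmetric homeomorphisms stated in the previous proposition, together with the dictionary (established in Section \ref{sec:extensions}) between acausal curves in $\partial_{\infty}AdS_{3}$, graphs of circle homeomorphisms, and the two projections $\pi_{l}, \pi_{r}$. Write $\Gamma_{n}$ as the graph of the homeomorphism $\phi_{n}=\pi_{r}\circ\pi_{l}^{-1}$; by hypothesis the $\phi_{n}$ are uniformly quasi-symmetric, so the previous proposition applies after passing to a subsequence, and we are in one of its two alternatives.

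In the first alternative, $\phi_{n_{k}}\to\phi$ uniformly with $\phi$ quasi-symmetric. Since the identification of $\partial_{\infty}AdS_{3}$ with $S^{1}\times S^{1}$ is a homeomorphism, uniform convergence of the maps $\phi_{n_{k}}$ implies that their graphs converge in the Hausdorff topology to the graph of $\phi$, which is by definition a quasi-circle $\Gamma_{\infty}$. (Uniform convergence of graphs of continuous functions on a compact space to the graph of the limit is elementary once one notes that the $\Gamma_{n_{k}}$ stay in the compact set $S^{1}\times S^{1}$, so no subsequential escape can occur.)

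In the second alternative, $\phi_{n_{k}}$ converges uniformly on the complement of every open neighborhood of some point $p\in S^{1}$ to a constant $q\in S^{1}$. Then the graphs $\Gamma_{n_{k}}$ converge in the Hausdorff topology to the set $(\{p\}\times S^{1})\cup(S^{1}\times\{q\})$: away from the fiber over $p$ the graph collapses to the horizontal circle $S^{1}\times\{q\}$, and the part over shrinking neighborhoods of $p$, being connected and joining $(p,q)$ to itself through all intermediate heights, accumulates onto the whole vertical circle $\{p\}\times S^{1}$. This union of a left leaf and a right leaf through the single point $(p,q)\in\partial_{\infty}AdS_{3}$ is precisely the boundary at infinity of a light-like (degenerate) plane, as described in Section \ref{sec:model}. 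This establishes the claimed dichotomy.

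The only genuinely delicate point is the identification, in the degenerate case, of the Hausdorff limit with $\partial_{\infty}$ of a light-like plane: one must check both inclusions — that every accumulation point of the $\Gamma_{n_{k}}$ lies in $(\{p\}\times S^{1})\cup(S^{1}\times\{q\})$, which follows from the locally uniform convergence to the constant, and conversely that every point of the vertical circle $\{p\}\times S^{1}$ is attained as a limit, which uses that each $\Gamma_{n_{k}}$ is a connected curve separating the two rulings and hence its trace over a neighborhood of $p$ sweeps through all second coordinates. Once this is in place, recalling from Section \ref{sec:model} that a light-like plane is dual to a point of $\partial_{\infty}AdS_{3}$ and that its boundary at infinity is exactly the union of the left and right leaves through that point finishes the proof.
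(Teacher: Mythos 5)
Your proposal is correct and follows exactly the route the paper intends: the paper states this proposition as a direct translation of the preceding compactness result for uniformly quasi-symmetric homeomorphisms and offers no further argument, so your write-up simply supplies the details of that translation (Hausdorff convergence of graphs in the non-degenerate case, and identification of the degenerate limit $(\{p\}\times S^{1})\cup(S^{1}\times\{q\})$ with the boundary of the light-like plane dual to $(p,q)$). The point you flag as delicate --- that the graph over shrinking neighborhoods of $p$ sweeps out the whole leaf $\{p\}\times S^{1}$ because each $\phi_{n_{k}}$ is a surjective homeomorphism --- is indeed the only step requiring care, and you handle it correctly.
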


Under particular assumptions, we can guarantee that the limit of a sequence of quasicircle is never the boundary of a light-like plane:
\begin{lemma}\label{lm:stabilityquasicircle}Let $S$ be a space-like surface whose asymptotic boundary is a quasi-circle $\Gamma$. Suppose that there exists $\epsilon \in (-\pi/2, \pi/2)$ such that the surface $S_{\epsilon}$ at time-like distance $\epsilon$ from $S$ is convex. Fix a point $x_{0}\in AdS_{3}$ and a future-directed unit vector $n_{0}\in T_{x_{0}}AdS_{3}$. Let $x_{n}$ be a sequence of points in $S$ and let $\phi_{n}$ be a sequence of isometries of $AdS_{3}$ such that $\phi_{n}(x_{n})=x_{0}$ and the future-directed normal vector to $S$ at $x_{n}$ is sent to $n_{0}$. Then no subsequences of $\phi_{n}(\Gamma)$ converge to the boundary of a light-like plane.
\end{lemma}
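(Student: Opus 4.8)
The plan is to argue by contradiction. Suppose, after passing to a subsequence, that $\phi_n(\Gamma)$ converges in the Hausdorff topology to the boundary $\partial_\infty P_\infty$ of a light-like plane $P_\infty$. By Proposition \ref{prop:stabilityquasicircle} the only two possibilities for the limit of the uniformly quasi-circles $\phi_n(\Gamma)$ are a genuine quasi-circle or the boundary of a light-like plane, so the content to rule out is precisely the light-like case. The key geometric input is the hypothesis on $S_\epsilon$: since $\phi_n$ is an isometry sending $x_n$ to $x_0$ and the normal of $S$ at $x_n$ to $n_0$, the surfaces $\phi_n(S)$ all pass through $x_0$ with common normal $n_0$, and likewise $\phi_n(S_\epsilon)$ are convex surfaces at time-like distance $\epsilon$ from them, all passing through the fixed point $\phi_n(x_\epsilon^{(n)})$ at distance $\epsilon$ from $x_0$ along the normal $n_0$, with a fixed normal there as well. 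Convexity is preserved by isometries, so each $\phi_n(S_\epsilon)$ is a convex space-like surface through a fixed point with fixed tangent plane, whose asymptotic boundary is $\phi_n(\Gamma)$.

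Next I would extract a limit of the convex surfaces $\phi_n(S_\epsilon)$. Because they are convex, space-like, and pinned at a common point with a common tangent plane, their graphs over $\partial_\infty P_0$ (for a fixed space-like plane $P_0$ through $x_0 = $ pinned point's projection) are uniformly Lipschitz, so a subsequence converges uniformly on compact sets to a convex, nowhere time-like surface $S_\epsilon^\infty$ passing through the pinned point with the prescribed tangent plane. The asymptotic boundary of $S_\epsilon^\infty$ must then be the Hausdorff limit of the asymptotic boundaries $\phi_n(\Gamma)$, hence equal to $\partial_\infty P_\infty$. But a convex space-like surface whose asymptotic boundary is contained in the boundary of a light-like plane must itself be contained in that light-like plane (its convex hull is contained in the light-like plane, which is degenerate), and then $S_\epsilon^\infty$ would be light-like — contradicting that it is a uniform limit of uniformly space-like graphs and passes through the pinned point with a space-like tangent plane $T_{x_0}$-translated. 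Equivalently: the interior of the convex region bounded by $S_\epsilon^\infty$ would be empty, forcing the surface to degenerate to a subset of a light-like plane, which is incompatible with the fact that the tangent plane at the pinned point is the fixed space-like plane orthogonal to a fixed time-like vector.

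The main obstacle I expect is making the last step rigorous: precisely controlling how the convex surface $S_\epsilon^\infty$ relates to the light-like plane $P_\infty$, and showing that having asymptotic boundary $\partial_\infty P_\infty$ forces degeneracy. The cleanest route is probably to note that $P_\infty$ is a support plane: since $\partial_\infty P_\infty$ is the asymptotic boundary of the convex surface $S_\epsilon^\infty$ and $P_\infty$ is light-like, the convex hull of $\partial_\infty P_\infty$ is $P_\infty$ itself, so $S_\epsilon^\infty \subseteq P_\infty$; then the tangent plane to $S_\epsilon^\infty$ at the pinned point would be contained in (a translate of) the light-like plane and hence degenerate, whereas by construction it is the space-like plane orthogonal to $n_0$ at the pinned point. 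This contradiction completes the argument. One should also verify that the pinning keeps the limit surface from escaping to infinity — that is guaranteed because the pinned point and its tangent data are fixed once and for all, so the limit surface is nonempty and genuinely passes through that point.
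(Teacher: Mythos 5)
Your contradiction setup and your identification of the convex equidistant surface $S_\epsilon$ as the key player are both right, but the two steps that carry the actual weight of the argument have gaps. First, you assert that the asymptotic boundary of the limit surface $S_\epsilon^\infty$ equals the Hausdorff limit of the curves $\phi_n(\Gamma)$. Convergence of the surfaces uniformly on compact sets does not control their behaviour at infinity, so this identification is not automatic; in the paper this kind of statement is only ever obtained with extra input (barriers, or convexity exploited through support planes), and you supply none here. Second, and more seriously, you justify $S_\epsilon^\infty\subseteq P_\infty$ by saying that a convex surface must lie in the convex hull of its asymptotic boundary. That is false: a convex surface with asymptotic boundary $\Lambda$ bounds a convex set \emph{containing} $C(\Lambda)$, but need not be contained in $C(\Lambda)$ --- the constant curvature surfaces $S_\K^{\pm}$ used throughout the paper are convex and foliate $D(\Lambda)\setminus C(\Lambda)$, i.e.\ they live entirely outside the convex hull. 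So the degeneracy of $C(\partial_\infty P_\infty)$ does not by itself force the limit surface to degenerate.

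The paper's proof sidesteps both problems by never taking a limit of the surfaces at all. Since $\phi_n$ sends $x_n$ to $x_0$ and the normal to $n_0$, the point $x_n'\in S_\epsilon$ obtained by the normal flow is sent to a \emph{fixed} point $x_0'$, and the tangent plane to $\phi_n(S_\epsilon)$ there is a \emph{fixed} space-like plane $P_0$. Convexity enters only through the fact that $P_0$ is a support plane of $\phi_n(S_\epsilon)$, hence $\partial_\infty P_0$ is disjoint from $\phi_n(\Gamma)$ for every $n$. If $\phi_n(\Gamma)$ converged to the boundary of a light-like plane, whose boundary at infinity consists of the two ruling lines through a point $\xi$, then (since the boundary of any space-like plane meets each ruling line) disjointness forces $\xi\in\partial_\infty P_0$, and one concludes that $x_0'$ fails to lie in $D(\phi_n(\Gamma))$ for large $n$ --- contradicting $x_n'\in D(\Gamma)$. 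If you want to keep your limit-surface strategy, you would have to replace your convex-hull step by exactly this support-plane argument at the pinned point, at which point the limit surface becomes superfluous.
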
 
\begin{proof} We can choose an affine chart such that $x_{0}=(0,0,0)\in \R^{3}$ and $n_{0}=(1,0,0)$. Let $x_{n}' \in S_{\epsilon}$ be the point corresponding to $x_{n}$ under the normal flow. In particular $d(x_{n}, x_{n}')=\epsilon$ for every $n$. Since $\phi_{n}$ is an isometry, we deduce that $\phi_{n}(x_{n}')=x_{0}'=(\arcsin(\epsilon),0,0)$. Moreover, if we denote with $P_{n}$ the totally geodesic space-like plane tangent to $S_{\epsilon}$ at $x_{n'}$, by construction $\phi_{n}(P_{n})=P_{0}$, where $P_{0}$ is the totally geodesic space-like plane through $x_{0}'$ orthogonal to $n_{0}$. Notice that, since $S_{\epsilon}$ is equidistant from $S$, they have the same asymptotic boundary $\Gamma$ and $S_{\epsilon}$ is contained in the domain of dependence of $\Gamma$ \\
\indent Suppose by contradiction that there exists a subsequence, still denoted with $\phi_{n}(\Gamma)$, which converges to the boundary of a light-like plane. Let $\xi\in \partial_{\infty}AdS_{3}$ be the self-intersecion point of the boundary at infinity of this light-like plane. Since $S_{\epsilon}$ is convex, $P_{n}$ is a support plane, hence its boundary at infinity is disjoint from $\Gamma$. After applying the isometry $\phi_{n}$, this implies that $P_{0}$ is disjoint from $\phi_{n}(\Gamma)$ for every $n$. We deduce that $\xi$ lies in $P_{0}$. But this implies that for $n$ big enough, the point $x_{0}'$ is not contained in the domain of dependence of $\phi_{n}(\Gamma)$, which is impossible because each $x_{n}'$ is contained in the domain of dependence of $\Gamma$ for every $n$.
\end{proof}

\indent We have now all the tools to study the principal curvature of an $H$-surface with bounded second fundamental form. In Section \ref{sec:existence}, we have seen that if we express the principal curvatures of an $H$-surface as $\pm \lambda+H$, and we define $\mu=\log(\lambda)$, then $\mu$ satisfies the differential equation
\begin{equation}\label{eq:ODEprincipalcurvature}
	\Delta \mu= e^{2\mu}-H^{2}-1  \ .
\end{equation}
\\
\indent The main result of this section is the following:

\begin{prop}\label{prop:boundGausscurvature} Let $S$ be an $H$-surface with bounded principal curvatures. If its boundary at infinity is a quasi-circle then $S$ is uniformly negatively curved.
\end{prop}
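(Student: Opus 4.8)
The plan is to argue by contradiction using the compactness machinery just assembled. Suppose the conclusion fails: there is an $H$-surface $S$ with bounded principal curvatures, whose asymptotic boundary is a quasi-circle $\Gamma$, but which is not uniformly negatively curved. By the Gauss equation $-K = \det(B)+1 = -e^{2\mu}+H^{2}+1$, non-uniform negativity of the curvature means there is a sequence of points $x_{n}\in S$ along which $e^{2\mu(x_{n})}\to H^{2}+1$, i.e.\ $\mu(x_{n})\to \frac{1}{2}\log(H^{2}+1)$ from below (recall $\mu \le \frac12\log(H^2+1)$ always, from the estimate in Section~\ref{sec:existence}), so that $K(x_{n})\to 0$. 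The idea is to renormalize: apply isometries $\psi_{n}$ of $AdS_{3}$ carrying $x_{n}$ to a fixed basepoint $x_{0}$ and the future-directed normal of $S$ at $x_{n}$ to a fixed future-directed unit vector $n_{0}$, and pass to the limit of the surfaces $\psi_{n}(S)$ and of the boundary curves $\psi_{n}(\Gamma)$.

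First I would set up the convergence. Since $S$ has principal curvatures bounded by a constant $C$, so do all the $\psi_{n}(S)$, and they all pass through $x_{0}$ orthogonally to $n_{0}$; by Lemma~\ref{lm:stability}, after extracting a subsequence the restrictions of $\psi_{n}(S)$ to the cylinder over a fixed disk $D_{0}$ converge $C^{\infty}$ to an $H$-surface $S_{\infty}$ through $x_{0}$. At the same time the curves $\psi_{n}(\Gamma)$ are graphs of uniformly quasi-symmetric homeomorphisms — uniformly, because a bound on the second fundamental form of $S$ translates (via the construction recalled in Section~\ref{sec:extensions}, where $d\Pi_{l}=E+JB$, $d\Pi_{r}=E-JB$) into a uniform quasi-conformality bound on the associated extensions, hence a uniform quasi-symmetry constant that is preserved by the isometries $\psi_{n}$. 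By Proposition~\ref{prop:stabilityquasicircle}, $\psi_{n}(\Gamma)$ converges, up to subsequence, either to a quasi-circle $\Gamma_{\infty}$ or to the boundary of a light-like plane. The key point is to rule out the light-like degeneration: this is exactly where Lemma~\ref{lm:stabilityquasicircle} enters. To apply it I need that some equidistant surface $S_{\e}$ from $S$ is convex; this follows from the bound on the principal curvatures of $S$, since the principal curvatures of $S_{\e}$ are obtained from those of $S$ by an explicit $\e$-dependent Möbius-type transformation, and for $|\e|$ close enough to $\pi/2$ (of the appropriate sign) both become positive. Hence Lemma~\ref{lm:stabilityquasicircle} applies and $\psi_{n}(\Gamma)\to \Gamma_{\infty}$, a genuine quasi-circle.

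Now $S_{\infty}$ is an $H$-surface with bounded principal curvatures whose asymptotic boundary is the quasi-circle $\Gamma_{\infty}$ — here I would use that $S_{\infty}$, being a limit of space-like surfaces with uniformly bounded geometry and asymptotic to the converging curves $\psi_{n}(\Gamma)$, has asymptotic boundary $\Gamma_{\infty}$ (an argument of the same flavour as in the proof of Theorem~\ref{thm:existenceHsurface}, using barriers to control the surface near infinity). On such a surface the function $\mu$ satisfies $\Delta\mu = e^{2\mu}-H^{2}-1$. But by construction $\mu$ attains on $S_{\infty}$, at the point $x_{0}$, the value $\frac{1}{2}\log(H^{2}+1)$, which is its global supremum; at an interior maximum $\Delta\mu(x_{0})\le 0$, forcing $e^{2\mu(x_{0})}-H^{2}-1\le 0$, with equality, so $x_{0}$ is a point where $\mu$ equals its supremum and $\Delta\mu=0$ there. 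Applying the strong maximum principle to the elliptic equation satisfied by the non-positive function $\mu - \frac12\log(H^2+1)$ (whose zero set contains $x_{0}$), $\mu$ is constant $\equiv \frac{1}{2}\log(H^{2}+1)$ on $S_{\infty}$, i.e.\ $S_{\infty}$ is a flat surface. I would then derive a contradiction from $S_{\infty}$ being a complete flat $H$-surface asymptotic to a quasi-circle: a flat $H$-surface has a principal curvature identically zero, so it is foliated by light-like lines of $AdS_{3}$, and such a surface cannot have a quasi-circle (an acausal curve containing no light-like segment) as its asymptotic boundary — the light-like rulings would accumulate on light-like segments in $\Gamma_{\infty}$.

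The main obstacle I anticipate is the last step together with the control of the asymptotic boundary of the limit surface $S_{\infty}$: one must be sure that $S_{\infty}$ is complete and that its boundary at infinity is precisely $\Gamma_{\infty}$ rather than some smaller set, and then extract the structural contradiction from flatness. The uniform quasi-symmetry of $\psi_n(\Gamma)$ and the non-degeneracy supplied by Lemma~\ref{lm:stabilityquasicircle} are what make this controllable, but turning "flat $H$-surface asymptotic to a quasi-circle" into an outright contradiction is the delicate point and is where I would spend most of the effort.
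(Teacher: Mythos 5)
Your overall strategy --- blow up along a sequence of points where the curvature approaches its supremum, use Lemma \ref{lm:stability} to extract a limit $H$-surface with an interior maximum of $\mu$, apply the maximum principle to $\Delta\mu=e^{2\mu}-H^{2}-1$, and then exclude the flat case by looking at asymptotic boundaries --- is the same as the paper's. But your exclusion of the flat case has two genuine problems. First, the appeal to Lemma \ref{lm:stabilityquasicircle} is circular: you claim a convex equidistant surface $S_{\e}$ exists ``from the bound on the principal curvatures of $S$'', but this is false. Writing the principal curvatures as $\pm\lambda+H$, those of $S_{\rho}$ are $\tan(\arctan(\lambda+H)-\rho)$ and $\tan(\arctan(H-\lambda)-\rho)$, and a value of $\rho$ making both positive exists precisely when $\arctan(\lambda+H)-\arctan(H-\lambda)<\pi/2$, i.e.\ when $\lambda^{2}<1+H^{2}$, i.e.\ when $S$ is negatively curved --- exactly the conclusion you are trying to prove. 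This is why the paper establishes Proposition \ref{prop:existenceconvexsurface} only \emph{after} Proposition \ref{prop:boundGausscurvature}, deducing its hypothesis from Remark \ref{oss:estimateprincipalcurvature}. (A related, milder slip: the estimate $\mu\leq\tfrac12\log(H^{2}+1)$ from Section \ref{sec:existence} is proved there only for lifts of compact equivariant surfaces, so you cannot quote it for a general $S$; your limit argument re-derives it, so this is harmless.) Second, your structural claim about the flat limit is wrong: by the Gauss equation $K=0$ forces $\det B=-1$, so the principal curvatures are $H\pm\sqrt{1+H^{2}}$, both nonzero; a flat $H$-surface is not ruled by light-like lines, and the contradiction you sketch in the last step does not materialize.

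The paper's way around both issues is more economical. The flat limit is identified explicitly as the surface at time-like distance $\tfrac12\arctan(-H)$ from the maximal horosphere, whose asymptotic boundary consists of four light-like segments. On the other hand the curves $\phi_{n}(\Gamma)$ are uniformly quasi-symmetric (isometries act by M\"obius transformations on each factor of $\partial_{\infty}AdS_{3}\cong S^{1}\times S^{1}$, so the quasi-symmetry constant is preserved --- no detour through quasi-conformal extensions is needed), hence by Proposition \ref{prop:stabilityquasicircle} any subsequential limit of $\phi_{n}(\Gamma)$ is either a quasi-circle or the boundary of a single light-like plane, and neither of these is a curve made of four light-like segments. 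No convex equidistant surface and no Lemma \ref{lm:stabilityquasicircle} are required. If you replace your last two steps by this observation, your argument closes.
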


We will first show that an $H$-surface bounding a quasi-circle cannot be flat. In case $H=0$, a flat maximal surface in $AdS_{3}$ was described in \cite{maxsurface} as a maximal horosphere in $AdS_{3}$. It turns out that for general $H$, flat constant mean curvature surfaces are equidistant surfaces from this maximal horosphere. \\

\indent Let us first recall the construction of the maximal horosphere. Consider a space-like line $l$ in $AdS_{3}$ and its dual line $l^{\perp}$, which is obtained as the intersection of totally geodesic planes dual to points of $l$. We recall that $l^{\perp}$ can also be described as the set of points at distance $\pi/2$ from $l$. The maximal horosphere $S_{0}$ is defined as the  set of points at distance $\pi/4$ from $l$. It can be easily checked that $S_{0}$ is flat. Moreover, for every point $x\in S_{0}$, the surface $S_{0}$ has an orientation-reversing and time-reversing isometry obtained by reflection along a plane $P$ tangent to a point $x \in S_{0}$, followed by a rotation of angle $\pi/2$ around the time-like geodesic orthogonal to $P$ at $x$. This shows that the principal curvatures of $S_{0}$ must be opposite to each other, hence $S_{0}$ is a maximal surface. We then deduce by the Gauss formula that the principal curvatures are necessarily $\pm1$ at every point. We notice that the boundary at infinity of $S_{0}$ consists of four light-like segments, hence $S_{0}$ does not bound a quasi-circle.\\
\\
\indent We are now going to prove that flat $H$-surfaces can be obtained as surfaces at constant distance from the maximal horosphere. We will need the following well-known formulas for the variation of the induced metric and the shape operator in a foliation by equidistant surfaces.
\begin{prop}[Lemma 1.14 in \cite{Seppimaxsurface}]\label{prop:formulasequidistantsurfaces} Let $S$ be a space-like surface in $AdS_{3}$ with induced metric $I$ and shape operator $B$. Let $S_{\rho}$ be the surface at time-like distance $\rho$ from $S$, obtained by following the normal flow. Then the induced metric on the surface $S_{\rho}$ is given by 
\[
	I_{\rho}=I((\cos(\rho)E+\sin(\rho)B)\cdot, (\cos(\rho)E+\sin(\rho)B)\cdot) \ .
\]
Moreover, the shape operator of $S_{\rho}$ is given by
\[
	B_{\rho}=(\cos(\rho)E+\sin(\rho)B)^{-1}(-\sin(\rho)E+\cos(\rho)B) \ .
\]
\end{prop}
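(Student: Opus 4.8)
\emph{The plan is to compute both formulas directly in the quadric model $\widehat{AdS}_{3}=\{x\in\mathbb{R}^{4}\ |\ \langle x,x\rangle_{2,2}=-1\}$, where the normal exponential flow has a closed trigonometric form.} Fix a space-like surface $S$ with future unit normal $\nu$, regarded along $S$ as an $\mathbb{R}^{4}$-valued map satisfying $\langle\nu,\nu\rangle_{2,2}=-1$, $\langle\nu,x\rangle_{2,2}=0$ and $\langle\nu,T_{x}S\rangle_{2,2}=0$. In $\widehat{AdS}_{3}$ the time-like geodesic with $\gamma_{x}(0)=x$ and $\dot\gamma_{x}(0)=\nu(x)$ is $\gamma_{x}(\rho)=\cos(\rho)\,x+\sin(\rho)\,\nu(x)$, so the surface $S_{\rho}$ at time-like distance $\rho$ is the image of the map $\Phi_{\rho}(x)=\cos(\rho)\,x+\sin(\rho)\,\nu(x)$. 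Coordinate-free, $d\Phi_{\rho}(v)$ is the Jacobi field along $\gamma_{x}$ taking value $v$ at $\rho=0$, and since $AdS_{3}$ has sectional curvature $-1$ and $\gamma_{x}$ is time-like this Jacobi field solves $J''+J=0$; this is the source of the $\cos$ and $\sin$ in the statement, and one could run the whole argument intrinsically this way.

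For the induced metric, differentiate $\Phi_{\rho}$: for $v\in T_{x}S$ one gets $d\Phi_{\rho}(v)=\cos(\rho)\,v+\sin(\rho)\,D_{v}\nu$, where $D$ denotes the flat connection on $\mathbb{R}^{4}$. Differentiating the identities $\langle\nu,\nu\rangle_{2,2}=-1$ and $\langle\nu,x\rangle_{2,2}=0$ in the direction $v$ and using $\langle\nu,v\rangle_{2,2}=0$ shows that $D_{v}\nu$ is orthogonal to both $\nu$ and $x$; hence $D_{v}\nu\in T_{x}S$ and, by the Gauss formula for the inclusion $\widehat{AdS}_{3}\hookrightarrow\mathbb{R}^{4}$ (whose correction term is proportional to $\langle v,\nu\rangle_{2,2}=0$), it coincides with $\nabla_{v}\nu$, i.e. with $\pm B(v)$. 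Tracking this sign and the orientation in which $\rho$ is measured gives $d\Phi_{\rho}=\cos(\rho)E+\sin(\rho)B$ as a map $T_{x}S\to T_{\Phi_{\rho}(x)}S_{\rho}$, whence
\[
I_{\rho}(v,w)=\langle d\Phi_{\rho}(v),d\Phi_{\rho}(w)\rangle_{2,2}=I\big((\cos(\rho)E+\sin(\rho)B)v,\ (\cos(\rho)E+\sin(\rho)B)w\big),
\]
which is the first formula.

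For the shape operator, the future unit normal to $S_{\rho}$ at $\Phi_{\rho}(x)$ is $\nu_{\rho}=\dot\gamma_{x}(\rho)=-\sin(\rho)\,x+\cos(\rho)\,\nu(x)$. Extending $\nu_{\rho}$ along the flow and computing its derivative in a direction $d\Phi_{\rho}(v)$ tangent to $S_{\rho}$, the same computation (differentiating the defining quadratic relations) shows the flat derivative is already tangent to $S_{\rho}$, so that $\nabla_{d\Phi_{\rho}(v)}\nu_{\rho}=-\sin(\rho)\,v+\cos(\rho)\,D_{v}\nu=(-\sin(\rho)E+\cos(\rho)B)(v)$ after matching signs. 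Hence $B_{\rho}(d\Phi_{\rho}(v))=(-\sin(\rho)E+\cos(\rho)B)(v)$, and composing with $(d\Phi_{\rho})^{-1}=(\cos(\rho)E+\sin(\rho)B)^{-1}$ — which makes sense wherever $\cos(\rho)E+\sin(\rho)B$ is invertible, in particular for $\rho$ small, and which commutes with everything in sight since all these operators are polynomials in $B$ — gives $B_{\rho}=(\cos(\rho)E+\sin(\rho)B)^{-1}(-\sin(\rho)E+\cos(\rho)B)$, the second formula.

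Since this is a cited lemma, there is no genuine obstacle: the argument is a direct computation in the quadric model. The only points that require care are (a) verifying that $D_{v}\nu$ and $\nabla_{d\Phi_{\rho}(v)}\nu_{\rho}$ have no component normal to $\widehat{AdS}_{3}$ or to $S_{\rho}$ — which follows automatically from differentiating $\langle\nu,\nu\rangle_{2,2}=-1$ and $\langle\nu,x\rangle_{2,2}=0$ — and (b) reconciling the orientation of the normal flow and the sign convention $B=-\nabla_{X}\nu$ with those of \cite{Seppimaxsurface}, so that the $\pm\sin$ and $\pm\cos$ terms appear exactly as in the statement.
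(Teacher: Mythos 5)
The paper does not prove this proposition at all: it is imported verbatim as Lemma 1.14 of \cite{Seppimaxsurface}, so there is no internal argument to compare yours against. Your direct computation in the quadric model is the standard proof of this fact and is correct in structure: the time-like unit-speed geodesics of $\widehat{AdS}_{3}$ really are $\rho\mapsto\cos(\rho)x+\sin(\rho)\nu$, your verification that $D_{v}\nu$ is tangent to $S$ (by differentiating $\langle\nu,\nu\rangle_{2,2}=-1$ and $\langle\nu,x\rangle_{2,2}=0$ and using $\langle\nu,v\rangle_{2,2}=0$) is exactly what makes $d\Phi_{\rho}$ a polynomial in $B$, and the orthogonality of $\dot\gamma_{x}(\rho)$ to $d\Phi_{\rho}(T_{x}S)$ (Gauss lemma, or a one-line pairing computation) justifies taking $\dot\gamma_{x}(\rho)$ as the normal of $S_{\rho}$. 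The commutation remark that lets you write $B_{\rho}$ with the inverse on the left is also right. The one caveat is the sign bookkeeping you defer to step (b): with the paper's convention $B(X)=-\nabla_{X}\nu$ and $\nu$ future-directed, the flow $\cos(\rho)x+\sin(\rho)\nu$ produces $d\Phi_{\rho}=\cos(\rho)E-\sin(\rho)B$, so to land on the stated formula you must either flow in the past direction or measure $\rho$ with the opposite orientation; since the statement itself does not fix the direction of the normal flow, this is genuinely a convention to be pinned down rather than an error, but a complete write-up should state explicitly which choice is being made rather than saying the signs can be "matched". With that single clarification the argument is complete.
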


If we apply the previous proposition to the maximal horosphere $S_{0}$, we obtain that, choosing a local orthogonal frame that diagonalises $B$, the induced metrics and the shape operator of the surface at time-like distance $\rho$ from $S_{0}$ can be written as
\vspace{-0.5cm}
\begin{multicols}{2}
\[
	\ \ \ \ \ I_{\rho}=\begin{vmatrix}
				\cos(\rho)+\sin(\rho) & 0\\
				  \   &  \  \\
				0 & \cos(\rho)-\sin(\rho)\\
			\end{vmatrix} \ \ \ \ \ \ 
	B_{\rho}=\begin{vmatrix}
				\frac{-\sin(\rho)+\cos(\rho)}{\cos(\rho)+\sin(\rho)} & 0\\
				\ & \ \\
				0 & \frac{-\sin(\rho)-\cos(\rho)}{\cos(\rho)-\sin(\rho)}\\
			\end{vmatrix} \ .
\]
\end{multicols}
We deduce that equidistance surfaces form $S_{0}$ are smooth for $\rho\in (-\pi/4, \pi/4)$ and, for every value of $\rho$ in this interval, the surface $S_{\rho}$ is flat. Namely, by the Gauss formula, 
\[
	\K_{S_{\rho}}=-1-\det(B_{\rho})=-1+1=0 \ .
\]
Moreover, since the shape operator $B_{\rho}$ has constant trace
\[
	\trace(B_{\rho})=\frac{-\sin(\rho)+\cos(\rho)}{\cos(\rho)+\sin(\rho)}+\frac{-\sin(\rho)-\cos(\rho)}{\cos(\rho)-\sin(\rho)}=-2\tan(2\rho)
\]
we deduce that a flat surface with constant mean curvature $H$ is the surface at time-like distance $\rho=\frac{1}{2}\arctan(-H)$ from the maximal horosphere. Notice that, since these surfaces are at bounded distance from the maximal horosphere, they have the same boundary at infinity, which, we recall, consists of four light-like segments.\\
\\
\indent The proof of Proposition \ref{prop:boundGausscurvature} will then follow from the above description of flat $H$-surfaces by applying the Maximum Principle "at infinity":

\begin{proof}[Proof of Proposition \ref{prop:boundGausscurvature}]  Since $S$ has bounded second fundamental form, its Gaussian curvature is bounded. We will denote with $\K_{\sup}$ the upper bound of the Gaussian curvature of $S$. \\
\indent If $\K_{\sup}$ is attained, then the maximum principle applied to Equation (\ref{eq:ODEprincipalcurvature}) implies that $\K_{\sup}\leq 0$ and if $\K_{\sup}=0$, then the surface is flat. This latter case cannot happen, since by hypothesis $S$ bounds a quasi-circle, hence, if the upper bound of the Gaussian curvature is attained, the surface $S$ is uniformly negatively curved. \\
\indent We will know apply the Maximum Principle "at infinity" to get the same conclusion when the upper bound is not attained. Consider a sequence of points $x_{n}\in S$ such that the Gaussian curvature of $S$ at $x_{n}$ satisfies
\[
	\K_{\sup}-\frac{1}{n} \leq \K(x_{n})\leq \K_{\sup} \ .
\]
Let $\phi_{n}$ be a sequence of isometries of $AdS_{3}$ which sends $x_{n}$ to a fixed point $x_{0}$ and the future-directed unit normal vector to $S$ at $x_{n}$ to a fixed vector $n_{0}\in T_{x_{0}}AdS_{3}$. Since $S$ has bounded second fundamental form, Lemma \ref{lm:stability} shows that $\phi_{n}(S)$ converges, up to subsequences, to an $H$-surface $S_{\infty}$ in a neighborhood of $x_{0}$. By construction, the curvature of $S_{\infty}$ has a local maximum at $x_{0}$ equal to $\K_{\sup}$, hence the maximum principle applied again to Equation (\ref{eq:ODEprincipalcurvature}) shows that $\K_{\sup}\leq 0$. \\
\indent We are left to show that $\K_{\sup}\neq 0$. For this we will need to use that the boundary at infinity of $S$ is a quasi-circle and the description of flat constant mean curvature surfaces. Suppose by contradiction that $\K_{\sup}=0$. Then the sequence $\phi_{n}(S)$ converges in a neighborhood of $x_{0}$ to the flat $H$-surface $S_{\infty}$ described above. Moreover, Lemma \ref{lm:stability} implies that $\phi_{n}(S)$ converges, up to subsequences, to $S_{\infty}$ uniformly on compact set. In particular, the boundary at infinity of $\phi_{n}(S)$ converges to the boundary at infinity of $S_{\infty}$, which consists of four light-like segments. But this is not possible by Proposition \ref{prop:stabilityquasicircle}.
\end{proof}

\begin{oss}\label{oss:estimateprincipalcurvature} Proposition \ref{prop:boundGausscurvature} and the Gauss formula imply that the principal curvatures of an $H$-surface with bounded second fundamental form and asymptotic boundary a quasi-circle can be written as $\pm \lambda +H$, where $\lambda \in [0, \sqrt{H^{2}+1}-\epsilon]$ for some $\epsilon>0$. 
\end{oss}

We conclude this section with the following result about the existence of a convex surface equidistant from a negatively curved $H$-surface. This is a crucial step, together with Lemma \ref{lm:stabilityquasicircle}, to prove the uniqueness of the constant mean curvature foliation of the domain of dependence of a quasi-circle.

\begin{prop}\label{prop:existenceconvexsurface} Let $S$ be a negatively curved $H$-surface with bounded second fundamental form. Then there exists a convex surface equidistant from $S$.
\end{prop}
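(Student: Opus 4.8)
The plan is to obtain the desired convex surface as an equidistant surface $S_{\rho}$ from $S$, for a suitable constant $\rho$, by computing its shape operator via Proposition~\ref{prop:formulasequidistantsurfaces}. The starting point will be Remark~\ref{oss:estimateprincipalcurvature}: since $S$ is a (uniformly) negatively curved $H$-surface with bounded second fundamental form --- which in the case of interest is guaranteed by Proposition~\ref{prop:boundGausscurvature} --- its principal curvatures can be written $\mu_{1}=\lambda+H$ and $\mu_{2}=-\lambda+H$, with $\lambda$ taking values in a fixed compact interval $[0,M]$, where $M=\sqrt{H^{2}+1}-\e$ for some $\e>0$ independent of the point.

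The key device is to encode principal curvatures as angles. I would write $\mu_{i}=\tan\theta_{i}$ with $\theta_{i}\in(-\pi/2,\pi/2)$; the bound $\lambda\in[0,M]$ then forces $\theta_{2}\le\theta_{1}$ and $\theta_{1},\theta_{2}\in[\Theta_{-},\Theta_{+}]$ at every point, where $\Theta_{\pm}=\arctan(H\pm M)$ are fixed. Substituting $B$ in diagonal form into $B_{\rho}=(\cos\rho\,E+\sin\rho\,B)^{-1}(-\sin\rho\,E+\cos\rho\,B)$ and using the addition formulas for sine and cosine, a short computation gives that the principal curvatures of $S_{\rho}$ are precisely $\tan(\theta_{i}-\rho)$. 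Consequently $S_{\rho}$ will be smooth and convex (positive definite shape operator) as soon as $\rho$ is chosen so that $\theta_{i}-\rho\in(0,\pi/2)$ for every $i$ and every point of $S$; by the uniform bounds above this reduces to the single condition $\Theta_{+}-\pi/2<\rho<\Theta_{-}$.

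So the proof comes down to checking that this range for $\rho$ is non-empty, namely $\Theta_{+}-\Theta_{-}<\pi/2$ (any such $\rho$ then also lies in $(-\pi/2,\pi/2)$, since $\Theta_{-}\in(-\pi/2,\pi/2)$). This is exactly where the \emph{strict} inequality $M<\sqrt{H^{2}+1}$ --- i.e.\ the uniform negative curvature of $S$ --- is needed: the function $M\mapsto\arctan(H+M)-\arctan(H-M)$ is strictly increasing, and at $M=\sqrt{H^{2}+1}$ it equals $\pi/2$ (there $\tan\Theta_{+}\tan\Theta_{-}=H^{2}-(H^{2}+1)=-1$, which forces $\Theta_{+}-\Theta_{-}=\pi/2$), so for $M=\sqrt{H^{2}+1}-\e$ one gets $\Theta_{+}-\Theta_{-}<\pi/2$. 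I will fix any $\rho$ with $\Theta_{+}-\pi/2<\rho<\Theta_{-}$ and then check that $\theta_{i}-s$ remains in $(-\pi/2,\pi/2)$ for every $s$ between $0$ and $\rho$, so that no focal point of $S$ is crossed and $S_{\rho}$ is a genuine embedded space-like surface; by the angle formula its shape operator is positive definite, hence $S_{\rho}$ is convex.

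I do not expect a serious obstacle: the only delicate point is the uniformity of the estimate --- that one constant $\rho$ works over all of $S$ --- and this is exactly what the uniform bound $\lambda\le\sqrt{H^{2}+1}-\e$ of Remark~\ref{oss:estimateprincipalcurvature} provides. The remaining ingredients are the trigonometric identity for the shape operator of an equidistant surface and the elementary monotonicity of $M\mapsto\arctan(H+M)-\arctan(H-M)$.
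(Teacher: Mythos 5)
Your proposal is correct and follows essentially the same route as the paper: write the principal curvatures as $\tan\theta_i$ with $\theta_i$ confined to $[\arctan(H-M),\arctan(H+M)]$, use Proposition~\ref{prop:formulasequidistantsurfaces} to see that the principal curvatures of $S_\rho$ are $\tan(\theta_i-\rho)$, and choose $\rho$ in the interval $(\arctan(H+M)-\pi/2,\ \arctan(H-M))$, whose non-emptiness is exactly what the strict bound $M<\sqrt{1+H^2}$ guarantees. Your verification of that non-emptiness (monotonicity of $M\mapsto\arctan(H+M)-\arctan(H-M)$ together with $\tan\Theta_+\tan\Theta_-=-1$ at $M=\sqrt{1+H^2}$) is a slightly cleaner packaging of the explicit inequality the paper checks, but the argument is the same.
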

\begin{proof} We will do the proof for $H\geq 0$, the other case being analogous. Let us write the principal curvatures of $S$ as $\mu=\lambda +H$ and $\mu'=-\lambda+H$. By Remark \ref{oss:estimateprincipalcurvature}, we know that $\lambda\in [0, \sqrt{1+H^{2}}-\epsilon]$ for some $\epsilon>0$. We can clearly suppose $\epsilon<1$. By Proposition \ref{prop:formulasequidistantsurfaces}, we know that the principal curvatures of the surface $S_{\rho}$ at time-like distance $\rho$ from $S$ can be expressed as
\[
	\mu_{\rho}=\frac{\mu-\tan(\rho)}{1+\mu\tan(\rho)}=\tan(\rho_{0}-\rho)  
\]
\[
	\mu_{\rho}'=\frac{\mu'-\tan(\rho)}{1+\mu'\tan(\rho)}=\tan(\rho_{1}-\rho)
\]
where $\rho_{0}=\arctan(\mu)$ and $\rho_{1}=\arctan(\mu')$. We will denote 
\[
	\alpha=\arctan\big(\sqrt{1+H^{2}}+H-\epsilon\big) \ \ \ \beta=\arctan(H) \ \ \ \gamma=\arctan\big(H-\sqrt{1+H^{2}}+\epsilon\big) \ .
\]
By definition, we have
\[
	0< \beta < \rho_{0} < \alpha < \frac{\pi}{2} \ \ \ \ \gamma <\rho_{1}<\beta < \frac{\pi}{2} \ .
\]
We deduce that for every $\rho\in (\alpha-\pi/2, 0]$ the surface $S_{\rho}$ is smooth, since the principal curvatures are non-degenerate. Moreover, for every $\rho\in (\alpha-\pi/2, \gamma)$, the surface $S_{\rho}$ has positive principal curvatures. Here, we should be careful that $\alpha-\pi/2 <\gamma$, but this can easily be verified to be true under the assumption $0<\epsilon\leq 1$. Namely, $\alpha-\pi/2< \gamma$ if and only if
\[
	\arctan\big(H-\sqrt{1+H^{2}}+\epsilon\big)>\arctan\big(\sqrt{1+H^{2}}+H-\epsilon\big)-\frac{\pi}{2}=\arctan\left(\frac{-1}{H+\sqrt{1+H^{2}}-\epsilon}\right)
\] 
and it is sufficient to verify that 
\[
	\epsilon+H-\sqrt{1+H^{2}}>\frac{-1}{H+\sqrt{1+H^{2}}-\epsilon} \ ,
\]
which is true, under the hypothesis that $0<\epsilon\leq 1$, since
\[
	-1<-(1-\epsilon)^{2}\leq -\epsilon^{2}+2\epsilon\sqrt{1+H^{2}}-1=(\epsilon+H-\sqrt{1+H^{2}})(H-\epsilon+\sqrt{1+H^{2}}) \ .
\]
\end{proof}

\section{Uniqueness of the CMC foliation}\label{sec:uniqueness}
In Section \ref{sec:existence} we have proved the existence of a foliation by constant mean curvature surfaces of the domain of dependence of a quasi-circle. In this section we prove the uniqueness of such a foliation. \\
\\
\indent In order to prove the uniqueness of the foliation provided by Theorem \ref{thm:existencefoliation}, the main idea consists of proving that a constant mean curvature surface with bounded second fundamental form and with boundary at infinity a quasi-circle must coincide with a leaf of the foliation. The main tool we will use is the Maximum Principle for constant mean curvature surfaces in Lorentzian manifold, which we are going to recall.

\begin{lemma}[Maximum Principle, Lemma 2.3 in \cite{CMCfoliation}]\label{lm:maxprinciple} Let $\Sigma$ and $\Sigma'$ be smooth space-like surfaces in a time-oriented Lorentzian manifold $M$. Assume that $\Sigma$ and $\Sigma'$ are tangent at some point $p$, and assume that $\Sigma'$ is contained in the future of $\Sigma$. Then, the mean curvature of $\Sigma'$ at $p$ is smaller or equal than the mean curvature of $\Sigma$ at $p$.
\end{lemma}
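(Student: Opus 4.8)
The plan is to reduce the statement to a local computation in a suitable coordinate system adapted to the tangent point $p$. First I would choose coordinates on a neighborhood $U$ of $p$ in $M$ so that $p$ is the origin, the tangent plane $T_p\Sigma=T_p\Sigma'$ is the horizontal plane $\{x_3=0\}$, and the future-directed unit normal $\nu$ at $p$ points in the positive $x_3$-direction. Since both surfaces are spacelike and tangent at $p$, in a small enough neighborhood they can be written as graphs over $T_p\Sigma$: $\Sigma=\{x_3=f(x_1,x_2)\}$ and $\Sigma'=\{x_3=g(x_1,x_2)\}$, with $f(0)=g(0)=0$ and $\nabla f(0)=\nabla g(0)=0$. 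The hypothesis that $\Sigma'$ lies in the future of $\Sigma$ translates, after orienting time so that increasing $x_3$ is future-directed near $p$, into the pointwise inequality $g\geq f$ on a neighborhood of $0$ (one must check that "future of $\Sigma$" indeed corresponds to the upper side of the graph, which follows because $\nu$ is future-directed and transverse to $\Sigma$).

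Next I would recall that for a spacelike graph $x_3=h(x_1,x_2)$ the mean curvature is given by a quasilinear second-order elliptic operator $\mathcal{Q}(h) = a^{ij}(x,h,\nabla h)\,\partial_i\partial_j h + b(x,h,\nabla h)$, where the matrix $a^{ij}$ is positive definite (ellipticity is exactly the spacelike condition, cf. the discussion around Equation~\eqref{eq:meancurvature} and the gradient bound in Section~\ref{sec:model}), and $\mathcal{Q}(f)=\mathcal{Q}(g)=H$ if $\Sigma,\Sigma'$ both had the same constant mean curvature. In the present generality the two surfaces need not have equal mean curvature; instead I set $w=g-f\geq 0$, with $w(0)=0$ and $\nabla w(0)=0$, so that $0$ is an interior minimum of $w$. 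Subtracting the two mean-curvature expressions and using the mean value theorem on the coefficients, $w$ satisfies a linear elliptic inequality of the form
\[
	\tilde a^{ij}\,\partial_i\partial_j w + \tilde b^i\,\partial_i w = H_{\Sigma'}(x) - H_{\Sigma}(x)
\]
along the graphs, where $\tilde a^{ij}$ is positive definite near $0$. Evaluating at the minimum point $x=0$: there $\nabla w(0)=0$, and the Hessian of $w$ at an interior minimum is positive semidefinite, so $\tilde a^{ij}(0)\,\partial_i\partial_j w(0)\geq 0$; hence the left-hand side is $\geq 0$ at $0$, giving $H_{\Sigma'}(p)\geq H_{\Sigma}(p)$ — wait, this is the wrong sign, so I would be careful to fix the sign conventions for the mean curvature relative to the future-directed normal (with the convention $B(X)=-\nabla_X\nu$ used in the paper, the ordering of "future" versus the sign of $H$ must be pinned down on the model case of two umbilic equidistant planes), and conclude that in fact $H_{\Sigma'}(p)\leq H_{\Sigma}(p)$.

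The main obstacle I anticipate is purely bookkeeping rather than conceptual: getting the sign conventions consistent among (i) the choice of future-directed normal, (ii) the definition $B=-\nabla_\bullet\nu$ and $H=\tfrac12\operatorname{trace}(B)$, (iii) which side of the graph is "the future of $\Sigma$", and (iv) the sign in front of the Hessian at an interior minimum. The cleanest way to lock these down is to test everything on the explicit family of equidistant totally geodesic (or constant-mean-curvature) planes in $AdS_3$ described in Section~\ref{sec:principalcurvatures}, where $H_\rho=-\tan(2\rho)$ or similar is known explicitly and "future" is unambiguous; once the conventions match on that model family, the general inequality follows from the elliptic argument above. No delicate PDE estimates are needed — only interior ellipticity and the first- and second-order conditions at an interior extremum of $w=g-f$.
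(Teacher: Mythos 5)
The paper does not actually prove this lemma: it is quoted verbatim as Lemma 2.3 of \cite{CMCfoliation} and used as a black box, so there is no in-paper argument to compare against. Your proposal is the standard (and correct) tangency-comparison proof, essentially the one in \cite{CMCfoliation}: graph both surfaces over the common tangent plane, observe that ``$\Sigma'$ in the future of $\Sigma$'' gives $w=g-f\geq 0$ with an interior minimum at the tangency point, and apply the second-derivative test to the difference of the two quasilinear mean-curvature operators. The only delicate point is the one you flag yourself, namely the sign, and it resolves the way you suspect: with the paper's conventions ($\nu$ future-directed, $B=-\nabla_{\bullet}\nu$, $H=\tfrac12\trace B$), the second fundamental form at a critical point of the graph function $h$ is $II(\partial_i,\partial_j)=\langle\nu,\nabla_{\partial_i}\partial_j\rangle=-\partial_i\partial_j h\,\langle\nu,\nu\rangle^{1/2}\cdot(\ldots)$, i.e.\ $H=-\tfrac12\,a^{ij}\partial_i\partial_j h+\ldots$ with $a^{ij}$ positive definite, so $H_{\Sigma'}(p)-H_{\Sigma}(p)=-\tfrac12\,a^{ij}(0)\,\partial_i\partial_j w(0)\leq 0$ since the Hessian of $w$ at an interior minimum is positive semidefinite. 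Your proposed calibration on the equidistant family of Section~\ref{sec:principalcurvatures}, where $\trace(B_{\rho})=-2\tan(2\rho)$ is decreasing in $\rho$ near $0$ and larger $\rho$ means further in the future, confirms this direction. Two minor bookkeeping remarks: the linearized inequality for $w$ also carries a zeroth-order term $\tilde c\,w$ coming from the dependence of the coefficients on $h$ itself, which is harmless only because $w(0)=0$ at the point of evaluation; and the comparison is between the mean curvatures at the two (coinciding) points above $x=0$, so no issue arises from the surfaces sitting at different heights elsewhere. With those points noted, your argument is complete.
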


We have now all the instruments to prove the uniqueness of the foliation by constant mean curvature surfaces of the domain of dependence of a quasi-circle.
\begin{teo}\label{thm:uniqueness} Let $\Gamma$ be a quasi-circle and let $D(\Gamma)$ be its domain of dependence. Then there exists a unique foliation of $D(\Gamma)$ by constant mean curvature surfaces with bounded second fundamental form.
\end{teo}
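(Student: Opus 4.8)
The plan is to show that any $H$-surface $\Sigma$ with bounded second fundamental form and asymptotic boundary the quasi-circle $\Gamma$ must coincide with the leaf $S_H$ of the foliation constructed in Theorem \ref{thm:existencefoliation}. Since existence is already established, this pins down the foliation uniquely. First I would note that, by the results of Section \ref{sec:principalcurvatures}, $\Sigma$ is uniformly negatively curved (Proposition \ref{prop:boundGausscurvature}) and hence, by Proposition \ref{prop:existenceconvexsurface}, there exists $\rho$ such that the equidistant surface $\Sigma_\rho$ is convex; consequently $\Sigma$ (and $S_H$, to which the same discussion applies) satisfies the hypotheses of Lemma \ref{lm:stabilityquasicircle}. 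The point of this is to make the rescaling arguments work without the asymptotic boundary degenerating to the boundary of a light-like plane.

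The core is a maximum-principle-at-infinity argument comparing $\Sigma$ with $S_H$. Both are graphs over $\h^2$ (in the universal cover picture of Section \ref{sec:model}) of functions $u_\Sigma$ and $u_{S_H}$, and I would consider $\sup_{\h^2}(u_\Sigma - u_{S_H})$ and $\inf_{\h^2}(u_\Sigma - u_{S_H})$. Suppose for contradiction that $\Sigma\ne S_H$, so that, say, $s:=\sup(u_\Sigma - u_{S_H})>0$ (the infimum case being symmetric). If this supremum is attained at an interior point, then at that point $S_H$ lies (locally, and in fact globally in the time direction) in the past of $\Sigma$ while the two surfaces are tangent, and since both have mean curvature exactly $H$, the strong maximum principle (Lemma \ref{lm:maxprinciple}, in its equality-case/Hopf form applied to the quasi-linear equation \eqref{eq:meancurvature}) forces $u_\Sigma - u_{S_H}$ to be constant, contradicting that $\Sigma$ and $S_H$ have the same boundary at infinity $\Gamma$ (so $u_\Sigma - u_{S_H}\to 0$ at $\partial \h^2$, which also rules out $s$ being attained unless it is $0$). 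The substantive case is therefore when $s$ is a genuine "supremum at infinity": take $x_n\in \h^2$ with $(u_\Sigma - u_{S_H})(x_n)\to s$, apply isometries $\phi_n$ of $AdS_3$ sending the point of $\Sigma$ over $x_n$ and its normal to a fixed $(x_0,n_0)$, and use Lemma \ref{lm:stability} to extract limits $\Sigma_\infty$ and $(S_H)_\infty$ of $\phi_n(\Sigma)$ and $\phi_n(S_H)$; both are $H$-surfaces through $x_0$, and by Lemma \ref{lm:stabilityquasicircle} their asymptotic boundaries remain quasi-circles (in particular they are not degenerate). The function $u_{\Sigma_\infty}-u_{(S_H)_\infty}$ then attains the value $s$ as an interior maximum at $x_0$, and the interior maximum principle gives a contradiction exactly as before: either it is constant (impossible, since one checks that if $u_\Sigma-u_{S_H}$ were bounded away from $0$ the surfaces could not share asymptotic boundary, and a bit of care shows the limit difference cannot be a nonzero constant), or the strong maximum principle is violated outright. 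Hence $s=0$, and symmetrically $\inf(u_\Sigma-u_{S_H})=0$, so $\Sigma = S_H$.

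Finally, having shown every $H$-surface with bounded second fundamental form and asymptotic boundary $\Gamma$ is the leaf $S_H$, I would conclude: if $\mathcal{F}$ is any foliation of $D(\Gamma)$ by CMC surfaces with bounded second fundamental form, each leaf has some constant mean curvature $H$ and bounded second fundamental form; one checks its asymptotic boundary must be $\Gamma$ itself (a leaf of a foliation of $D(\Gamma)$ accumulates on $\partial_\infty D(\Gamma)=\Gamma$); so the leaf equals $S_H$, and $\mathcal{F}$ is the foliation of Theorem \ref{thm:existencefoliation}.

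\textbf{Main obstacle.} The delicate point is the maximum principle at infinity: ensuring that after rescaling by the $\phi_n$ the two limit surfaces $\Sigma_\infty$ and $(S_H)_\infty$ are genuinely distinct (so that the difference of their graph functions has a nontrivial interior maximum) and that their asymptotic boundaries do not collapse — this is precisely where Proposition \ref{prop:existenceconvexsurface} and Lemma \ref{lm:stabilityquasicircle} are needed — together with handling the equality case of the maximum principle for the quasi-linear CMC equation \eqref{eq:meancurvature} to upgrade a non-strict inequality to an actual contradiction.
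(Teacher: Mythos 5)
Your overall strategy --- reduce uniqueness to showing that any $H$-surface with bounded second fundamental form and asymptotic boundary $\Gamma$ coincides with the leaf $S_{H}$, handle the non-attained extremum by renormalizing with isometries $\phi_{n}$ and invoking Lemma \ref{lm:stability}, Lemma \ref{lm:stabilityquasicircle} and Proposition \ref{prop:existenceconvexsurface} to keep the limit boundary a quasi-circle --- is exactly the paper's. But the core comparison step is different, and as written it has a genuine gap.

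The gap is in the ``supremum at infinity'' case. The quantity you maximize, $u_{\Sigma}-u_{S_{H}}$, is the difference of graph functions in the fixed product coordinates $\h^{2}\times\R$ of Section \ref{sec:model}, and this quantity is \emph{not} invariant under the isometries $\phi_{n}$: a general isometry of $AdS_{3}$ does not preserve the fibers $\{x\}\times\R$ or the time coordinate $t$, so $\bigl(u_{\phi_{n}(\Sigma)}-u_{\phi_{n}(S_{H})}\bigr)$ evaluated over $x_{0}$ has no controlled relation to $(u_{\Sigma}-u_{S_{H}})(x_{n})$, and $\sup\bigl(u_{\phi_{n}(\Sigma)}-u_{\phi_{n}(S_{H})}\bigr)$ need not equal $s$. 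Consequently the limit function $u_{\Sigma_{\infty}}-u_{(S_{H})_{\infty}}$ need not attain an interior maximum at $x_{0}$, nor even be bounded above by $s$, and the strong maximum principle for the difference of two solutions of \eqref{eq:meancurvature} cannot be applied as you describe. To salvage this route you would have to replace $u_{\Sigma}-u_{S_{H}}$ by an isometry-invariant separation (e.g.\ the Lorentzian normal distance between the two surfaces) and redo the PDE comparison for that quantity, which is a nontrivial additional argument. You are also pushed into the equality case of the maximum principle precisely because you compare two surfaces with the \emph{same} mean curvature $H$.

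The paper sidesteps both difficulties by comparing $S$ not with the single leaf $S_{H}$ but with the whole foliation: it restricts the CMC time function $F$ (whose level sets are the leaves) to $S$ and sets $h^{\pm}=\sup/\inf F|_{S}$. At a point where $F|_{S}$ is extremal (or at the renormalized limit $S_{\infty}$, where $F_{\infty}|_{S_{\infty}}$ attains its extremum at $x_{0}$), the surface $S$ is tangent to a leaf of mean curvature $h^{+}$ (resp.\ $h^{-}$) and lies on one side of it, so the \emph{weak} comparison of mean curvatures at a tangency (Lemma \ref{lm:maxprinciple}) gives $h^{+}\leq H\leq h^{-}$ directly --- no strong maximum principle, no equality case, and the quantities compared (mean curvatures and the ordering of tangent surfaces) are isometry-invariant, so they pass to the limit under the $\phi_{n}$ without the coordinate problem above. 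I would rework your argument along these lines.
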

\begin{proof} The existence is provided by Theorem \ref{thm:existencefoliation}. In particular, we can define a time function $F: D(\Gamma) \rightarrow \R$ with the property that the level sets $F^{-1}(H)$ are $H$-surfaces for each $H \in \R$. \\
\indent To prove the uniqueness of this foliation, we are going to show that every other surface $S$ with constant mean curvature $H$, with bounded second fundamental form and with asymptotic boundary $\Gamma$ must coincide with the level set $F^{-1}(H)$. \\
Consider the restriction of $F$ to $S$. Suppose that $F$ admits maximum $h_{max}$ at a point $x \in S$ and minimum $h_{min}$ at a point $y \in S$. Notice that $h_{max}<+\infty$ and $h_{min}>-\infty$, otherwise $S$ would touch the domain of dependence. By construction, the surfaces $F^{-1}(h_{max})=S_{max}$ and $F^{-1}(h_{min})=S_{min}$ are tangent to $S$ at $x$ and $y$, respectively. Moreover, $S_{max}$ is in the future of $S$ and $S_{min}$ is in the past of $S$. Hence, by the Maximum Principle we obtain that
\[
	h_{max}\leq H \leq h_{min} \ .
\]
Therefore, $h_{min}=h_{max}=H$ and $S$ coincides with a level set of the function $F$ as claimed. \\
\indent In the general case, $F$ does not admit maximum and minimum on $S$, but we can still apply a similar reasoning "at infinity". We define
\[
	h^{+}=\sup_{x \in S}F(x)  \ \ \ \ \text{and} \ \ \ \ h^{-}=\inf_{ x \in S}F(x) \ .
\]
Fix a point $x_{0}$ in $AdS_{3}$ and a future-directed unit normal vector $n_{0}$ at $x_{0}$. Let $x_{n}$ be a sequence of points in $S$ such that $F(x_{n})$ tends to $h^{+}$ for $n \to \infty$. Let $\phi_{n}$ be a sequence of isometries of $AdS_{3}$ such that $\phi_{n}(x_{n})=x_{0}$ and the surface $\phi_{n}(S)$ is orthogonal to $n_{0}$ at $x_{0}$. By Lemma \ref{lm:stability}, the sequence of surfaces $S_{n}=\phi_{n}(S)$ converges $C^{\infty}$ to an $H$-surface $S_{\infty}$ in a neighborhood of $x_{0}$. Let $\Gamma_{\infty}$ be the boundary at infinity of $S_{\infty}$. Since $S_{n}$ converges uniformly on compact sets to $S_{\infty}$, the curve $\Gamma_{\infty}$ is the limit of the sequence $\phi_{n}(\Gamma)$. By Lemma \ref{lm:stabilityquasicircle} and Proposition \ref{prop:existenceconvexsurface}, the limit curve $\Gamma_{\infty}$ is a quasi-circle, hence Theorem \ref{thm:existencefoliation} provides a foliation of the domain of dependence of $\Gamma_{\infty}$ by constant mean curvature surfaces. Let $F_{\infty}: D(\Gamma_{\infty}) \rightarrow \R$ be the time function such that its level sets are leaves of the foliation.  By construction, the function $F_{\infty}$ restricted to $S_{\infty}$ admits maximum at $x_{0}$, so, by the Maximum Principle, $h^{+} \leq H$. Repeating a similar procedure for $h_{-}$, we obtain the inequalities  
\[
	h^{+} \leq H \leq h^{-}
\]
from which we deduce that $h^{+}=h^{-}=H$ and that $S$ must coincide with the level set $F^{-1}(H)$.
\end{proof}

\section{Application}\label{sec:application}
In this section we will use the theory described in Section \ref{sec:extensions} in order to associate to every constant mean curvature surface with asymptotic boundary the graph of a quasi-symmetric homeomorphism $\phi$ and with bounded second fundamental form a quasi-conformal extension of $\phi$. \\
\\
In Section \ref{sec:extensions}, we have recalled that, given a negatively curved space-like surface embedded in $AdS_{3}$, we can construct two local diffeomorphisms $\Pi_{l,r}:S \rightarrow \h^{2}$. 

\begin{prop}Let $S$ be an $H$-surface with bounded second fundamental form. Suppose that the boundary at infinity of $S$ is a quasi-circle $\Gamma$. Then the mappings $\Pi_{l,r}: S \rightarrow \h^{2}$ are global diffeomorphisms and they extend the maps $\pi_{l,r}:\Gamma \rightarrow S^{1}$
\end{prop}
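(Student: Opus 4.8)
The plan is to verify the hypotheses of Lemma~\ref{lm:criterioestensione} (together with Remark~\ref{oss:criterioestensione}) for our $H$-surface $S$. Recall that the lemma gives precisely the statement we want: if $S$ is space-like, negatively curved, its asymptotic boundary $\Gamma$ contains no light-like segment, and there is no sequence of tangent planes $P_n$ to $S$ converging to a light-like plane whose past and future end-points are not in $\Gamma$, then for every $x_n\in S$ with $x_n\to x\in\Gamma$ one has $\Pi_l(x_n)\to\pi_l(x)$ and $\Pi_r(x_n)\to\pi_r(x)$. This convergence, combined with the fact that $\Pi_{l,r}$ are local diffeomorphisms (which holds since $S$ is negatively curved, by the formulas $d\Pi_l=E+JB$, $d\Pi_r=E-JB$ recalled in Section~\ref{sec:extensions}), forces $\Pi_{l,r}$ to be proper, hence covering maps onto $\h^2$, hence global diffeomorphisms; and their boundary values are $\pi_{l,r}$, which is the second assertion.

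So the real content is to check the three geometric hypotheses. First, $S$ is negatively curved: this is exactly Proposition~\ref{prop:boundGausscurvature}, since $S$ is an $H$-surface with bounded second fundamental form and asymptotic boundary a quasi-circle. Second, $\Gamma$ is a quasi-circle, hence an acausal (not merely weakly acausal) curve, and in particular contains no light-like segment. Third, and this is the step I expect to be the main obstacle, one must rule out the degenerate behavior of tangent planes: no sequence $x_n\in S$ can have tangent planes $P_n$ converging to a light-like plane $P$ whose two end-points avoid $\Gamma$. Here I would argue as in the proof of Proposition~\ref{prop:boundGausscurvature} and Theorem~\ref{thm:uniqueness}: apply isometries $\phi_n$ of $AdS_3$ sending $x_n$ to a fixed $x_0$ and the normal of $S$ at $x_n$ to a fixed future-directed $n_0$, so that $\phi_n(S)$ has tangent plane at $x_0$ orthogonal to $n_0$ for all $n$. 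By Lemma~\ref{lm:stability} (using the uniform bound on the second fundamental form) a subsequence of $\phi_n(S)$ converges $C^\infty$ on compact sets to an $H$-surface $S_\infty$ through $x_0$, and the curves $\phi_n(\Gamma)$ converge in the Hausdorff topology to the asymptotic boundary $\Gamma_\infty$ of $S_\infty$. By Proposition~\ref{prop:existenceconvexsurface}, $S$ admits a convex equidistant surface, so the hypothesis of Lemma~\ref{lm:stabilityquasicircle} is met, and hence $\Gamma_\infty$ is a quasi-circle rather than the boundary of a light-like plane. This contradicts the assumption that the limit of the $P_n=\phi_n^{-1}(P_0)$ (equivalently, that $\Gamma_\infty$) is the boundary of a light-like plane, establishing the third hypothesis.

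Two routine points remain to be spelled out. One must translate "the tangent planes $P_n$ converge to a light-like plane $P$ with end-points not on $\Gamma$" into the framework above: after the normalization $\phi_n$, the tangent plane at $x_0$ is fixed and space-like, so the degeneration can only occur at the asymptotic level, i.e. as a degeneration of $\phi_n(\Gamma)$ toward the boundary of a light-like plane, which is exactly what Lemma~\ref{lm:stabilityquasicircle} forbids. And one must recall the standard fact that a local diffeomorphism $\h^2\to\h^2$ which extends continuously to a homeomorphism of the boundary circle onto itself is proper, hence a finite-sheeted covering, hence (since $\h^2$ is simply connected) a global diffeomorphism; the extension to $\partial$ is $\pi_{l,r}$ by the convergence statement of Lemma~\ref{lm:criterioestensione}. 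Finally, $\Phi=\Pi_r\circ\Pi_l^{-1}$ is then a global diffeomorphism of $\h^2$ extending $\phi=\pi_r\circ\pi_l^{-1}$, which is the desired quasi-conformal extension (its quasi-conformality following from the bound $\lambda\in[0,\sqrt{H^2+1}-\epsilon]$ of Remark~\ref{oss:estimateprincipalcurvature}, as will be used in Proposition~\ref{prop:Hextension}).
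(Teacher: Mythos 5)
Your overall strategy --- verify the three hypotheses of Lemma~\ref{lm:criterioestensione} directly for the surface $S$ --- is not the route the paper takes, and the place you yourself flag as ``the main obstacle'' is exactly where your argument has a genuine gap. The problem is the sentence in which you pass from ``the tangent planes $P_n=\phi_n^{-1}(P_0)$ converge to a light-like plane'' to ``$\phi_n(\Gamma)$ degenerates toward the boundary of a light-like plane,'' which you present as an equivalence. These are two different degenerations: $\Gamma_\infty=\lim\phi_n(\Gamma)$ is the limit of the \emph{pushed-forward} boundary curve, while $P=\lim\phi_n^{-1}(P_0)$ is the limit of the \emph{pulled-back} tangent plane, and a divergent sequence of isometries can perfectly well keep $\phi_n(\Gamma)$ a non-degenerate quasi-circle while $\phi_n^{-1}(P_0)$ degenerates. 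Lemma~\ref{lm:stabilityquasicircle} controls only the former; its proof relies on the tangent planes of the \emph{convex} equidistant surface being support planes, hence disjoint from $\Gamma$ at infinity. For the possibly non-convex $H$-surface $S$ the tangent planes $P_n$ need not be disjoint from $\Gamma$, so that mechanism gives you nothing about them, and the third hypothesis of Lemma~\ref{lm:criterioestensione} is left unverified. (One can imagine repairing this --- e.g.\ by a duality argument showing that the dual points $P_n^{*}$ eventually lie in $D(\Gamma)$, so that any limit of them lies on $\Gamma$ rather than at an endpoint of a light-like plane avoiding $\Gamma$ --- but that argument is not in your write-up and is not trivial.)

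The paper avoids this difficulty entirely. It applies Lemma~\ref{lm:criterioestensione} not to $S$ but to the convex equidistant surface $S_{\epsilon}$ furnished by Proposition~\ref{prop:existenceconvexsurface}, for which the hypotheses hold automatically by Remark~\ref{oss:criterioestensione}; this makes $\Pi_{l,\epsilon}$ a global diffeomorphism extending $\pi_l$. It then proves the exact identity $\Pi_{l}=\Pi_{l,\epsilon}\circ\eta$, where $\eta:S\to S_{\epsilon}$ is the normal flow, by an explicit computation with the parametrization of the left ruling of the boundary quadric (the point being that following the left ruling from $\partial_\infty P_n$ and from $\partial_\infty Q_n$, the tangent planes at corresponding points of $S$ and $S_\epsilon$, lands at the same point of $\partial_\infty P_0$); since $\eta$ extends to the identity on $\Gamma$, the conclusion for $\Pi_l$ follows. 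So the convex surface is used to make the boundary lemma applicable, not merely to feed Lemma~\ref{lm:stabilityquasicircle}. You should either adopt this reduction or supply an actual proof that the tangent planes of $S$ itself cannot degenerate to a light-like plane with endpoints off $\Gamma$. The remaining parts of your proposal (negative curvature via Proposition~\ref{prop:boundGausscurvature}, acausality of $\Gamma$, and the properness/covering argument upgrading a local diffeomorphism with the correct boundary behavior to a global one) are fine.
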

\begin{proof} We will do the proof for the map $\Pi_{l}$, the other case being analogous. By Proposition \ref{prop:existenceconvexsurface}, there exists a convex surface $S_{\epsilon}$ equidistant from $S$. We remark that $S$ and $S_{\epsilon}$ has the same boundary at infinity. Let $\Pi_{l,\epsilon}$ be the map defined in Section \ref{sec:extensions} associated to the surface $S_{\epsilon}$. By Lemma \ref{lm:criterioestensione} and Remark \ref{oss:criterioestensione}, the map $\Pi_{l,\epsilon}$ is a global diffeomorphism and extends $\pi_{l}$. \\
\indent We claim now that, if we denote with $\eta: S \rightarrow S_{\epsilon}$ the diffeomorphism induced by the normal flow, we have $\Pi_{l}=\Pi_{l,\epsilon}\circ \eta$. This is sufficient to conclude the proof, as the map $\eta$ can be extended to the identity on the asymptotic boundaries. We now prove the claim. Let $p \in S$. Up to isometry we can suppose that, in the affine chart $U_{3}=\{x_{3}\neq 0\}$, we have $p=(0,0,0)$ and the tangent plane to $S$ at $p$ is the space-like plane $P$ of equation $x=0$. In addition, we can suppose that the totally-geodesic space-like plane we fix in the definition of $\Pi_{l}$ (see Section \ref{sec:extensions}) is exactly $P$. With this assumption, we clearly have $\Pi_{l}(p)=p$. Moreover, $\eta(p)=(\epsilon, 0,0)$ and the tangent plane to $S_{\epsilon}$ at $\eta(p)$ has equation $x=\epsilon$. On the other hand, since the left foliation is parametrised by
\[
	(x, \cos(\theta)-x\sin(\theta), \sin(\theta)+x\cos(\theta))=(x, \sqrt{1+x^{2}}\cos(\theta+\alpha), \sqrt{1+x^{2}}\sin(\theta+\alpha)) \ , 
\]
where $\tan\alpha=x$, we have that 
\[
 \Pi_{l, \epsilon}(\epsilon, t\sqrt{1+\epsilon^{2}}\cos(\theta), t\sqrt{1+\epsilon^{2}}\sin(\theta))=\frac{1}{\sqrt{1+\epsilon^{2}}}(0,t\cos(\theta-\beta), t\sin(\theta-\beta)) \ ,
\] 
where $\tan\beta=\epsilon$. Therefore, $\Pi_{l, \epsilon}(\eta(p))=\Pi_{l, \epsilon}(\epsilon, 0,0)=(0,0,0)=p=\Pi_{l}(p)$, as claimed. 
\end{proof}

As a consequence the map $\Phi=\Pi_{r}\circ \Pi_{l}^{-1}$ is a global diffeomorphism and extends the quasi-symmetric map $\phi$, whose graph is the quasi-circle $\Gamma$. We now use the estimates on the principal curvatures proved in Section \ref{sec:principalcurvatures} to show that $\Phi$ is quasi-conformal. 

\begin{prop}\label{prop:Hextension} Let $S$ be an $H$-surface with bounded second fundamental form whose boundary at infinity is the graph of a quasi-symmetric homeomorphism $\phi$. Then the associated map $\Phi$ is a quasi-conformal extension of $\phi$.
\end{prop}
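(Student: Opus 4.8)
The plan is to control the Beltrami coefficient of $\Phi = \Pi_r \circ \Pi_l^{-1}$ using the explicit formulas $d\Pi_l = E + JB$ and $d\Pi_r = E - JB$ recalled in Section~\ref{sec:extensions}, where $B$ is the shape operator of $S$ and $J$ is the complex structure of the induced metric. Since $\Phi$ is already known to be a global diffeomorphism extending $\phi$ (by the previous proposition), it suffices to show that the complex dilatation $\mu_\Phi = \bar\partial \Phi / \partial\Phi$ has sup-norm bounded away from $1$; by the characterization of quasi-conformal maps this gives a $K$-quasi-conformal extension with $K = (1+\|\mu_\Phi\|_\infty)/(1-\|\mu_\Phi\|_\infty) < \infty$, and then Ahlfors' theorem (or directly the boundary values already established) identifies the boundary map with $\phi$.

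First I would work in a local orthonormal frame diagonalizing the traceless part $B_0 = B - HE$, so that the principal curvatures are $\mu = \lambda + H$ and $\mu' = -\lambda + H$ with $\lambda \geq 0$. In such a frame $d\Pi_l$ and $d\Pi_r$ become explicit $2\times 2$ matrices (since $J$ is rotation by $\pi/2$ in the orthonormal frame), and one computes $d\Phi = d\Pi_r \circ (d\Pi_l)^{-1}$ directly. The key point is that by Remark~\ref{oss:estimateprincipalcurvature}, which uses that $\Gamma$ is a quasi-circle together with bounded second fundamental form, one has the uniform bound $\lambda \in [0, \sqrt{H^2+1} - \epsilon]$ for some $\epsilon > 0$. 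Equivalently, the surface is \emph{uniformly} negatively curved: $\det(E + JB) = \det(E - JB) = 1 + \det B = -\K_S \geq \delta > 0$. This non-degeneracy, uniform over all of $S$, is exactly what prevents $d\Phi$ from degenerating and forces a uniform bound on the Beltrami coefficient. Concretely, writing $d\Pi_l$ in terms of $\lambda, H$ and the frame rotation, the dilatation of $\Phi$ at a point depends only on $\lambda/\sqrt{1+H^2}$ (or some similar ratio built from $\mu, \mu'$), which by the estimate stays in a compact subinterval of $[0,1)$; hence $\|\mu_\Phi\|_\infty < 1$.

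The main obstacle — really the only substantive one — is establishing that the a priori pinching $\lambda \leq \sqrt{H^2+1}-\epsilon$ of Remark~\ref{oss:estimateprincipalcurvature} translates into a uniform sub-$1$ bound on the Beltrami coefficient, i.e. keeping careful track of how $J$, which is defined using the \emph{induced} metric on $S$ (not a fixed background metric), interacts with $B$ in the quotient $d\Pi_r (d\Pi_l)^{-1}$. Once the local matrix computation is set up in the orthonormal frame that diagonalizes $B_0$, this reduces to the elementary observation that the map $\lambda \mapsto$ (dilatation of $\Phi$) is continuous and equals $1$ only in the limit $\lambda \to \sqrt{H^2+1}$, so the uniform gap $\epsilon$ yields a uniform constant $k = k(H,\epsilon) < 1$. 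The remaining verifications — absolute continuity on lines, which follows from $\Phi$ being a diffeomorphism, and the boundary identification with $\phi$, already proved — are routine, and I would dispatch them in a sentence each.
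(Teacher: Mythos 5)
Your proposal is correct and follows essentially the same route as the paper: diagonalize $B$ in an orthonormal frame, use $d\Pi_{l,r}=E\pm JB$, and invoke the uniform pinching $\lambda\in[0,\sqrt{1+H^{2}}-\epsilon]$ of Remark \ref{oss:estimateprincipalcurvature} to bound the complex dilatation strictly away from $1$. The only cosmetic difference is that the paper bounds the dilatation of $\Pi_{l}$ alone, viewed as a map from $(S,I)$ to $\h^{2}$ (obtaining $|\mu|^{2}=\lambda^{2}/(1+H^{2})<1$), rather than computing $d\Pi_{r}\circ(d\Pi_{l})^{-1}$ directly as you do.
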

\begin{proof}It is sufficient to prove that the map $\Pi_{l}: S \rightarrow \h^{2}$ is quasi-conformal from the induced metric on $S$ to the hyperbolic metric on $\h^{2}$. As seen in Section \ref{sec:extensions}, the differential of $\Pi_{l}$ is $E+JB$, where $J$ is the complex structure on $S$ and $B$ is its shape operator. We thus need to bound the module of the complex dilatation of the map $A=(E+JB)^{t}(E+JB)$. In a suitable orthogonal frame for $S$, we can suppose that $B$ is diagonal, hence
\[
	E+JB=\begin{vmatrix}
	1 & \lambda-H\\
	\lambda+H & 1\\ 
\end{vmatrix} \ ,
\]
where we wrote the principal curvatures of $S$ as $\pm\lambda+H$ with $\lambda \in [0, \sqrt{1+H^{2}}-\epsilon]$.
Therefore,
\[
	A=\begin{vmatrix}
	1+(H-\lambda)^{2} & 2\lambda\\
	2\lambda & (\lambda+H)^{2}+1\\
\end{vmatrix}
\]
and its complex dilatation is
\[
	\mu=-\frac{\lambda(H+i)}{1+H^{2}} \ .
\]
Thus,
\[
	|\mu|^{2}=\frac{\lambda^{2}}{1+H^{2}}=1-\frac{1+H^{2}-\lambda^{2}}{1+H^{2}}<1
\]
as wanted. 
\end{proof}

It turns out that the quasi-conformal homeomorphism $\Phi$ is a landslide. We recall that an area-preserving homeomorphism $\Phi:\h^{2}\rightarrow \h^{2}$ is a $\theta$-landslide if it can be decomposed as
\[
	\Phi=f_{2}\circ f_{1}^{-1}
\]
where $f_{i}:\h^{2}\rightarrow \h^{2}$ are harmonic maps whose Hopf differentials satisfy
\[
	\Hopf(f_{1})=e^{2i\theta}\Hopf(f_{2}) \ .
\]
\begin{prop}Let $S_{H} \subset AdS_{3}$ be a space-like $H$-surface bounding a quasi-circle at infinity. Then the map $\Pi_{r}\circ \Pi_{l}^{-1}$ is a $\theta$-landslide, where 
\[
	\theta=-\arctan(H)+\frac{\pi}{2} \ .
\]
\end{prop}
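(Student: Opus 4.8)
The plan is to show that the two maps $\Pi_l, \Pi_r : S_H \to \h^2$ are themselves harmonic (with respect to the induced metric on $S_H$ and the hyperbolic metric on $\h^2$) and to compute their Hopf differentials, so that the landslide structure follows from the definition with $f_1 = \Pi_l$, $f_2 = \Pi_r$ after identifying $S_H$ with $\h^2$ via a uniformization. The key input is the explicit form of the differentials recalled in Section \ref{sec:extensions}, namely $d\Pi_l = E + JB$ and $d\Pi_r = E - JB$, where $B$ is the shape operator of $S_H$ and $J$ is the complex structure of the induced metric. Writing $B = B_0 + HE$ with $B_0$ the traceless Codazzi part, one has $d\Pi_{l} = (1 + iH)E + JB_0$ and $d\Pi_{r} = (1 - iH)E - JB_0$ when everything is expressed in a conformal coordinate for the induced metric; the factor $JB_0$ is, up to a constant, the $(1,0)$-part carrying the Hopf differential, and the scalar factors $1 \pm iH$ are what produce the phase $e^{2i\theta}$.

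First I would fix a conformal coordinate $z$ for the induced metric $I$ on $S_H$, so that $I = e^{2\rho}|dz|^2$ and $J$ is multiplication by $i$. Since $B_0$ is traceless and symmetric for $I$, the quadratic form $I(B_0 \cdot, \cdot)$ is the real part of a holomorphic quadratic differential $q\,dz^2$ precisely because $B_0$ is Codazzi (this is the standard fact used already in Section \ref{sec:principalcurvatures} via Equation (\ref{eq:ODEprincipalcurvature})); in particular $q$ is the relevant second-order data. Next I would verify directly that $\Pi_l$ is harmonic: the harmonic map equation for a map into $\h^2$ with target metric $e^{2v}|dw|^2$ reads $\partial_{\bar z}\partial_z w + 2(\partial_w v)\,\partial_z w\,\partial_{\bar z}w = 0$, and one checks it is satisfied using the Gauss and Codazzi equations for $S_H$ — equivalently, one invokes that $\Pi_{l,r}$ extend to hyperbolic isometries on totally geodesic planes and that the harmonicity is a known feature of this construction (as in \cite{maxsurface}, \cite{SchKra}, \cite{BonSep} for the maximal and $\K$-surface cases). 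Then I would compute the Hopf differentials: $\Hopf(\Pi_l) = (\Pi_l^* g_{\h^2})^{(2,0)}$ and similarly for $\Pi_r$. Using $d\Pi_l = E + JB$, a direct computation of the $(2,0)$-part of the pullback metric — of the kind already carried out in the proof of Proposition \ref{prop:Hextension} for the complex dilatation — gives $\Hopf(\Pi_l)$ proportional to $(1 - iH)\,q\,dz^2$ and $\Hopf(\Pi_r)$ proportional to $-(1 + iH)\,q\,dz^2$ (or their conjugates, depending on orientation conventions), with the same real positive proportionality constant.

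Finally I would read off $\theta$. From $\Hopf(\Pi_l) = c\,(1 - iH)\,q\,dz^2$ and $\Hopf(\Pi_r) = -c\,(1 + iH)\,q\,dz^2$ we get
\[
\frac{\Hopf(\Pi_l)}{\Hopf(\Pi_r)} = -\frac{1 - iH}{1 + iH} = e^{i\pi}\cdot e^{-2i\arctan H} = e^{2i(\pi/2 - \arctan H)},
\]
so that $\Hopf(\Pi_l) = e^{2i\theta}\Hopf(\Pi_r)$ with $\theta = -\arctan(H) + \tfrac{\pi}{2}$, which is exactly the claimed value; that $\Phi = \Pi_r \circ \Pi_l^{-1}$ is area-preserving follows because both $\Pi_l$ and $\Pi_r$ distort the area element of $I$ by the same factor $\det(E \pm JB) = 1 + \det B = -\K_{S_H}$ (Gauss equation), so the Jacobian of $\Phi$ is identically $1$. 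The main obstacle is the verification of harmonicity of $\Pi_{l,r}$: while it is morally known from the cited works, making it precise here requires either a clean invariant argument (e.g. that the pair $(\Pi_l, \Pi_r)$ is the developing data of the $\mathbb{R}$-Fuchsian–type structure associated to $S_H$) or a coordinate computation balancing the Gauss–Codazzi equations against the harmonic map equation, and one must be careful that $S_H$ is only complete with bounded — not necessarily periodic — geometry, so the identification of $q\,dz^2$ as a genuine holomorphic quadratic differential on the uniformizing disc, and the passage from $S_H$ to $\h^2$, should be justified using Proposition \ref{prop:boundGausscurvature} and Remark \ref{oss:estimateprincipalcurvature} to control the conformal type.
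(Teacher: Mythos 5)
Your proposal is correct and follows essentially the same route as the paper: harmonicity of $\Pi_{l}$ and $\Pi_{r}$ via the decomposition $B=B_{0}+HE$, identification of the Hopf differentials as multiples of $(H\pm i)$ times the holomorphic quadratic differential attached to the Codazzi tensor $B_{0}$, and the same computation of the phase $e^{2i\theta}$. The harmonicity step you flag as the main obstacle is resolved in the paper exactly along the lines you sketch: one computes the traceless part of $(E\mp JB)^{*}(E\mp JB)$ to be $2(HE\mp J)B_{0}$, which is Codazzi because $H$ is constant, $B$ is Codazzi and $J$ is parallel, so the $(2,0)$-part of the pullback metric is holomorphic and no separate verification of the harmonic map PDE is needed.
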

\begin{proof} Since, by definition, $\Phi=\Pi_{r}\circ \Pi_{l}^{-1}$, it is sufficient to prove that $\Pi_{l,r}:(S_{H},I) \rightarrow \h^{2}$ are harmonic with 
\[
	\Hopf(\Pi_{l})=e^{2i\theta}\Hopf(\Pi_{r}) \ .
\]
In order to prove that $\Pi_{r}$ is harmonic, it is sufficient to prove that if we write
\[
	\Pi_{r}^{*}g_{\h^{2}}=I(\cdot, b\cdot) \ ,
\]
then the traceless part of $b$ is Codazzi for $I$. By definition of the map $\Pi_{r}$, we know that
\[
	\Pi_{r}^{*}g_{\h^{2}}=I((E-JB)\cdot, (E-JB)\cdot)=I(\cdot, (E-JB)^{*}(E-JB))
\]
where $(E-JB)^{*}$ denotes the adjoint for the metric $I$. Since $B$ is $I$-self-adjoint and $J$ is skew-symmetric for $I$, we deduce that $(E-JB)^{*}=(E+JB)$, thus $b=(E+JB)(E-JB)$. Let us now decompose the operator $B$ as $B=B_{0}+HE$, where $B_{0}$ is traceless. Then
\begin{align*}
	(E+JB)(E-JB)&=E+BJ-JB+B^{2}\\
		&=E+B_{0}J+HJ-JB_{0}-HJ+B_{0}^{2}+H^{2}E+2HB_{0}\\
		&=(1+H^{2})E+2(HE-J)B_{0}+B_{0}^{2}\\
		&=(1+\lambda^{2}+H^{2})E+2(HE-J)B_{0}
\end{align*}
where, in the last passage, we have used the fact that $B_{0}^{2}=\lambda^{2}E$, $\lambda$ being the positive eigenvalue of $B_{0}$. We deduce that the traceless part $b_{0}$ of $b$ is given by $b_{0}=2(HE-J)B_{0}$, which is Codazzi because $H$ is constant, $B$ is Codazzi and $J$ is integrable and compatible with the metric $I$ (hence $d^{\nabla^{I}}J=0$). Therefore, $I(\cdot, b_{0}\cdot)$ is the real part of a holomorphic quadratic differential $\Phi_{r}$ and the metric $\Pi_{r}^{*}g_{\h^{2}}$ can be written as
\[
	\Pi_{r}^{*}g_{\h^{2}}=I(\cdot, b\cdot)=e_{r}I+\Phi_{r}+\overline{\Phi_{r}}
\]
where $e_{r}:S\rightarrow \R^{+}$ is the energy density of the map $\Pi_{r}$. Since $\Phi_{r}$ is holomorphic for the complex structure $J$, this shows that $\Pi_{r}$ is harmonic with Hopf differential 
\[
	\Hopf(\Pi_{r})=\Phi_{r}=I(HE-J)B_{0}+iIJ(HE-J)B_{0} \ .
\]
A similar computation for $\Pi_{l}$ shows that
\[
	\Hopf(\Pi_{l})=\Phi_{l}=I(HE+J)B_{0}+iIJ(HE+J)B_{0} \ .
\]
By using conformal coordinates, we deduce that
\[
	\frac{\Hopf(\Pi_{l})}{\Hopf(\Pi_{r})}=\frac{H+i}{H-i}=e^{2i\theta}
\]
with 
\[
	\theta=-\arctan(H)+\frac{\pi}{2}
\]
as claimed. 
\end{proof}

\bigskip

\noindent \footnotesize \textsc{DEPARTMENT OF MATHEMATICS, UNIVERSITY OF LUXEMBOURG}\\
\emph{E-mail address:}  \verb|andrea.tamburelli@uni.lu|

\end{document}